\newcommand{\pr}{\mathbb{P}}
\newcommand{\esp}{\mathbb{E}}
\newcommand{\Var}{\mathrm{Var}}
\newcommand{\Cov}{\mathrm{Cov}}
\newcommand{\argmax}{\mathop{\textrm{argmax}}}
\newcommand{\I}{\mathcal I}
\newcommand{\Ik}{{\mathcal I}_k}
\newcommand{\Q}{\mathcal Q}
\newcommand{\bX}{\mathbb{X}}
\newcommand{\ui}{\underline {\mathfrak{i}}}
\newcommand{\uq}{\underline q}
\newcommand{\Rset}{\mathbb{R}}
\theoremstyle{plain} 
\newtheorem{theorem}{Theorem}
\newtheorem{example}{Example}
\newtheorem{lemma}{Lemma}
\newtheorem{hyp}{Assumption}
\begin{document}

\begin{frontmatter}

\title{New consistent and asymptotically normal estimators for random graph mixture models}
\runtitle{Estimation in random graph mixtures}
\thankstext{T1}{The authors have been supported  by the French Agence Nationale de la Recherche 
under grant NeMo ANR-08-BLAN-0304-01.}

\begin{aug}\author{Christophe Ambroise \ead[label=e1]{christophe.ambroise@genopole.cnrs.fr}}

\author{Catherine Matias\ead[label=e2]{christophe.ambroise,catherine.matias@genopole.cnrs.fr}}

\address{Universit\'e d'\'Evry val d'Essonne - CNRS UMR 8071\\Laboratoire Statistique et G\'enome\\523, place des Terrasses de l'Agora\\91 000 \'Evry, FRANCE\\ \printead{e2}}

\runauthor{C. Ambroise and C. Matias}

\end{aug}

\begin{abstract}
Random  graph mixture  models are now  very  popular for  modeling  real   data  networks. In these setups, parameter estimation   procedures usually  rely  on   variational   approximations,  either combined with the expectation-maximisation  (\textsc{em}) algorithm  or with   Bayesian  approaches. Despite  good results  on synthetic  data, the
  validity of the variational approximation is however not established. 
Moreover,  the behavior of  the maximum  likelihood or of the maximum a posteriori estimators approximated by these procedures is not known in these models, due to  the dependency structure on the variables. In this work, we show that in   many  different  affiliation  contexts
(for binary or weighted graphs), estimators based either on moment
equations  or on the maximization of some composite likelihood  are  strongly consistent  and $\sqrt{n}$-convergent,  where $n$ is the number of nodes. As a consequence, our result  establishes  that the overall  structure of  an affiliation model can be caught by the description of the network in terms of its number of triads (order 3 structures) and edges (order 2 structures).
 We illustrate the  efficiency of our method  on simulated  data  and compare  its  performances with  other existing procedures. A data set of cross-citations among economics journals is also analyzed. 
\end{abstract}

 \begin{keyword}
\kwd{composite likelihood}
 \kwd{random graph}
 \kwd{mixture model}
 \kwd{stochastic blockmodel}
 \end{keyword}

\end{frontmatter}

\maketitle

\section{Introduction}

The analysis of  network data appears in different  scientific fields, such
as social sciences, communication  networks and many others, including
a recent explosion  in the field of molecular  biology (with the study
of  metabolic   networks,  transcriptional  regulatory   networks  and
proteins interactions networks). The  literature is vast, and we
refer for instance to \cite{Boccaletti,Goldenberg} and the book by \cite{Kolaczyk} for interesting introductions to networks.

\cite{ER1} introduced one of the earliest and most
studied  random  graph  model,  in  which  binary  random  graphs  are
considered  as  a  set  of  independent  and  identically  distributed
(i.i.d.)  Bernoulli edge  variables over  a fixed  set of  nodes. This
model is however too homogeneous to capture some important features of
real networks, such as the presence of 'hubs', namely highly connected
nodes. This lack  of heterogeneity led to the  introduction of mixture
versions of  the simple  Erd\H{o}s-Rényi model.  So-called 'stochastic
blockmodels'  \citep{Daudin,FH82,Holland_etal_83,SN97}  were introduced
in  various forms, primarily  in social  sciences to  study relational
data. In  this context, the  nodes are partitioned into  latent groups
(blocks)  characterizing the  relations between  nodes. Blockmodeling
thus refers to the particular structure of the adjacency matrix of the
graph (\emph{i.e.} the matrix containing the edges indicators). By reordering
the  nodes with  respect to  the groups  they belong  to,  this matrix
exhibits  blocks.  Diagonal   and  off-diagonal  blocks  respectively
represent  intra-group and  inter-group connections.  In  case where
blocks  exhibit the  same  behaviour within  their  type (diagonal  or
off-diagonal),  we  further  obtain   what  we  call  an  affiliation
structure. Affiliation  structures are  parsimonious in the  number of
parameters they use  and may model a lot  of situations. For instance,
affiliation models encompass both community
structures and disassortative mixing \citep{Newman_Leicht}. In the first case (community
structure)    the   intra-group connectivities    are    high   while    the
inter-group connectivities are low. Disassortative mixing rather corresponds
to high inter-group connectivities and low intra-group connectivities.

Many  networks  are or  can  be  \emph{weighted}  (or in  other  words
\emph{valued}). Those  weights are precious  additional information on
the   graph   and   should    be   taken   into   account   in   their
analysis. Well-known examples of weighted networks include airline traffic
data between airports, co-authorship networks of scientists
\citep{PNAS_Barrat} or when rather considering the corresponding adjacency matrix, financial correlation matrices \citep{Laloux}. While
the two first examples correspond to sparse weighted networks, the last one
concerns dense (or complete) weighted graphs. 
Weighted  networks are  a way  of integrating  heterogeneous  data and
their analysis is thus of primary importance \citep{Newman_weighted}. Community detection (\emph{i.e.} the problem of finding clusters of nodes with many edges joining vertices of the same cluster and comparatively few edges joining vertices of different clusters) has been widely considered in the context of weighted graphs \citep[see for instance][]{Fortunato}. While community detection methods are mainly algorithmic, another approach is to rely on generative models and random graphs mixtures. Stochastic blockmodels for analyzing random graphs with non binary relations between nodes have been considered either in the case of a finite number of possible relations \citep{NS01} or for more general weighted graphs \citep{Maria_Robin_Vacher}. Our approach builds on these latter references.  We also point out the existence of 
\emph{generalized  blockmodels}  for  valued networks  \citep{Ziberna,
  Book_Blockmodel} which however do not rely on a probabilistic model as we shall do here.

In this  article, we  will be interested  in both binary  and weighted
random  graphs  and will  focus  on  mixture  models. We  mention  the
existence  of  an  increasing  literature  on  two  different  related
concepts: \emph{mixed membership} \citep{Airoldi,Erosheva} and \emph{overlapping} \citep{Latouche_overlap} stochastic blockmodels for binary networks, in which nodes may belong to several classes.  However these models are beyond the scope of the present work. \\

Current  estimation procedures in random graph mixture models  rely  on approximations  of the likelihood, which is itself intractable due to the presence of the non observed   groups.  Either   expectation   maximization  (\textsc{em})
algorithm \citep{DLR}, or Bayesian approaches  are at the core of these
strategies. Both  rely on the  computation of the distribution  of the
hidden nodes states, conditional on the observed edges variables. However, in the particular case of random graph
mixtures, the exact computation of this conditional distribution can
not  be obtained, due  to its  non-factorized form.  Thus, approximate
computations  are  made,  leading  to  what  is  called  'variational'
\textsc{em} or Bayes strategies \citep{Daudin,Latouche_Bayes_EM,Picard_BMC,2008_PR_Zanghi}.
The major drawback of these methods is their relatively large 
computational time. Besides, even  if these methods exhibit good behavior 
on   simulated    data,   they   suffer from a lack of theoretical support. Indeed,  two major features  of these procedures  still lack
understanding. First, the quality  of the variational approximation is
not known, and this approximation may even prevent convergence to local maxima of the
likelihood  \citep{cv_varEM}.  Second,  the  consistency of  the  maximum
likelihood or of the maximum a posteriori estimators is still an open question in these models. \\

Here, we propose simple strategies for estimating the parameters of 
mixture random graph models, in the particular affiliation case.  The methods not
only   rely  on   established  convergence   results,  but   are  also simpler than variational approaches. 
By focusing on small structures (edges and triads) and treating these as if
they were (but never assuming they are) independent, we prove that we may recover the
main features of an  affiliation model. We adopt strategies based on either solving  moment equations 
or maximizing  a composite marginal likelihood. 
A composite marginal likelihood consists in  the product of marginal distributions and may replace the likelihood in models with some dependency structure \cite[see for instance][]{Cox_Reid_04,Varin08}. In the weighted random graphs case, our result shows that parameters may be estimated relying on a composite likelihood of univariate marginals. This is not the case for binary random graphs, because parameters of mixtures of univariate Bernoulli distributions are not identifiable. However, parameters of mixtures of $3$-variate Bernoulli are identifiable \cite[see][Corollary 5]{ECJ}. Thus, in the binary random graph case, we develop moment or composite likelihood methods based on the marginals of triads, namely the $3$ random variables $(X_{ij},X_{ik},X_{jk})$ induced by a set of $3$ nodes $(i,j,k)$.   \\

Once the convergence of our estimators, let us say $\hat \theta_n$ to $\theta$, has been established, the next question of interest concerns the order at which the discrepancy $\hat \theta_n-\theta$ converges to zero. We establish asymptotic normality results, thus obtaining rates of convergence of our procedures. This is in sharp contrast  with existing methods and the  first insight on the difficult  issue  of exhibiting  (optimal)  rates  of convergence  for estimation procedures in these random graphs  models. Indeed, a still open  problem  may be stated as follows: what is the parametric rate of convergence when observing $n(n-1)/2$ (non independent) random variables over a set of $n$ nodes, distributed according to a random graph model? Is it $1/\sqrt n$ or $1/n$? In other words, the issue is whether the observation of these potentially $n(n-1)/2$ dependent edges variables over a set of $n$ nodes enables existence of estimation procedures with rates of convergence of the order $1/n$ or rather $1/\sqrt{n}$.  We obtain here theoretical results with rates of convergence of the order at least $1/\sqrt{n}$ (which might not be optimal). Moreover, in the \emph{degenerate} case where the group proportions are equal, the rates of convergence increase to $1/n$. Our simulations seem also to indicate rates of convergence faster than $1/\sqrt{n}$, that might be due to degeneracies in the limiting variances of our central limit results, \emph{i.e.} the fact that these variances might be zero.\\

The paper is organized as follows. 
In Section~\ref{sec:model_gen},  we state the different notations, present the general assumptions of our model as well as the main result:  a  law of  large  numbers and  a  central  limit theorem  for normalized  sums  of  functions  of variables over a $k$-tuple of nodes.  Section~\ref{sec:binary} focuses on binary random graphs: after introducing the specific model for binary variables, we present two different estimation procedures. The first one (Section~\ref{sec:bin_poly}) relies on moment equations and assumes that the group proportions $\boldsymbol \pi$ are known, while the second one (Section~\ref{sec:bin_multidim}) is more general and relies on composite likelihood. Section~\ref{sec:weighted} presents the weighted random graph model as well as the parameter estimation procedure, relying also on a composite likelihood approach. While a first part of our work focuses on theoretical results about consistency of the procedures, a second part is dedicated to algorithmic issues as well as experiments. In Section~\ref{sec:algos}, we present the implementation of the estimation procedures. A particular attention is paid to the problem of unraveling the latent structure of the model (Section~\ref{sec:latent}). In Section~\ref{sec:simus}, the performances of our procedures are illustrated on synthetic data and we also provide the analysis of a real data example. Finally, all the proofs are postponed to Section~\ref{sec:proofs}.

\section{Model and main result}
\label{sec:model_gen}
Let us first give some notations that will be useful throughout this article. 
 For any $Q\geq 1$, let $\mathcal{S}_Q$ denote the simplex $\{(\pi_1,\ldots,\pi_Q) ; \pi_i\ge 0 ; \sum_{i=1}^Q \pi_i =1\}$ and $\mathcal{V}_Q=\{(v_1,\ldots,v_Q), v_i\in \{0,1\}, \sum_{i=1}^Q v_i=1\}$.
For  the  sake  of  simplicity,  we only  consider  in  the  following
undirected graphs with no self-loops. Easy generalizations may be done
to handle directed graphs, with or without self-loops.  \\

In  this section, we  define a  general  mixture model  of random
graphs in the following way. 
First, let $\{Z_i\}_{1\le i\le n}$ be i.i.d. vectors $Z_i=(Z_{i1},\ldots ,
Z_{iQ})  \in  \mathcal{V}_Q$,  following  a  multinomial  distribution
$\mathcal{M}(1,\boldsymbol  \pi)$, where  $\boldsymbol  \pi =  (\pi_1,
\ldots,\pi_Q) \in  \mathcal{S}_Q$. Random variable  $Z_i$ indicates to
which group  (among $Q$ possibilities) node $i$  belongs. These random
variables are used to introduce heterogeneity in the random graph model.

Next, the observations $\{X_{ij}\}_{1\le i  <j\le n}$  are indexed by the
node pairs $\{i,j\}$ and take  values in a  general normed vector
space $\mathcal{X}$ (in the next sections,
$\mathcal{X}=\{0,1\}$  or $\mathbb{N}$  or  $\mathbb{R}^l$).  We  then
assume that conditional on the latent classes $\{Z_i\}_{1\le i\le n}$, the random variables
$\{X_{ij}\}_{1\le i <j\le n}$ are independent. Moreover, the conditional
distribution of $X_{ij}$ depends only on $Z_i, Z_j$ and has finite variance.
The model may thus be summarized in the following way
\begin{equation}
  \label{eq:model_gen}
\textbf{General model}  \left\{
    \begin{array}{cl}
\cdot  &\{Z_{i}\}_{1\le   i  \le  n}  \text{  i.i.d.    vectors  in  }
\mathcal{V}_Q , \text{ with distribution }\mathcal{M}(1,\boldsymbol \pi),\\
\cdot &\{X_{ij}\}_{1\le i <j\le n} \text{ observations in } \mathcal{X} , \\
 \cdot & \pr (\{X_{ij}\}_{1\le i <j\le n} | \{Z_{i}\}_{1\le i \le n})
 =\mathop{\otimes}_{1\le i <j\le n} \pr (X_{ij} | Z_{i}, Z_j ), \\
\cdot & \esp(\| X_{i,j} \|^2|Z_i, Z_j) <+ \infty .
    \end{array}
\right.
\end{equation}

It may be worth noting that the variables $\{X_{ij}\}_{1\le i <j\le n}$ are not independent in general, but we often make use of the fact that sets of non adjacent edges induce independent random variables. More precisely, if $I,J\subset \{1,\ldots,n\}$ with $I\cap J=\emptyset$, then $\{X_{ij}\}_{(i,j)\in I^2}$ and $ \{X_{ij}\}_{(i,j)\in J^2} $ are independent.

In the next sections, we will focus on the particular affiliation mixture model, where the conditional distribution of an edge variable $X_{ij}$ only depends on whether the  endpoints $i,j$ belong to the same group (\emph{i.e.} $Z_i=Z_j$). We shall thus refer to the assumption  
\begin{equation}
 \textbf{Affiliation structure}:   \pr (X_{ij} | Z_{i}, Z_j )=  \pr (X_{ij} | 1_{Z_{i} = Z_j} ), \label{eq:model_affil} 
\end{equation}
where $1_A$  is the indicator function of the set  $A$.

Moreover,  in  the particular  case  of  equal  group proportions  and
affiliation    structure,   we    shall   observe    some   degeneracy
phenomenas. These  are due to  the fact that the  distribution becomes
invariant under permutation of the  specific values of the node groups
(see Lemma~\ref{lem:degeneracy} in Section~\ref{sec:proofs} for 
more details). 
For later use, we thus also introduce the equal group proportions setting
\begin{equation}
\textbf{Equal group proportions case}: \pi_q=1/Q \text{ for any } q\in \{1,\ldots,Q\}. \label{eq:model_equalgroup}
\end{equation}

Let us now  motivate the following developments. Under the affiliation structure assumption, the distribution of a single edge follows a two-components mixture of the form 
\begin{equation*}
  X_{ij}\sim \gamma \pr(X_{ij}|Z_i=Z_j)+(1-\gamma) \pr(X_{ij}|Z_i\neq Z_j).
\end{equation*}
For weighted random graphs, we shall assume a parametric form for this absolutely continuous conditional distribution, namely $\pr(X_{ij}|Z_i=Z_j)=\pr_{\theta_{\text{in}}}(X_{ij})$ and $\pr(X_{ij}|Z_i\neq Z_j)=\pr_{\theta_{\text{out}}}(X_{ij})$. The vast majority of families of parametric absolutely continuous distributions give finite mixtures whose parameters are identifiable. This is equivalent to saying that $\esp[\log (
 \gamma \pr_{\theta_{\text{in}}}(X_{12})+(1-\gamma) \pr_{\theta_{\text{out}}}(X_{12}))]$ has a unique maximum at the true parameter value $(\theta_{\text{in}},\theta_{\text{out}})$. This reasoning is at the core of maximum likelihood estimation and motivates the introduction of a \emph{composite} log-likelihood
\begin{equation*}
 \mathcal{L}^{\text{compo}}_{X}(\theta)
= \sum_{1\le i<j\le  n }  \log (\gamma \pr_{ \theta_{\text{in}}} (X_{ij})  +(1-\gamma)  \pr_{\theta_{\text{out}}}(X_{ij} ) ),
\end{equation*}
which is not the model likelihood as the  random variables $X_{ij}$ are not  independent.
Its  usefulness  to estimate  the  parameters  relies  on whether  the
renormalized criterion $ \mathcal{L}^{\text{compo}}_{X}(\theta)/$ $(n(n-1))$ converges to the expectation $\esp[\log (
 \gamma \pr_{\theta_{\text{in}}}(X_{12})+(1-\gamma) \pr_{\theta_{\text{out}}}(X_{12}))]$.
We shall prove below that the answer is yes and thus, maximizing $ \mathcal{L}^{\text{compo}}_{X}(\theta)$ with respect to $\theta$ is a good strategy.

In the binary random graph case however, the strategy has to be modified because each random variable $X_{ij}$ follows a mixture of univariate Bernoulli distributions whose parameters are not identifiable. We thus rather consider mixtures of $3$-variate Bernoulli distributions which appear to be sufficient to consistently estimate the parameters.\\

Thus, we are now  interested more generally in the  behavior of empirical
sums of functions of the random variables induced by a $k$-tuple of
nodes. These  empirical estimators are  at the core of  the estimation
procedures that we shall later consider. To this aim, let us introduce some more notations. 

Define the set of nodes $\I= \{1,\ldots, n\}$ and the set
of $k$ distinct nodes $\Ik=\{(i_1,\ldots,i_k) \in \I^k ; i_j\neq i_l \text{ for any } j \neq
l \}$. ($\Ik$ is also  the set of injective maps from $\{1,\ldots,k\}$
to $\I=\{ 1,\ldots, n\}$). 
 For any fixed integer $k\geq 1$, and
any $k$-tuple  of nodes $\ui=(i_1,\ldots,i_k)  \in \Ik$, we
let   $\bX^{\ui}=(X_{i_1i_2},\ldots   ,X_{i_1i_k},X_{i_2i_3},\ldots  ,
X_{i_{k-1}i_k})$ be the vector of $p=\binom k 2$ random variables induced
by the $k$-tuple of nodes $\ui$.
Moreover, for any $s\ge 1$ and any measurable function $g : \mathcal{X}^p \to \Rset^s$, we let 
\begin{equation*}
  \hat m_g   = \frac {(n-k)!} {n!}  \sum_{\ui \in \I^k} g(\bX^{\ui})
  \quad \text{and} \quad m_g=\esp(g(\bX^{(1,\ldots,k)})) .
\end{equation*}

The next theorem establishes a strong  law of large numbers as well as
asymptotic normality of the estimator $\hat m_g$. 
As the random variables  $\{X_{ij}\}$ are not independent, consistency
(as well as asymptotic normality) of this empirical estimator is not trivial and has to be established carefully.

\begin{theorem}\label{thm:main}
Under the assumptions of model \eqref{eq:model_gen}, 
for any $k,s\ge 1$ and  $p=\binom {k} {2}$ and any measurable function
$g:\mathcal{X}^p \to \mathbb{R}^s$ such that
$\mathbb{E}(\|g(\bX^{(1,\ldots,k)})\|^2)$ is finite,  the estimator $\hat m_g$ is consistent 
\begin{equation*}
 \hat m_{g} \mathop{\to}_{n\to\infty}  m_{g} \text{ almost surely}, 
\end{equation*}
as well as asymptotically normal 
$  \sqrt{n}(\hat m_{g}-m_{g})\leadsto_{n\to\infty} \mathcal{N}(0,\Sigma_g)$.
If we moreover assume an affiliation structure \eqref{eq:model_affil} with equal group proportions \eqref{eq:model_equalgroup}, then $\Sigma_g=0$ and $n(\hat m_{g}-m_{g})$ converges in distribution as $n$ tends to infinity.
\end{theorem}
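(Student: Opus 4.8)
The plan is to treat $\hat m_g$ as a (vector-valued) $U$-statistic of degree $k$ built from the i.i.d.\ latent vectors $\{Z_i\}$ and the conditionally independent edges, and then exploit the classical Hoeffding decomposition, conditioning on the $Z_i$'s. First I would write $\hat m_g = \binom{n}{k}^{-1}\sum_{|\ui|=k} h(\bX^{\ui})$ (after symmetrizing $g$ over the $k!$ orderings of a $k$-subset, which does not change the estimator or its limit), and observe that the dependence between the summands comes only through shared nodes. For the strong law of large numbers, the key step is to verify that the collection $\{g(\bX^{\ui})\}$ is, in the terminology of $U$-statistics with a row-exchangeable/dissociated array structure, amenable to an ergodic/martingale argument: condition on $\mathcal{Z}=\sigma(Z_1,Z_2,\ldots)$, note that $\hat m_g - \esp(\hat m_g\mid\mathcal{Z})$ is an average of terms that are conditionally independent whenever the index $k$-tuples are node-disjoint, so a blocking argument (partition $\I^k$ into $O(1)$ families of pairwise node-disjoint tuples, up to the diagonal terms which are negligible) plus a conditional Borel--Cantelli / fourth-moment bound gives $\hat m_g - \esp(\hat m_g\mid\mathcal{Z}) \to 0$ a.s.; and $\esp(\hat m_g\mid\mathcal{Z})$ is itself an average over $k$-tuples of a bounded-variance function of i.i.d.\ variables $Z_{i_1},\ldots,Z_{i_k}$, hence a genuine $U$-statistic in the $Z$'s, which converges a.s.\ to $m_g$ by the classical SLLN for $U$-statistics (Hoeffding). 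The $L^2$ hypothesis on $g$ is exactly what is needed to make these moment bounds finite.

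For asymptotic normality, the plan is the standard Hoeffding projection. Write $\hat m_g - m_g = \sum_{c=1}^{k} \binom{k}{c}\binom{n}{c}^{-1} \sum_{|S|=c} g_c(S)$, where $g_c$ is the $c$-th order canonical (completely degenerate) kernel obtained from $g$. The first-order term, $\frac{k}{n}\sum_{i=1}^n g_1(i)$ with $g_1(i) = \esp(g(\bX^{(1,\ldots,k)})\mid \text{node } i) - m_g$, is a sum of i.i.d.\ mean-zero vectors and, after multiplication by $\sqrt n$, converges to $\mathcal N(0,\Sigma_g)$ with $\Sigma_g = k^2\,\Var(g_1(1))$ by the multivariate CLT. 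The remaining terms $c\ge 2$ are $O_{\pr}(n^{-c/2}\cdot n^{1/2}\cdot\ldots)$ — more precisely their $L^2$ norms are $O(n^{-c/2})$ — so $\sqrt n$ times them vanishes in probability; here one again uses that the kernels are built from conditionally independent edges over node-disjoint subsets so that the usual $U$-statistic variance estimates apply verbatim (the diagonal/overlap corrections are lower order). Combining with Slutsky gives $\sqrt n(\hat m_g - m_g)\leadsto\mathcal N(0,\Sigma_g)$.

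For the degenerate case, the point is that under the affiliation structure with $\pi_q\equiv 1/Q$, the first-order projection $g_1$ vanishes identically. Indeed, $g_1(i)$ depends on $Z_i$ only through the law of $\bX^{(1,\ldots,k)}$ conditional on $Z_1$ say, obtained by integrating out $Z_2,\ldots,Z_k$; but under equal proportions and affiliation, the conditional distribution of $(1_{Z_1=Z_2},1_{Z_1=Z_3},\ldots)$ given $Z_1$ does not depend on the value of $Z_1$ (each other node independently agrees with node $1$ with probability $1/Q$, regardless of which group node $1$ is in) — this is precisely the permutation-invariance recorded in Lemma~\ref{lem:degeneracy}. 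Hence $\esp(g(\bX^{(1,\ldots,k)})\mid Z_1) = m_g$ a.s., so $g_1\equiv 0$, $\Var(g_1(1))=0$ and $\Sigma_g=0$. Then the leading surviving term in the Hoeffding decomposition is the second-order one, $\binom{n}{2}^{-1}\sum_{i<j} g_2(i,j)$, whose natural normalization is $n$ rather than $\sqrt n$; multiplying by $n$ yields a degenerate second-order $U$-statistic, which converges in distribution to a (generally non-Gaussian) limit expressible as a weighted sum $\sum_\ell \lambda_\ell(\xi_\ell^2-1)$ of the eigen-decomposition of the kernel $g_2$, by the classical limit theory for degenerate $U$-statistics. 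Strictly, to make this last step rigorous one still has to control the extra randomness coming from the edges (not only the $Z$'s): one splits $n(\hat m_g - m_g)$ into $n(\esp(\hat m_g\mid\mathcal Z)-m_g)$, handled by the $U$-statistic theory in the $Z$'s, plus $n(\hat m_g - \esp(\hat m_g\mid\mathcal Z))$, which is shown to be $O_{\pr}(1)$ and in fact to converge as well by a conditional CLT given $\mathcal Z$ combined with the a.s.\ convergence of the conditional variance.

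The main obstacle, and the step deserving the most care, is the passage from the i.i.d.\ randomness in $\{Z_i\}$ to the full randomness in $\{X_{ij}\}$: the $X$'s are a \emph{dissociated} but not independent array, so one cannot invoke off-the-shelf $U$-statistic theorems directly. The right technical device is to systematically condition on $\mathcal Z$, prove the conditional statements (conditional SLLN, conditional CLT, and control of the conditional variances as functions of the $Z$'s), and then unconditional statements follow from dominated convergence and Slutsky; the non-adjacent-edges independence quoted after \eqref{eq:model_gen} is exactly what makes the conditional second-moment bookkeeping tractable. I expect the degeneracy argument ($g_1\equiv 0$) to be short once Lemma~\ref{lem:degeneracy} is in hand, but identifying the precise (non-Gaussian) limit law in the degenerate case requires the spectral representation of $g_2$ and is where most of the remaining work lies.
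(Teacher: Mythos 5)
Your proposal is correct and takes essentially the same route as the paper: the paper's decomposition into $T_1+T_2$ is exactly your split of $\hat m_g-m_g$ into the conditionally centered fluctuation $\hat m_g-\esp(\hat m_g\mid \{Z_i\}_i)$ and the genuine $U$-statistic $\esp(\hat m_g\mid \{Z_i\}_i)-m_g$ in the $Z_i$'s (whose expansion into products of centered terms $Z_{i_lq_l}-\pi_{q_l}$ is the Hoeffding decomposition you invoke), with the permutation invariance of Lemma~\ref{lem:degeneracy} annihilating the first-order projection under \eqref{eq:model_affil}--\eqref{eq:model_equalgroup} and the rate-$n$ limit coming from the degenerate second-order $Z$-term together with a conditional CLT for the edge fluctuations. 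The only slip is incidental: $\Ik$ cannot be partitioned into $O(1)$ families of pairwise node-disjoint tuples, but your alternative second-moment/Borel--Cantelli argument (the conditional variance of the centered part is $O(n^{-2})$, hence summable) renders that remark unnecessary.
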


Let us  now give  some comments about  the previous result.  First, an
expression for  the limiting distribution  $\Sigma_g$ is given  in the
proof of the theorem. Such  an expression is useful for instance
in the construction of confidence intervals. However, although our estimators
of the model parameters are  derived from estimators of the form $\hat
m_g$, we did not obtain here simple expressions for their limiting variance from an expression of $\Sigma_g$. Thus, rather than the exact form of the limiting distribution, we are more interested here in rates of convergence.

The  theorem states that the convergence of $\hat m_g$ to $m_g$ happens with a rate at least  $1/\sqrt{n}$. In the case where we consider an affiliation structure with equal group proportions, we prove that the limiting variance is null (\emph{i.e.} $\Sigma_g=0$), meaning that $\sqrt{n}(\hat m_{g}-m_{g})$ converges in probability to zero. We then further prove that the sequence $n(\hat m_g-m_g)$ converges in distribution (to some non Gaussian limit). Thus in this degenerate case, the convergence of $\hat m_g$ happens  at the faster rate $1/n$.

We shall see that consistency as well as rates of convergence are preserved in the estimation procedures that we deduce from moment estimators of the form $\hat m_g$. To our knowledge, this work is the first one giving some insights about consistency and rates of convergence of estimation procedures in random graphs mixtures models.\\

In  the next  sections, we  consider two  particular instances  of the
mixture  model defined in \eqref{eq:model_gen}:  the  binary  affiliation  model
(Section~\ref{sec:binary}) and the weighted affiliation model (Section~\ref{sec:weighted}).

\section{Binary affiliation model}\label{sec:binary}
In the case of binary random graphs, we observe binary random variables
$\{X_{ij}\}_{1\le  i <j\le  n}$ indicating  presence ($1$)  or absence
($0$)  of  an edge  between  nodes $i$  and  $j$.  The latent  classes
$\{Z_i\}_{1\le i\le  n}$ are  still distributed as  i.i.d. multinomial
vectors on $\mathcal{V}_Q$.
Conditional on these latent classes  $\{Z_i\}_{1\le i\le n}$, we assume that $\{X_{ij}\}_{1\le i <j\le n}$ are independent Bernoulli $\mathcal{B}(\cdot)$ random variables, with parameters  depending on the node groups. More precisely, we restrict our attention to the affiliation structure model \eqref{eq:model_affil}, where 
nodes connect differently whether they belong to the same group or not. 
We let 
\begin{equation}\label{eq:model_bin}
\forall q,\ell \in \{1,\ldots,Q\}, \quad  X_{ij} | Z_{iq}Z_{j\ell}=1 \sim 
\left\{
  \begin{array}{ll}
\mathcal{B}(\alpha) & \text{ if } q=\ell, \\
\mathcal{B}(\beta) & \text{ if } q\neq \ell .
  \end{array}
\right. 
\end{equation}
Here, $\alpha$  and $\beta$ respectively  are the intra-group  and the
inter-group connectivities and we let $p_{q\ell}=\alpha 1_{q=\ell}+\beta 1_{q\neq \ell}$, for any $1\le q,\ell\le Q$. In the following, we always assume $\alpha
\neq \beta$.  

The whole parameter space is given by  
\begin{equation*}
  \Pi  =\{ (\boldsymbol  \pi, \alpha,  \beta) ; 
  \boldsymbol \pi\in \mathcal{S}_Q\cap (0,1)^Q,  \alpha \in (0,1), \beta \in
  (0,1), \alpha \neq \beta \} .
\end{equation*}
We  will  use  the  notation  $b(x,p)=p^x  (1-p)^{1-x}$  (where  $x\in
\{0,1\}$ and $p\in  [0,1]$) for  Bernoulli density  with respect to
counting measure. 
Note that in this setup, the complete data log-likelihood simply writes
\begin{multline}\label{eq:loglik1}
  \mathcal{L}_{X,Z}(\boldsymbol \pi, \alpha, \beta) = \log \mathbb{P}_{\boldsymbol \pi, \alpha, \beta}(\{X_{ij}\}_{1\le i<j\le n}, \{Z_i\}_{1\leq i \le n}) 
= \sum_{i=1}^n \sum_{q=1}^{Q} Z_{iq} \log \pi_q \\
+ \sum_{1\le i<j \le n}\sum_{ q=1 }^Q  Z_{iq}Z_{jq} \big\{X_{ij} \log \alpha  +(1-X_{ij})\log(1-\alpha)\big\} \\
+ \sum_{1\le i<j \le n}\sum_{1\le q\neq \ell \le Q} Z_{iq}Z_{j\ell} \big\{X_{ij} \log \beta +(1-X_{ij})\log(1-\beta)\big\}.  
\end{multline}
Figure~\ref{fig:example} (left part) displays an example of a binary random graph
distributed according to this affiliation model.

\subsection{Moment estimators in the binary affiliation model with known group proportions}
\label{sec:bin_poly}
The  following  approach  based  on  moment  equations  was  initially
proposed  by  \cite{FH82}  to  estimate  the  connectivity  parameters
$\alpha$ and $\beta$ (as well as, in some cases, the number of groups $Q$). The core idea is simple: the moment equations corresponding to the distribution of a triplet $(X_{ij},X_{ik},X_{jk})$ give three equations which can be used to estimate the two parameters $\alpha$ and $\beta$, as soon as the group proportions (also appearing in these equations) are known. However, this method has not been thoroughly checked by Frank and Harary and may give rise to multiple solutions. Indeed, these authors never discuss uniqueness of the solutions to
the  system of  (non linear)  equations they  consider.  This
point  has  been  partly  discussed  in  \cite{ident_mixnet}  and  the
estimation procedures proposed here are an echo to the identifiability
results obtained there. 

The following method applies only when the mixture proportions $\boldsymbol{\pi}$ are known. We develop in Section~\ref{sec:algos}  an algorithmic procedure that iteratively estimates the group proportions in a first step, and the connectivity parameters $(\alpha,\beta)$ in a second step. This second step uses the method we shall now describe.\\

First, we let $s_2=\sum_q \pi_q^2$ and $s_3=\sum_q\pi_q^3$. Then, one easily gets the formulas 
\begin{equation}
  \begin{array}{ccl}
  m_1&:=&\mathbb{E}(X_{ij})= s_2\alpha+(1-s_2)\beta , \\
 m_2&:=& \mathbb{E}(X_{ij}X_{ik})= s_3\alpha^2+2(s_2-s_3)\alpha\beta+(1-2s_2+s_3)\beta^2 , \\
m_3 &:=&\mathbb{E}(X_{ij}X_{ik}X_{jk})= s_3\alpha^3+3(s_2-s_3)\alpha\beta^2+(1-3s_2+2s_3)\beta^3. 
  \end{array} \label{eq:moments}
\end{equation}
Since  any triplet $(X_{ij},X_{ik},X_{jk})$ takes finitely many states, its distribution is completely characterized by a finite number of its moments. In the binary  affiliation mixture model context, there are in fact only three different moments induced by a triplet distribution. Thus, the previous three moment equations completely characterize the distribution of any triplet $(X_{ij},X_{ik},X_{jk})$. Note that looking at higher order motifs, namely at the distribution of a set of $p=\binom k 2$ random variables over a set of $k$ nodes for $k\ge 4$ would provide more equations but would also lead to more intricate methods \cite[see for instance][]{ident_mixnet}.

In the article by \cite{ident_mixnet}, the possible solutions (with respect to $\alpha$ and $\beta$) of this set of moment equations are examined. Their result distinguishes the equal group proportions case ($\pi_q=1/Q, \forall 1\le q \le Q$) where a degeneracy phenomenon takes place.

\begin{theorem} \label{thm:moment}\citep{ident_mixnet}.
If  $m_2\ne m_1^2$, then the $\pi_q$'s are unequal  and
we can recover the parameters $\beta$ and $\alpha$ via the rational formulas
\begin{equation}
\beta                                    =\frac{(s_3-s_2s_3)m_1^3
  +(s_2^3-s_3)m_2m_1+(s_3s_2-s_2^3)m_3 }{(m_1^2-m_2)(2s_2^3-3s_3s_2+s_3)}
\quad \text{ and } \quad \alpha =\frac {m_1+(s_2-1)\beta}{s_2}.  \label{eq:bin_non_uniform_estim}
\end{equation}
If $m_2=m_1^2$, then the $\pi_q$'s are equal and we have 
\begin{equation}  \label{eq:bin_uniform_estim}
\beta =m_1+\left (  \frac {m_1^3-m_3}{Q-1} \right )^{1/3} \quad
\text{and} \quad 
\alpha =Q m_1+(1-Q)\beta.
\end{equation}
\end{theorem}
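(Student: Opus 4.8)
The plan is to pass to the parametrization $(\beta,\delta)$ with $\delta=\alpha-\beta\neq 0$ and to isolate $\delta$ from two well-chosen polynomial combinations of $m_1,m_2,m_3$. First I would substitute $\alpha=\beta+\delta$ into the moment formulas \eqref{eq:moments}; using $m_1=s_2\alpha+(1-s_2)\beta=\beta+s_2\delta$ and expanding in powers of $\beta$ and $\delta$, one gets after routine bookkeeping $m_2=\beta^2+2s_2\beta\delta+s_3\delta^2$ and $m_3=\beta^3+3s_2\beta^2\delta+3s_3\beta\delta^2+s_3\delta^3$, hence the two identities
\begin{equation*}
 m_1^2-m_2=(s_2^2-s_3)\,\delta^2,\qquad m_3+2m_1^3-3m_1m_2=(2s_2^3-3s_2s_3+s_3)\,\delta^3 .
\end{equation*}
These are pure polynomial identities and are essentially the only computation of any length.

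Next I would settle the dichotomy. By the Cauchy--Schwarz inequality, $s_2^2=\big(\sum_q\pi_q^{1/2}\pi_q^{3/2}\big)^2\le\big(\sum_q\pi_q\big)\big(\sum_q\pi_q^3\big)=s_3$, with equality iff $(\pi_q^{1/2})_q$ and $(\pi_q^{3/2})_q$ are proportional, i.e.\ (all $\pi_q$ being positive on $\Pi$) iff the $\pi_q$ are all equal. Since $\delta\neq 0$, the first identity above yields $m_2=m_1^2\iff s_2^2=s_3\iff\pi_q\equiv 1/Q$, which is exactly the (in)equality statements of the theorem.

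In the non-uniform branch ($m_2\neq m_1^2$, so $s_3-s_2^2>0$), the hard part will be checking that the factor $D:=2s_2^3-3s_2s_3+s_3$ never vanishes on $\Pi$, which is what makes the rational formula well defined. I would prove $D>0$ by splitting on $s_2$: if $s_2\le 1/3$ then $D\ge 2s_2^3>0$ directly, while if $s_2>1/3$ then from $s_3\le\|\pi\|_\infty\,s_2\le s_2^{3/2}$ one gets $D\ge s_2^{3/2}(2u^3-3u^2+1)=s_2^{3/2}(u-1)^2(2u+1)$ with $u=s_2^{1/2}\in(1/\sqrt3,1]$, which is strictly positive unless $\pi$ is a point mass (impossible on $\Pi$). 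Granting $D>0$, the denominator $(m_1^2-m_2)D$ is nonzero, so dividing the two identities gives $\delta=\dfrac{(m_3+2m_1^3-3m_1m_2)(s_3-s_2^2)}{(m_2-m_1^2)\,D}$; plugging this into $\beta=m_1-s_2\delta$, putting everything over the common denominator $(m_1^2-m_2)D$ and collecting powers of $m_1$ reproduces the first formula in \eqref{eq:bin_non_uniform_estim}, and $\alpha=\beta+\delta=(m_1+(s_2-1)\beta)/s_2$ follows from $m_1=s_2\alpha+(1-s_2)\beta$.

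Finally, in the uniform branch ($m_2=m_1^2$, hence $s_2=1/Q$, $s_3=1/Q^2$, $Q\ge 2$) the second identity collapses to $m_3-m_1^3=\tfrac{Q-1}{Q^3}\,\delta^3$, so $\delta=Q\big((m_3-m_1^3)/(Q-1)\big)^{1/3}$; combining with $m_1=\beta+\delta/Q$ gives $\beta=m_1+\big((m_1^3-m_3)/(Q-1)\big)^{1/3}$ and $\alpha=\beta+\delta=Qm_1+(1-Q)\beta$, i.e.\ \eqref{eq:bin_uniform_estim}. Since each step expresses the unknowns as explicit (rational, resp.\ real-algebraic) functions of $(m_1,m_2,m_3)$, uniqueness of the solution of the moment system is automatic; apart from the positivity of $D$, everything is the single use of Cauchy--Schwarz plus polynomial algebra.
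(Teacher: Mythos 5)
Your proposal is correct, and there is nothing in this paper to compare it against: Theorem~\ref{thm:moment} is quoted from \citep{ident_mixnet} without proof, so the paper itself contains no argument for this statement. Your derivation stands on its own. I checked the two central identities $m_1^2-m_2=(s_2^2-s_3)\delta^2$ and $m_3+2m_1^3-3m_1m_2=(2s_2^3-3s_2s_3+s_3)\delta^3$ by direct expansion in the $(\beta,\delta)$ parametrization and they hold; the Cauchy--Schwarz step correctly settles the dichotomy (since $\delta\neq 0$, $m_2=m_1^2$ iff $s_3=s_2^2$ iff the $\pi_q$ are equal); the positivity of $D=2s_2^3-3s_2s_3+s_3$ via the bounds $s_3\le \|\pi\|_\infty s_2\le s_2^{3/2}$ and the factorization $2u^3-3u^2+1=(u-1)^2(2u+1)$ is valid on $\Pi$ where $s_2<1$; and the recombination $\beta=m_1-s_2\delta$ does reproduce the stated rational formula (the coefficients $s_3-s_2s_3$, $s_2^3-s_3$, $s_3s_2-s_2^3$ come out exactly). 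The uniform branch, using $s_3=s_2^2=1/Q^2$ so that $m_3-m_1^3=\frac{Q-1}{Q^3}\delta^3$, is likewise correct. Since every unknown is expressed as an explicit function of $(m_1,m_2,m_3)$ with a provably nonvanishing denominator, uniqueness is indeed automatic, which is precisely the point the paper emphasizes was left undiscussed by Frank and Harary.
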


As soon as $s_2$ and $s_3$ are known, by plugging  estimators of the moments $m_i$ into these equations, we obtain 
 simple estimates for  parameters $\alpha$ and $\beta$. 
We thus first introduce  empirical moment estimators $\hat m_i$, defined by   
\begin{align}
\hat m_1 &=  \frac 1{n(n-1)} \sum_{(i,j)\in \I_2} X_{ij} ,\qquad
\hat m_2 = \frac 1{n(n-1)(n-2)} \sum_{ (i,j,k)\in \I_3 } X_{ij}X_{ik} ,\nonumber \\ 
 \hat m_3&=\frac 1 {n(n-1)(n-2)} \sum_{(i,j,k)\in \I_3} X_{ij}X_{ik}X_{jk} . \label{eq:estim_moment}
\end{align}
Note that those estimators are all of the form $\hat m_{g}$ for some specific function $g$. Thus, their consistency is a consequence of Theorem~\ref{thm:main}.
We are then able to prove the following result.

\begin{theorem}
\label{thm:bin_moment} 
In  the binary affiliation model specified by \eqref{eq:model_gen} and \eqref{eq:model_bin}, when the group proportions $\boldsymbol \pi$ are supposed to be known, we have the following results.
\begin{itemize}
\item [i)]
When the  $\pi_q$'s are unequal, the estimators $(\hat \alpha,\hat \beta)$ defined through \eqref{eq:bin_non_uniform_estim} where the $m_i$'s are replaced by the $\hat m_i$'s, converge almost surely to $(\alpha,\beta)$. Moreover, the rate of this convergence is at least $1/\sqrt{n}$.
\item [ii)]
When the  $\pi_q$'s are equal, the estimators $(\hat \alpha,\hat \beta)$ defined through \eqref{eq:bin_uniform_estim} where the $m_i$'s are replaced by the $\hat m_i$'s, converge almost surely to $(\alpha,\beta)$. Moreover, the rate of this convergence is at least $1/n$.
\end{itemize}
\end{theorem}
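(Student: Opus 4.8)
The plan is to deduce both parts from Theorem~\ref{thm:main}, applied to triples of nodes, combined with the continuous mapping theorem and the delta method; the only genuine work is to check that the maps defined by \eqref{eq:bin_non_uniform_estim} and \eqref{eq:bin_uniform_estim} are regular enough at the true moment vector $(m_1,m_2,m_3)$. First I would note that $\hat m_1,\hat m_2,\hat m_3$ are all of the form $\hat m_g$ for bounded, hence square-integrable, functions $g$: $\hat m_1=\hat m_g$ with $k=2$ and $g(x)=x$, while $\hat m_2$ and $\hat m_3$ correspond to $k=3$ with $g(x_{12},x_{13},x_{23})=x_{12}x_{13}$ and $g(x_{12},x_{13},x_{23})=x_{12}x_{13}x_{23}$ respectively. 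Moreover the whole triple $(\hat m_1,\hat m_2,\hat m_3)$ can be realised as a single $\mathbb{R}^3$-valued estimator $\hat m_g$ over triples of nodes, the first coordinate of $g$ being the symmetrised marginal $\tfrac{1}{3}(x_{12}+x_{13}+x_{23})$ (one checks that the resulting average over ordered triples collapses to $\hat m_1$). Theorem~\ref{thm:main} then yields at once that $(\hat m_1,\hat m_2,\hat m_3)\to(m_1,m_2,m_3)$ almost surely and that $\sqrt n\big((\hat m_1,\hat m_2,\hat m_3)-(m_1,m_2,m_3)\big)$ is asymptotically Gaussian; and, under the affiliation structure with equal group proportions, that the limiting covariance vanishes and $n\big((\hat m_1,\hat m_2,\hat m_3)-(m_1,m_2,m_3)\big)$ converges in distribution, so these empirical moments converge at rate $1/n$.

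For part i), we are in the regime $m_2\neq m_1^2$, and Theorem~\ref{thm:moment} guarantees that the rational map $\phi$ sending $(m_1,m_2,m_3)$ to $(\alpha,\beta)$ via \eqref{eq:bin_non_uniform_estim} is well defined on a neighbourhood of the true moment vector -- its denominators $m_1^2-m_2$ and $2s_2^3-3s_3s_2+s_3$ being non-zero there -- hence $C^\infty$ near that point. Almost sure convergence of $(\hat\alpha,\hat\beta)=\phi(\hat m_1,\hat m_2,\hat m_3)$ to $(\alpha,\beta)=\phi(m_1,m_2,m_3)$ then follows from the continuous mapping theorem, and the delta method applied to the central limit theorem just obtained gives $\sqrt n\big((\hat\alpha,\hat\beta)-(\alpha,\beta)\big)$ asymptotically normal, i.e.\ a rate of convergence at least $1/\sqrt n$.

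For part ii), the proportions are equal and we use \eqref{eq:bin_uniform_estim}, i.e.\ the map $\psi$ sending $(m_1,m_3)$ to $\beta=m_1+\big((m_1^3-m_3)/(Q-1)\big)^{1/3}$ and then to $\alpha=Qm_1+(1-Q)\beta$. Since the cube root is continuous on $\mathbb{R}$, $\psi$ is continuous and almost sure convergence of $(\hat\alpha,\hat\beta)$ again follows from the continuous mapping theorem. For the rate, the key observation is that at the true value the argument of the cube root does not vanish: with $s_2=\sum_q\pi_q^2=1/Q$, the identity $m_1=s_2\alpha+(1-s_2)\beta$ gives $\beta-m_1=(\beta-\alpha)/Q$, whence by \eqref{eq:bin_uniform_estim} $m_1^3-m_3=(Q-1)(\beta-\alpha)^3/Q^3$, which is non-zero because $\alpha\neq\beta$. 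Therefore $\psi$ is differentiable at $(m_1,m_3)$, and applying the delta method to the weak convergence of $n\big((\hat m_1,\hat m_3)-(m_1,m_3)\big)$ established above shows that $n\big((\hat\alpha,\hat\beta)-(\alpha,\beta)\big)$ converges in distribution; in particular the rate is at least $1/n$.

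The step I expect to require the most care is precisely this differentiability-at-the-truth check in part ii): the cube root is only Hölder of order $1/3$ at the origin, so the $1/n$ rate would degrade to $n^{-1/3}$ if $m_1^3-m_3$ were zero, and it is exactly the standing assumption $\alpha\neq\beta$ that excludes this. Its counterpart in part i) -- non-vanishing of the denominators in \eqref{eq:bin_non_uniform_estim} -- is easier and is already contained in the case distinction of Theorem~\ref{thm:moment}. Everything else is a routine assembly of Theorem~\ref{thm:main}, the continuous mapping theorem and the delta method; note also that for the rate claims alone (as opposed to asymptotic normality) one only needs $\phi$ and $\psi$ to be locally Lipschitz at the true moment vector.
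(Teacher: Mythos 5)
Your proposal is correct and follows essentially the same route as the paper: a joint central limit theorem for $(\hat m_1,\hat m_2,\hat m_3)$ derived from Theorem~\ref{thm:main}, followed by the continuous mapping theorem and the delta method. Your only addition is the explicit verification that $m_1^3-m_3=(Q-1)(\beta-\alpha)^3/Q^3\neq 0$ at the true parameter (so the cube root is differentiable there), a detail the paper leaves implicit when it simply asserts that the maps $\phi$ and $\psi$ are differentiable.
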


The performances of this method, combined with an iterative procedure to uncover the latent structure and estimate the group proportions are illustrated in Section~\ref{sec:simus}.

\subsection{$M$-estimators in the binary affiliation model} \label{sec:bin_multidim}
We shall now describe  another parameter estimation procedure based on
$M$-estimators  \cite[see for  instance][Chapter  5]{VDV}, \emph{i.e.}
estimators    maximizing   some    criterion   (here,    a   composite
likelihood). This  procedure is more direct than  the previous moments
method developed in Section~\ref{sec:bin_poly},  as it does not assume
a preliminary knowledge of the group proportions $\boldsymbol \pi$.

Let us recall that  $\bX^{(i,j,k)}= (X_{ij},X_{ik},X_{jk})$. The random  vectors $\bX^{(i,j,k)}$ form  a set
of  non   independent, but identically distributed  vectors,  with
distribution of each $\bX^{(i,j,k)}$ given by the mixture 
\begin{equation*}
\mathbb{P}_{\boldsymbol\pi, \alpha,\beta} (\bX^{(1,2,3)} )
           = \sum_{1\le q,\ell, m\le Q} {\pi}_q {\pi}_{\ell} {\pi}_m b\left(  X_{12} , p_{q\ell} \right)  b\left(  X_{13} , p_{qm} \right)  b\left(  X_{23} , p_{\ell m} \right),
\end{equation*}
where we recall that $p_{q\ell}=\alpha 1_{q=\ell}+\beta 1_{q\neq \ell}$.
In this mixture, many components are in fact equal. 
Indeed, the components reduce to only four (when $Q=2$) or five (when $Q\ge 3$) different distributions. More precisely, we may write 
\begin{multline}
\label{eq:multiBernAffil}
\mathbb{P}_{\boldsymbol\pi, \alpha, \beta} (\bX^{(1,2,3)} )= \gamma_1 b(  X_{12}, \alpha)  b( X_{13}, \alpha) b(  X_{23}, \alpha) \\
                        +  \gamma_2 b(  X_{12}, \beta)  b(  X_{13}, \beta) b(  X_{23}, \alpha) 
                         + \gamma_3   b(  X_{12}, \beta)  b(  X_{13}, \alpha) b(  X_{23},\beta) \\
                        + \gamma_4  b(  X_{12}, \alpha)  b(  X_{13}, \beta) b(  X_{23}, \beta)
 + \gamma_5  b(  X_{12}, \beta)  b(  X_{13}, \beta) b(  X_{23}, \beta) ,
\end{multline}
where the five proportions $\boldsymbol {\gamma}=(\gamma_1,\ldots, \gamma_5) \in \mathcal{S}_5$  appearing in this mixture are related to the original proportions $\boldsymbol \pi$, by the following relations 
\begin{equation}
  \label{eq:gamma}
\begin{cases}
  \gamma_1 = \sum_{ q=1}^Q \pi_q^3 =s_3 , \\
  \gamma_j = \sum_{1\le q\neq \ell \le Q} \pi_q^2\pi_{\ell} = s_2-s_3,  \mbox{ for }j\in \{2,3,4\} , \\
\gamma_5= \sum_{\substack{1\le q, \ell,m \le Q\\ |\{q,\ell , m \}|=3}} \pi_q\pi_{\ell}\pi_m = 1-3s_2+2s_3.
\end{cases}  
\end{equation}
Note that when $Q=2$, the fifth proportion $\gamma_5$ is automatically equal to zero. Moreover, as soon as $Q\le 3$, the set of equations \eqref{eq:gamma} defines a one-to-one relation between $\boldsymbol \pi$ and $\boldsymbol \gamma$. However, when $Q>3$, the parameter $\boldsymbol \pi$ is not uniquely defined from $\boldsymbol \gamma$ and  is not identifiable from the mixture distribution \eqref{eq:multiBernAffil}.

We  emphasize  that the  distribution  \eqref{eq:multiBernAffil} is  a
constrained $3$-variate  Bernoulli mixture. Parameters identifiability
of such  a distribution is  further discussed below. However  we shall
already remark that while parameters  of mixture models may in general
be  identified  only up  to  a permutation  on  the  node labels,  the
constrained  form  of the  mixture  \eqref{eq:multiBernAffil} has  the
following  consequence: the  parameters $\alpha$  and $\beta$  will be
exactly  recovered as soon  as the  mixture components  are identified
from  \eqref{eq:multiBernAffil} and  whatever the  labelling  of these
mixture components. Indeed, among the five unordered components of the
mixture,  only three  of them  will be  the product  of  two identical
one-dimensional  distributions, times a  different one.  The parameter
$\beta$ is then the parameter appearing in exactly two marginals in any of those three components.\\

Let us consider as our criterion a composite marginal log-likelihood 
 of the observations 
\begin{equation}\label{eq:pseudologlik1}
  \mathcal{L}^{\text{compo}}_{X}(\boldsymbol \pi, \alpha,\beta)
= \sum_{(i,j,k)\in \I_3}\log  \pr_{\boldsymbol \pi,\alpha,\beta} (\bX^{(i,j,k)}). 
\end{equation}
We stress that  this quantity is not derived from  the marginal of the
model  complete data likelihood  (expressed in  \eqref{eq:loglik1})  and is
simpler.  It would be the log-likelihood of the observations if the
triplets  $\{\bX^{(i,j,k)}\}_{(i,j,k)\in \I_3}$   were  independent,  which  is
obviously not the case. We now define our estimators as 
\begin{equation}
 (\hat {\boldsymbol \pi}_n , \hat \alpha_n,\hat \beta_n) =\argmax_{\boldsymbol \pi, \alpha, \beta}  \mathcal{L}^{\text{compo}}_{X}(\boldsymbol \pi, \alpha,\beta).
\label{eq:estim_alpha_beta}
\end{equation}
Note  that  according  to  the  non uniqueness  of  group  proportions
$\boldsymbol  \pi$ corresponding  to mixture  proportions $\boldsymbol
\gamma$, the  maximum with respect  to $\boldsymbol \pi$ in  the above
equation may not be unique.  We also let $\hat {\boldsymbol \gamma}_n$
be defined from $\hat {\boldsymbol \pi}_n$ through \eqref{eq:gamma}.

Using     Theorem~\ref{thm:main},    the     renormalized    criterion
\eqref{eq:pseudologlik1} converges  to a limit. The key  point here is
that under an identifiability assumption on the model parameters, this limit is a
function whose maximum is attained only at the true parameter value $(\boldsymbol
\gamma,  \alpha,\beta)$.  Using  classical  results from  $M$-estimators
\citep{VDV,Wald},  we  can  then  obtain  consistency  and  asymptotic
normality of the estimators defined through \eqref{eq:estim_alpha_beta}. 
We  thus  need here  to  assume  the  identifiability of  the  model 
parameters.

 \begin{hyp}
   \label{hyp:ident_binary}
The  parameters $\boldsymbol \gamma,\alpha, \beta$ of the model defined by \eqref{eq:multiBernAffil} are identifiable. 
In other words, if there exist $(\boldsymbol\pi, \alpha, \beta)$ and $ (\boldsymbol\pi ', \alpha', \beta') $ such that for any $(x,y,z)\in \{0,1\}^3$ we have 
  \begin{equation*}
\mathbb{P}_{\boldsymbol\pi, \alpha, \beta}  (X_{12}=x,X_{13}=y,X_{23}=z )= \mathbb{P}_{\boldsymbol\pi ', \alpha', \beta'} (X_{12}=x,X_{13}=y,X_{23}=z ) , 
  \end{equation*}
then $(\boldsymbol \gamma ,\alpha, \beta) = ( \boldsymbol \gamma',\alpha', \beta')$, where $\boldsymbol \gamma, \boldsymbol \gamma'$ are defined through \eqref{eq:gamma} as functions of $\boldsymbol \pi, \boldsymbol \pi'$, respectively. 
 \end{hyp}

Let us now give some comments on this assumption. We first mention that identifiability of all the parameters $(\boldsymbol \pi,\alpha,\beta)$ in the model defined by \eqref{eq:model_gen} and \eqref{eq:model_bin}, \emph{i.e.} relying on the full distribution over $\cup_{n\ge 1}\{0,1\}^{\binom n 2}$ (comprising the marginal distributions of the random graphs over a set of $n$ nodes, for any value of $n$), is a difficult issue, for which only partial results have been obtained in \cite{ident_mixnet}. Surprisingly, the results under the affiliation assumption are  more difficult to obtain than in the non affiliation case. 
The question here is slightly different and we ask whether a triplet distribution \eqref{eq:multiBernAffil} is sufficient to identify only $\alpha$ and $\beta$ (as well as the corresponding proportions $\boldsymbol \gamma$). As already pointed out, the distribution \eqref{eq:multiBernAffil} is a constrained distribution from the larger class of $3$-variate Bernoulli mixtures. 
In the case of (unconstrained) finite mixtures of multivariate (or $3$-variate) Bernoulli distributions, while the models have been used for decades and were strongly believed to be identifiable \citep{Carreira}, the rigorous corresponding result has been established  only very recently and using rather elaborate techniques \cite[see][Corollary 5]{ECJ}. Unfortunately, this latter result does not  apply directly  here. 
While this might be hard to establish,  we strongly believe that $\boldsymbol \gamma,\alpha, \beta$ are identifiable from the distribution \eqref{eq:multiBernAffil}  and  we advocate that from the simulations we performed, it seems a reasonable assumption to make. \\

In the following, we also restrict our  attention to compact parameter spaces,  as this greatly simplifies the proofs and is not much restrictive. Generalizations could be done at the cost of technicalities \cite[see for instance][Chapter 5]{VDV}.
\begin{hyp}
  \label{hyp:compact1}
Assume  that there  exists some  $\delta >0$  such that  the parameter
space is restricted to $\Pi_{\delta}= \{(\boldsymbol \pi,\alpha,\beta)
\in \Pi ; \forall 1\le q \le Q, \pi_q\ge
\delta, \alpha \in [\delta, 1-\delta] , \beta \in [\delta, 1-\delta] \}$. 
\end{hyp}

We are then able to prove the following result. 

\begin{theorem}\label{thm:multiv_bernoulli}
In the model defined by \eqref{eq:model_gen} and \eqref{eq:model_bin}, under  Assumptions~\ref{hyp:ident_binary} and \ref{hyp:compact1}, the estimators $(\hat {\boldsymbol \gamma}_n, \hat \alpha_n,\hat \beta_n)$ defined by \eqref{eq:estim_alpha_beta} are consistent,  as the sample size $n$ grows to infinity. 
Moreover, the rate of this convergence is at least $1/\sqrt{n}$ and increases to $1/n$ in the particular case of equal group proportions \eqref{eq:model_equalgroup}. 
\end{theorem}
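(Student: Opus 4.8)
The plan is to read off both consistency and the rate from the classical $M$-estimation machinery \citep{VDV,Wald}, feeding in Theorem~\ref{thm:main} as the only probabilistic input. The key preliminary observation is that the triplet density \eqref{eq:multiBernAffil} depends on $\boldsymbol\pi$ only through $\boldsymbol\gamma=\boldsymbol\gamma(\boldsymbol\pi)$ (see \eqref{eq:gamma}), so the composite log-likelihood \eqref{eq:pseudologlik1} is a genuine function of the identifiable parameter $\eta=(\boldsymbol\gamma,\alpha,\beta)$, which Assumption~\ref{hyp:compact1} confines to a compact set $H$. On $H$ every value $\pr_\eta(x,y,z)$, $(x,y,z)\in\{0,1\}^3$, is at least $\delta^3>0$, so $\eta\mapsto\log\pr_\eta(x,y,z)$ is bounded and Lipschitz on $H$, uniformly in $(x,y,z)$. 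Writing $M_n(\eta)=[n(n-1)(n-2)]^{-1}\mathcal L^{\text{compo}}_X(\eta)$ and $M(\eta)=\esp_{\eta_0}[\log\pr_\eta(\bX^{(1,2,3)})]$ with $\eta_0$ the true value, and using that the law of $\bX^{(1,2,3)}$ is exchangeable in its three coordinates, one gets the exact identity $M_n(\eta)=\sum_{t=0}^{3}\hat m_{g_t}\,\log\pr_\eta(t)$, where $g_t(\bX^{(1,2,3)})=\mathbf{1}\{X_{12}+X_{13}+X_{23}=t\}$ and $\pr_\eta(t)$ is the probability that a triplet carries $t$ edges. Hence $M_n$ depends on the data only through the four empirical moments $\hat m_{g_t}$.

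\emph{Consistency.} For fixed $\eta$, $M_n(\eta)$ is a fixed affine combination of the estimators $\hat m_{g_t}$, so $M_n(\eta)\to M(\eta)$ almost surely by the law-of-large-numbers part of Theorem~\ref{thm:main}; the uniform Lipschitz dependence on $\eta$ together with compactness of $H$ upgrades this to $\sup_{\eta\in H}|M_n(\eta)-M(\eta)|\to0$ a.s. Next, $M(\eta)-M(\eta_0)=-\mathrm{KL}(\pr_{\eta_0}\,\|\,\pr_\eta)\le0$ on $\{0,1\}^3$, with equality iff $\pr_\eta=\pr_{\eta_0}$, hence iff $\eta=\eta_0$ by Assumption~\ref{hyp:ident_binary}: thus $\eta_0$ is the unique, well-separated maximiser of the continuous map $M$. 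The standard $M$-estimator consistency argument then gives $\hat\eta_n=(\hat{\boldsymbol\gamma}_n,\hat\alpha_n,\hat\beta_n)\to\eta_0$ a.s.; note that $\hat{\boldsymbol\gamma}_n$ is well defined even for $Q>3$ precisely because $M_n$ sees $\boldsymbol\pi$ only through $\boldsymbol\gamma$.

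\emph{Rates of convergence.} Once $\hat\eta_n$ is eventually an interior point, it satisfies $\nabla M_n(\hat\eta_n)=0$, and a first-order Taylor expansion gives $0=\nabla M_n(\eta_0)+[\nabla^2M_n(\tilde\eta_n)](\hat\eta_n-\eta_0)$. The gradient $\nabla M_n(\eta_0)$ is again an estimator $\hat m_g$ with $g=\nabla_\eta\log\pr_\eta(\cdot)|_{\eta_0}$, whose mean vanishes by the score identity ($m_g=\nabla_\eta\sum_{x,y,z}\pr_\eta(x,y,z)|_{\eta_0}=0$), so Theorem~\ref{thm:main} yields $\sqrt n\,\nabla M_n(\eta_0)=O_{\pr}(1)$. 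Similarly $\nabla^2M_n$ converges uniformly near $\eta_0$ to the deterministic composite-likelihood information $\nabla^2M(\eta_0)$ (Theorem~\ref{thm:main} again, applied to the second derivatives, which are bounded on $H$); inverting — assuming this matrix is nonsingular — gives $\sqrt n(\hat\eta_n-\eta_0)=O_{\pr}(1)$, i.e. a rate at least $1/\sqrt n$ (retaining the Gaussian limit would even yield asymptotic normality). Under equal group proportions \eqref{eq:model_equalgroup} the only change is that the score obeys the second part of Theorem~\ref{thm:main}: $\Sigma_g=0$ and $n\,\nabla M_n(\eta_0)$ converges in law, so $\nabla M_n(\eta_0)=O_{\pr}(1/n)$; the Hessian step is unchanged, and the same expansion gives $n(\hat\eta_n-\eta_0)=O_{\pr}(1)$, i.e. the rate improves to $1/n$.

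\emph{Main obstacle.} The delicate input is the non-degeneracy used in the last step. Assumption~\ref{hyp:ident_binary} only makes $\eta_0$ the unique maximiser of $M$, not a second-order non-degenerate one, so one must either check directly that $\nabla^2M(\eta_0)$ — the Fisher-type information of the constrained $3$-variate Bernoulli mixture \eqref{eq:multiBernAffil} — is negative definite, or replace the Taylor step by van der Vaart's rate theorems for $M$-estimators \citep[Ch.~5]{VDV}, using the curvature bound $M(\eta_0)-M(\eta)\gtrsim\|\eta-\eta_0\|^2$ near $\eta_0$ together with the modulus estimate $\sup_{\|\eta-\eta_0\|\le r}|(M_n-M)(\eta)-(M_n-M)(\eta_0)|\lesssim r\max_t|\hat m_{g_t}-m_{g_t}|$, which reduces everything to the rates of Theorem~\ref{thm:main} for the four indicators $g_t$. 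A further subtlety specific to \eqref{eq:model_equalgroup} is that there $\boldsymbol\gamma_0$ sits on the boundary of the feasible $\boldsymbol\gamma$-region (since $\sum_q\pi_q^2$ is minimal at equal proportions, cf. Lemma~\ref{lem:degeneracy}), so $\hat\eta_n$ need not be interior in the $\boldsymbol\gamma$-coordinates; the clean fix is to carry out the expansion in the $\boldsymbol\pi$-coordinates, where $\eta_0$ is interior, and transfer the rate to $\hat{\boldsymbol\gamma}_n$ via the Jacobian $\partial\boldsymbol\gamma/\partial\boldsymbol\pi$, whose vanishing at equal proportions is exactly the source of the degeneracy exploited in Theorem~\ref{thm:main}.
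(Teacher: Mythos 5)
Your proof is correct and follows essentially the same route as the paper: Wald-type consistency from uniform convergence of the normalized composite likelihood (Theorem~\ref{thm:main}) plus Jensen/identifiability, then rates via a Taylor expansion of the score, with the equal-proportions case inheriting the $1/n$ rate from the degenerate part of Theorem~\ref{thm:main}. Your exact reduction of $M_n$ to an affine combination of the four triplet-frequency estimators $\hat m_{g_t}$ is a clean shortcut for the uniform-convergence step, and the two caveats you flag (nonsingularity of $\nabla^2 M(\eta_0)$, and the degeneracy of $\partial\boldsymbol\gamma/\partial\boldsymbol\pi$ at equal proportions) are genuine and are also left as side conditions in the paper, which simply writes ``if the Fisher information matrix $K$ is invertible.''
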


Let us now comment  this result. 
We prove that the rate of convergence of our estimators is at least $1/\sqrt{n}$. However, our simulations (see Section~\ref{sec:simus}) seem to exhibit a faster rate, indicating that the limiting covariance matrix of the discrepancy $\sqrt{n} (\hat {\boldsymbol \gamma}_n-\boldsymbol {\gamma}, \hat \alpha_n- \alpha,\hat \beta_n-\beta)$ might be zero, even beyond the case of equal group proportions. 
Note also that when $Q\le 3$, a consequence of the above result is that the estimator of the group proportions $\hat {\boldsymbol \pi}_n$ defined through $\hat {\boldsymbol\gamma}_n$ as the unique solution to the system of equations \eqref{eq:gamma}, is also consistent and converges with a rate at least $1/\sqrt{n}$.\\

As it is always the case for mixture models, the (composite) log-likelihood  \eqref{eq:pseudologlik1} cannot be computed exactly (except for very small sample sizes).
Approximate   computation   of  the   estimators   in
\eqref{eq:estim_alpha_beta} can  be done using  an  \textsc{em}
algorithm  \citep{DLR}. This procedure is presented in Section~\ref{sec:EM_triplet}. 
It is  known \citep{Wu} that,  under reasonable
assumptions,  the \textsc{em} algorithm will  give a  solution converging  to the
estimators defined by \eqref{eq:estim_alpha_beta}, as the number of iterates grows to infinity.

\section{Weighted random graphs}\label{sec:weighted}
In  this  section,  we  focus   on  a  particular  instance  of  model
\eqref{eq:model_gen} for weighted random graphs. The observations are random variables $\{X_{ij}\}_{1\leq i< j\le n}$ that are either equal to $0$, indicating the absence of an edge between nodes $i$ and $j$, or a non-null real number, indicating the weight of the corresponding edge. 
We   still   assume  that   conditional   on   the  latent   structure
$\{Z_i\}_{1\le  i\le n}$, the  random variables  $\{X_{ij}\}_{1\leq i<
  j\le n}$ are independent, and the distribution of each $X_{ij}$ only
depends  on $Z_i$  and  $Z_j$. We  now  further specify  the model  by
assuming the following form for this distribution
\begin{equation}\label{eq:model_weighted}
  \forall q,\ell \in \{1,\ldots,Q\}, \quad  X_{ij} | Z_{iq}Z_{j\ell}=1 \sim  p_{q\ell} f(\cdot, \theta_{q\ell}) +(1-p_{q\ell}) \delta_0(\cdot),
\end{equation}
where $\{f(\cdot,\theta), \theta \in  \Theta\}$ is a parametric family
of  distributions,  $\delta_0$  is   the  Dirac  measure  at  $0$  and
$p_{q\ell}  \in (0,1]$  are  sparsity parameters.  We  let $\mathbf  p
=\{p_{q\ell}\}$   and  $\boldsymbol   \theta=\{\theta_{q\ell}\}$.  The
conditional  distribution of  $X_{ij}$ is  thus a  mixture of  a Dirac
distribution at zero accounting for non present edges, with proportion
given by  the sparsity  parameter $\mathbf{p}$ (which  can be  $1$ in
case of a complete weighted  graph) and a parametric distribution with
density $f$ that gives the weight of present edges. 
We focus on two different sparsity structures
\begin{itemize}
\item either  the sparsity is constant across  the graph: $p_{q\ell}=
  p, \forall 1\le q,l \le Q$; 
\item  or  the  sparsity  parameters model  an  affiliation  structure
  $p_{q\ell}=\alpha  1_{q=\ell} + \beta  1_{q\neq \ell}$  and we assume $\alpha
  \neq \beta$. 
\end{itemize}
We moreover assume that we know the sparsity structure type. 
In any  case, the connectivity  parameter $\boldsymbol \theta$ is assumed  to take
exactly two different values
\begin{equation*}
\forall q,\ell \in \{1,\ldots,Q\}, \theta_{q\ell} =
\left\{
  \begin{array}{ll}
\theta_{\text{in}}& \text{ if } q=\ell, \\
\theta_{\text{out}} & \text{ if } q\neq \ell ,      
  \end{array}
\right. 
\end{equation*}
with $\theta_{\text{in}}\neq \theta_{\text{out}}$.
For identifiability  reasons, we also constrain  the parametric family
$\{f(\cdot,\theta), \theta \in \Theta\}$ such that any distribution in
this set admits a continuous cumulative distribution function (c.d.f.)
at zero. Indeed, if this were  not the case, it would not be possible
to distinguish between a zero weight and an absent edge. Note that this model satisfies the affiliation assumption given by \eqref{eq:model_affil}.
Here, the complete data log-likelihood simply writes
\begin{multline}\label{eq:loglik2}
  \mathcal{L}_{X,Z}(\boldsymbol \pi, \mathbf p, \boldsymbol \theta) = \log \mathbb{P}_{\boldsymbol \pi,  \mathbf p, \boldsymbol \theta}(\{X_{ij}\}_{1\le i<j\le n}, \{Z_i\}_{1\leq i \le n}) = \sum_{i=1}^n \sum_{q=1}^{Q} Z_{iq} \log \pi_q \\
+ \sum_{1\le i<j \le n}\sum_{1\le q,\ell \le Q} Z_{iq}Z_{j\ell}\Big\{ 1_{X_{ij}\neq 0}( \log f(X_{ij},\theta_{q\ell}) + \log p_{q\ell}) + 1_{X_{ij}= 0}\log (1-p_{q\ell})\Big\}. 
\end{multline}
We now  give some examples of  parametric families $\{f(\cdot,\theta),
\theta \in \Theta\}$ that could be used as weights (or values) on the edges.

\begin{example}\label{ex:normal}
  Let $\theta=(\mu,\sigma^2) \in \Rset\times(0,+\infty)$ and consider $f(\cdot,\theta)$ the density of the Gaussian distribution with mean $\mu$ and variance $\sigma^2$.
\end{example}

\begin{example}\label{ex:poisson}
Let $\theta \in (0,+\infty)$ and consider $f(\cdot,\theta)$ the density (with respect to the counting measure) of the Poisson distribution, with parameter $\theta$, truncated at zero. Namely,
\begin{equation*}
 \forall k \ge 1 , \quad   f(k,\theta) = \frac {\theta^k} {k !}(e^{\theta}-1)^{-1} .
\end{equation*}
\end{example}

Note that in the above  example, the Poisson distribution is truncated
at zero because, as previously  mentioned, it would not be possible to
distinguish a zero-valued weight from an absent edge. Sparsity of the
graph  is modeled  through  the  parameter $\mathbf  p$  only and  the
density $f(\cdot,\theta)$ concerns weights on present edges.

\begin{figure}[htbp]
  \centering
\begin{tabular}{c  c}
\includegraphics[width=3.5cm]{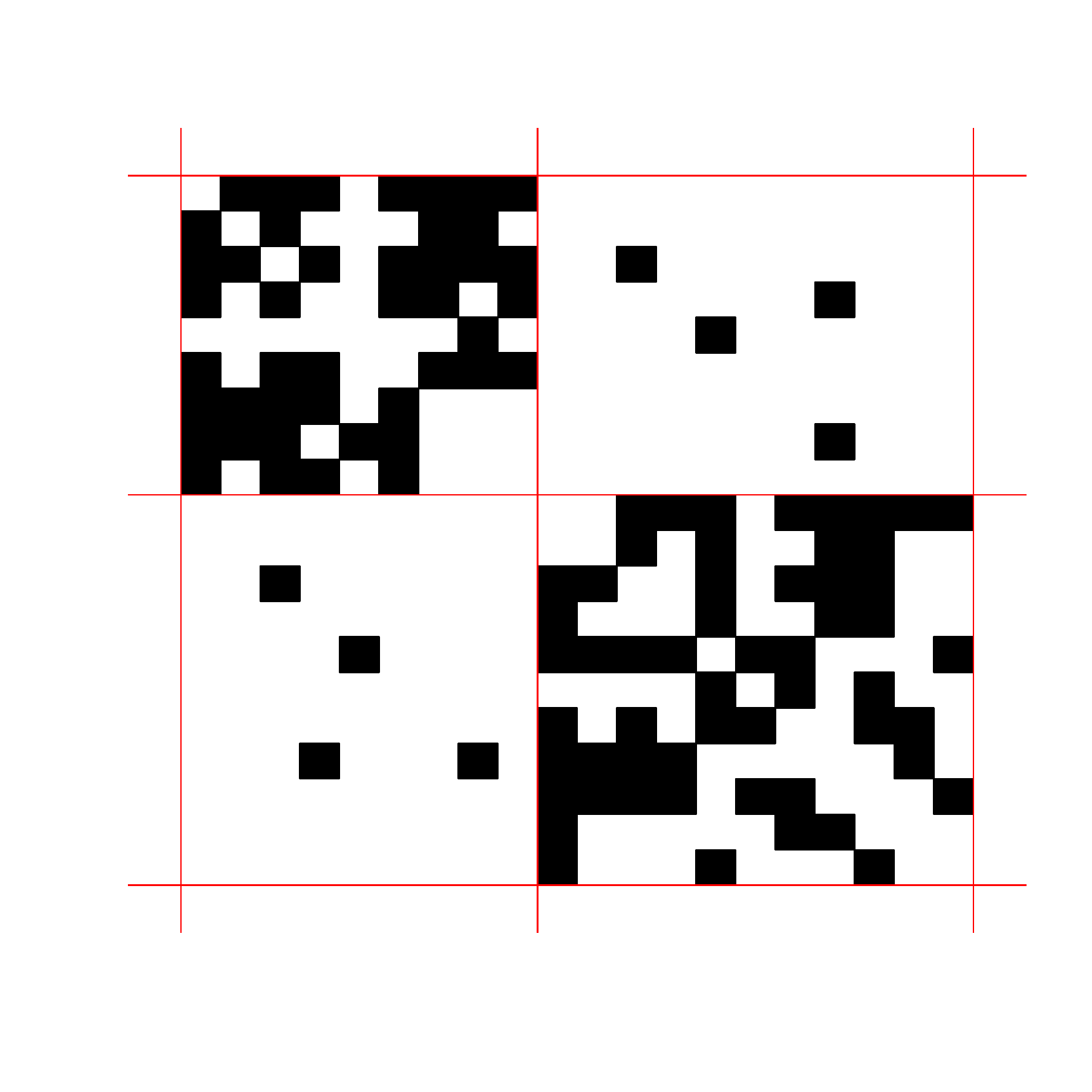} 
\includegraphics[width=3cm]{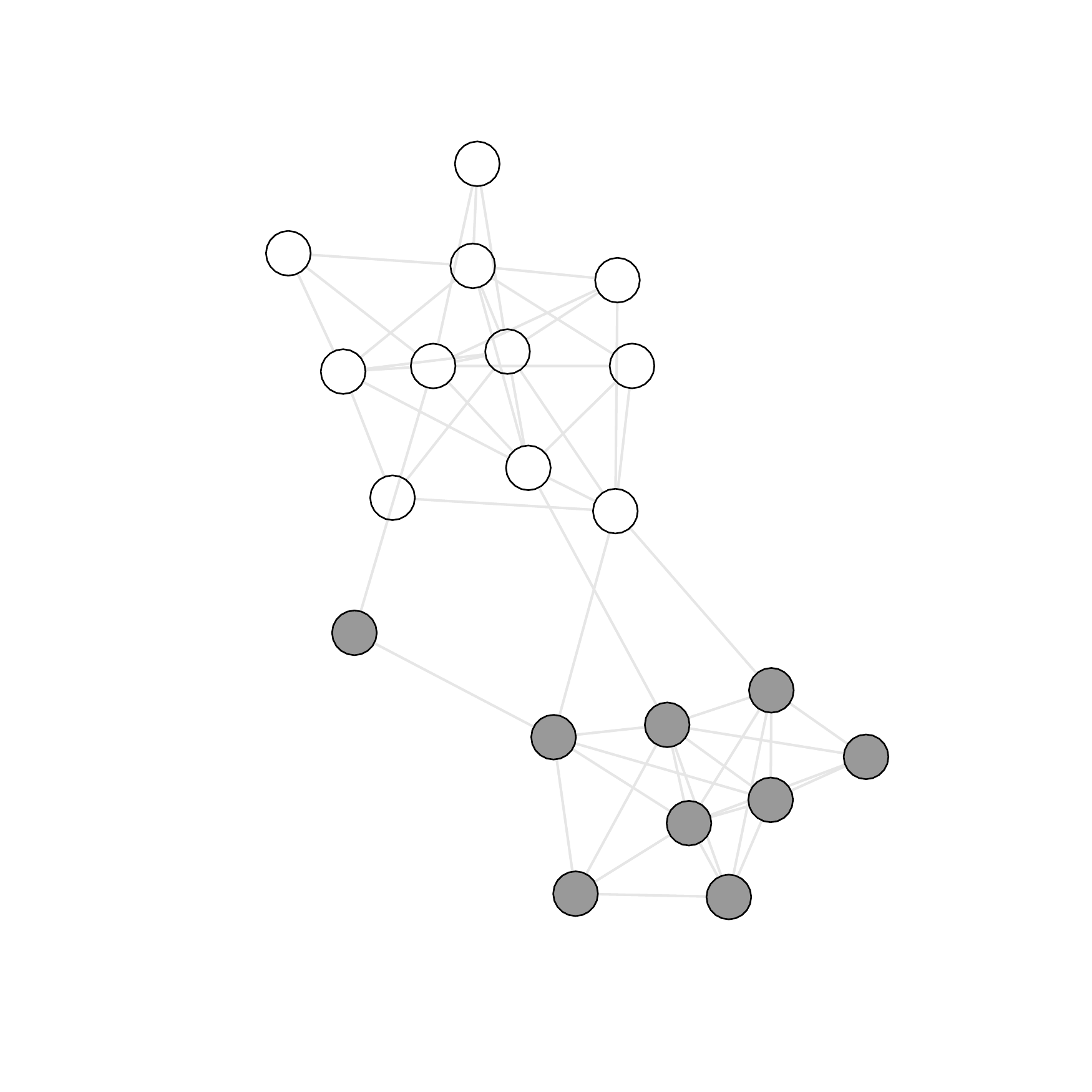} &
 \includegraphics[width=3.5cm]{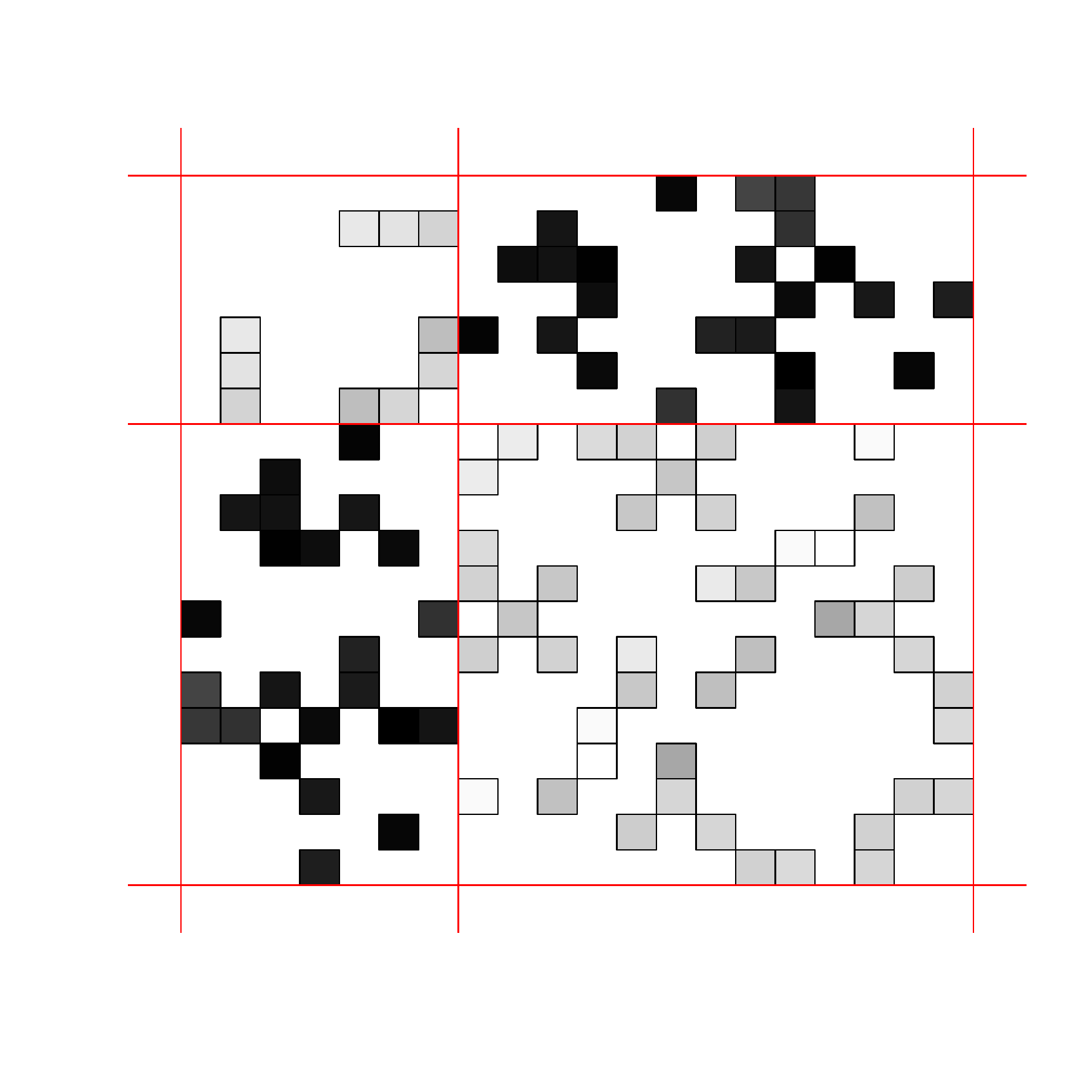} 
 \includegraphics[width=3cm]{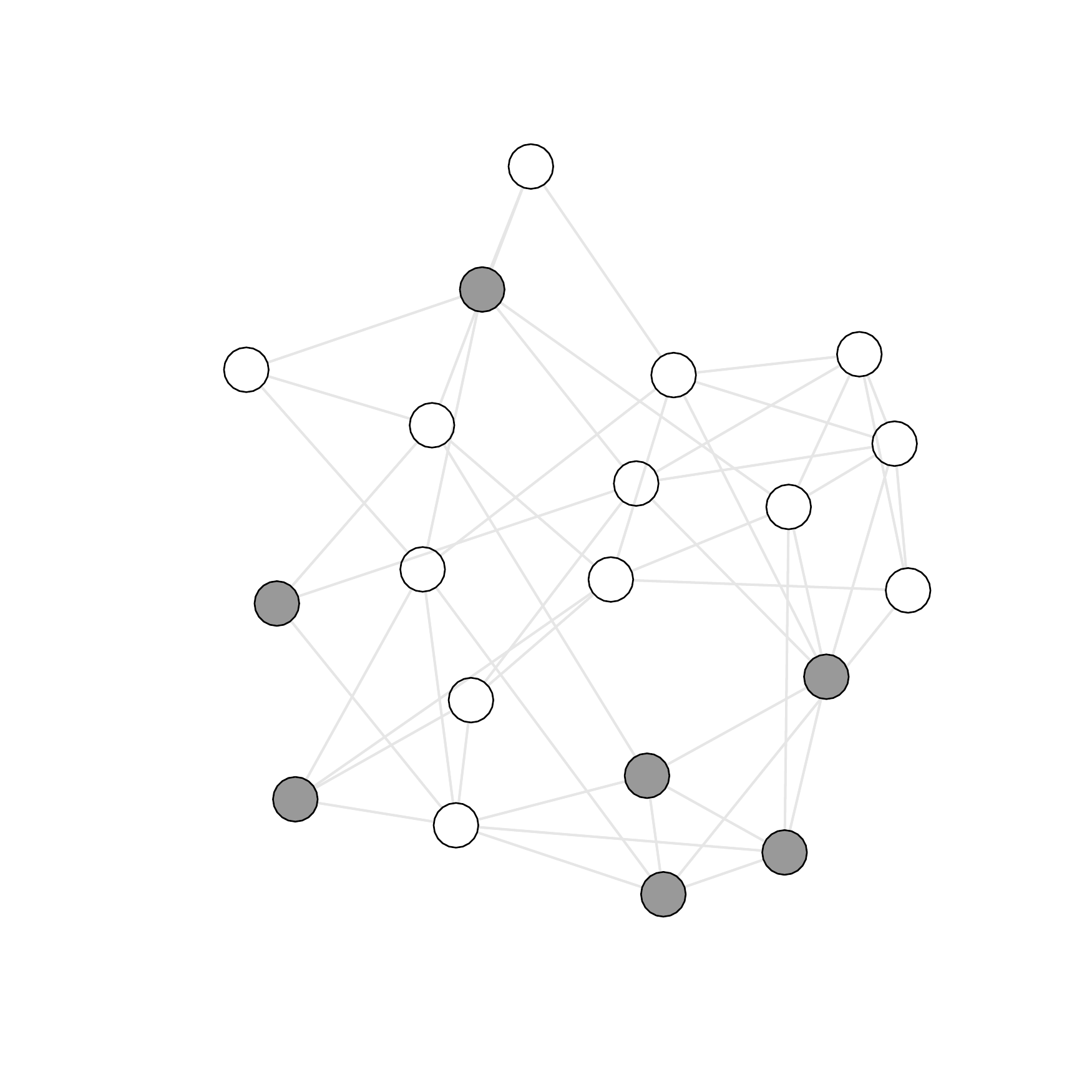} \\ 
 Binary affiliation model & Weighted affiliation model \\ 
\end{tabular}
   \caption{Simulation of binary (left) and Gaussian weighted (right) graphs with two
     classes  and 20  nodes. In  each case,  the picture  displays the
     graph representations with vertices colored according to classes, 
     as well as  the adjacency matrices where each entry $X_{ij}$  of the matrix is the  binary/weight value of the     edge between vertex  $i$ and vertex $j$. The  rows and columns of
     these matrices are organized according to the classes.}
  \label{fig:example}
\end{figure}

Figure~\ref{fig:example} illustrates the difference between  binary and  weighted random  graph affiliation models.
For example the weighted graph of Figure~\ref{fig:example}
displays no binary affiliation structure: if the weights where truncated using the function $x\to 1_{x\neq 0}$, we would not obtain that the two groups have different intra-group and inter-group connectivities. 
This means that classical community clustering algorithms would fail to find any meaningful  structure on this type of  graph. 

To  our knowledge, this model  has never been proposed
in this  form in the literature.   In particular, the  closest form is
given in \cite{Maria_Robin_Vacher} who do not introduce a possible Dirac mass at
zero to enable sparsity of the graph. \\

Let us now describe our estimation procedure based on $M$-estimators and a composite likelihood criterion. 
We proceed  in two  steps and first  estimate the  sparsity parameter,
relying on an induced binary random graph. In a second step, we plug-in this estimator and
focus  on  the  connectivity  parameters
$\boldsymbol \theta$ by relying only on the present edges.

\paragraph*{Estimating the sparsity parameter.}
Let us first consider the case where $p_{q\ell}=p$.
Then, we naturally estimate the sparsity parameter $p$ by 
\begin{equation*}
\hat p_n=\frac 2 {n(n-1)} \sum_{1\le i<j\le n} 1_{X_{ij}=0} .
\end{equation*}

The  consistency, as well  as well as the rate of convergence of  this estimator
follows from Theorem~\ref{thm:main}. 

In   the  case   where   the  sparsity   parameter  rather   satisfies
$p_{q\ell}=\alpha 1_{q=\ell} +\beta  1_{q\neq \ell}$, with $\alpha\neq
\beta$,  we rely on
the   underlying   binary    random   graph   (obtained   by   setting
$Y_{ij}=1_{X_{ij}\neq    0}$)    and     apply    the    results    of
Sections~\ref{sec:bin_poly} or \ref{sec:bin_multidim} to consistently 
estimate $\alpha$ and $\beta$.

\paragraph*{Estimating the connectivity parameter $\boldsymbol \theta$.}

The \emph{present} edges $X_{ij}$ (where $i,j$ are such that $X_{ij}\neq 0$) are non independent random
variables, distributed according to  a simple univariate mixture model
$ \sum_{q\ell}\pi_q\pi_\ell p_{q\ell} f(\cdot,\theta_{q\ell})$. 
For classical distributions $f(\cdot,\theta)$, it is possible to estimate the
connectivity parameters $\theta_{q\ell}$ of this univariate mixture directly. 
In  fact,  we  prove that  as  soon  as  the parameters $\{\theta_{q\ell}\}$ are  uniquely
identified  from  the  mixture $  \sum_{q\ell}\pi_q\pi_\ell  p_{q\ell}
f(\cdot,\theta_{q\ell})$   and   for   regular   parametric   families
$\{f(\cdot,\theta), \theta \in \Theta\}$, a consequence of Theorem~\ref{thm:main} is
that  maximizing a composite likelihood  of the  set  of present  edge
variables provides a consistent estimator of the parameters. 
Let us introduce the needed assumptions.

\begin{hyp} \label{hyp:ident}
  The  parameters  of  finite  mixtures  of  the  family  of  measures
  $\mathcal{F}= $ $\{ f(\cdot ;\theta) ;$ $ \theta \in \Theta\}$ are identifiable (up to label swapping). In other words, for any integer $m\ge 1$, 
  \begin{equation*}
\text {if }  \quad 
\sum_{i=1}^{m} \lambda_i f(\cdot, \theta_i) =  \sum_{i=1}^{m} \lambda_i'  f(\cdot, \theta_i') 
\quad 
\text{ then } \quad \sum_{i=1}^{m} \lambda_i \delta_{\theta_i} (\cdot) = \sum_{i=1}^{m} \lambda_i' \delta_{\theta_i'} (\cdot) .
  \end{equation*}
\end{hyp}

\paragraph*{Example \ref{ex:normal}, \ref{ex:poisson} (continued).}
\emph{Note that  both the families  of Gaussian   and truncated
  Poisson densities  satisfy  Assumption~\ref{hyp:ident}.  More  generally,  a wide range of  parametric  families  of  densities  on  $\Rset$  satisfy 
  Assumption~\ref{hyp:ident} \citep[see Section 3.1 in][for more details]{Titterington}.}  \\

The  next assumption  deals with  regularity conditions  on the
model. Note that this assumption could be weakened using the concept of differentiability in quadratic mean \citep[see for instance][]{VDV}.

\begin{hyp}\label{hyp:regular}
The functions $\theta \mapsto f(\cdot,\theta)$ are twice continuously differentiable on $\Theta$.
\end{hyp}

The last  assumption is  only technical and  not very  restrictive. It
requires the parameter set to be compact and could be weakened at the cost of some technicalities.

\begin{hyp}
  \label{hyp:compact2}
Assume  that there  exists some  $\delta >0$ and some compact subset $\Theta_c\subset \Theta$ such that  the parameter
space is restricted to the set $ \{(\boldsymbol \pi , \mathbf p, \boldsymbol \theta)  ; 
\forall 1\le q \le Q, \pi_q\ge \delta , \mathbf p \in [\delta, 1-\delta], 
\boldsymbol \theta\in \Theta_c \}$. 
\end{hyp}

Now, each \emph{present} edge variable $X_{ij}$ such that $X_{ij}\neq 0$ is distributed according to the mixture $\sum_{1\le q,\ell\le Q} \pi_q\pi_\ell p_{q\ell} f(\cdot;
\theta_{q\ell}) $.  As there are  only two different components in this
mixture, we express it in the more convenient form 
\begin{multline}\label{eq:weighted_mixture}
 \pr_{\boldsymbol \pi,
    \mathbf p, \boldsymbol \theta} (X_{ij}) = \Big\{ \sum_{q=1}^Q \pi_q^2p_{qq} \Big \} f(X_{ij}; \theta_{\text{in}}) +
  \Big \{\sum_{1\le q \neq \ell\le Q} \pi_q\pi_\ell p_{q\ell} \Big \} f(X_{ij}; \theta_{\text{out}})\\
:=\gamma_{\text{in}} f(X_{ij}; \theta_{\text{in}}) +\gamma_{\text{out}} f(X_{ij}; \theta_{\text{out}}) .
 \end{multline}
We consider   a  \emph{composite} log-likelihood   of   present   edges
defined by 
\begin{equation}\label{eq:pseudologlik2}
  \mathcal{L}^{\text{compo}}_{X}(\boldsymbol    \pi,    \mathbf    p,
  \boldsymbol \theta)
= \sum_{1\le i<j\le  n }\log (\gamma_{\text{in}} f(X_{ij}; \theta_{\text{in}}) +\gamma_{\text{out}} f(X_{ij}; \theta_{\text{out}}) ). 
\end{equation}
We stress that  this quantity is not derived from  the marginal of the
model  complete data likelihood  (expressed in  \eqref{eq:loglik2})  and is
simpler.  
We now define estimators as  
\begin{equation}
  \label{eq:estim_gamma_rho}
 \hat   {\boldsymbol  \theta}_n =\{\hat \theta_{\text{in}},\hat \theta_{\text{out}}\}  =   \argmax_{\boldsymbol  \theta   }
 \mathcal{L}^{\text{compo}}_{X}(\boldsymbol \pi, \hat {\mathbf p}_n, \boldsymbol \theta),
\end{equation}
where $\hat {\mathbf p}_n$ is  a preliminary step estimator of $\mathbf p$. Note that
due to the label swapping issue on the hidden states, we estimate the set of
values  $\{\theta_{\text{in}},\theta_{\text{out}}\}$  and  can  not  distinguish  $\theta_{\text{in}}$  from
$\theta_{\text{out}}$. Section~\ref{sec:latent} further deals with this issue. 

We are now able to prove the following theorem. 

\begin{theorem}\label{thm:cv_weighted}
In the model defined by \eqref{eq:model_gen} and \eqref{eq:model_weighted}, under Assumptions \ref{hyp:ident},  \ref{hyp:regular} and \ref{hyp:compact2}, 
the  set  of unordered $M$-estimators $  \hat
{\boldsymbol \theta}_n =\{\hat \theta_{\text{in}}, \hat \theta_{\text{out}} \}$ defined by \eqref{eq:estim_gamma_rho} 
is consistent, as the sample size $n$ grows to infinity. 
Moreover, the rate of this convergence is at least $1/\sqrt{n}$ and increases to $1/n$ in the particular case of equal group proportions \eqref{eq:model_equalgroup}. 
\end{theorem}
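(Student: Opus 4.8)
The plan is to regard $\hat{\boldsymbol\theta}_n$ as a classical $M$-estimator and to run the standard two-step programme of \citet{VDV}: first deduce strong consistency from the uniform convergence of the renormalised criterion towards a population limit whose maximiser is the truth, then extract the rate from a Taylor expansion of the composite score together with the central limit theorem of Theorem~\ref{thm:main}. Write $\varphi_{\boldsymbol\theta}(x)=\log(\gamma_{\text{in}}f(x;\theta_{\text{in}})+\gamma_{\text{out}}f(x;\theta_{\text{out}}))$, so that (present edges being the only ones contributing) $\mathcal{L}^{\text{compo}}_{X}(\boldsymbol\pi,\mathbf p,\boldsymbol\theta)=\sum_{1\le i<j\le n}\varphi_{\boldsymbol\theta}(X_{ij})\mathbf 1_{X_{ij}\neq0}$. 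For each fixed $\boldsymbol\theta$ and each fixed value of the weights $\gamma_{\text{in}},\gamma_{\text{out}}$ (which depend continuously on $\boldsymbol\pi$ and $\mathbf p$ through \eqref{eq:weighted_mixture}), the map $x\mapsto\varphi_{\boldsymbol\theta}(x)\mathbf 1_{x\neq0}$ is measurable on $\mathcal X=\Rset$ and, by Assumptions~\ref{hyp:regular} and~\ref{hyp:compact2} together with the moment hypothesis on $f$, square-integrable; Theorem~\ref{thm:main} with $k=2$, $p=1$ then gives $\tfrac{2}{n(n-1)}\mathcal{L}^{\text{compo}}_{X}\to M(\boldsymbol\theta):=\esp[\varphi_{\boldsymbol\theta}(X_{12})\mathbf 1_{X_{12}\neq0}]$ almost surely. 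To promote this pointwise statement to a uniform law of large numbers over the compact set $\Theta_c\times\Theta_c$, I would cover it by finitely many small balls, apply Theorem~\ref{thm:main} at their centres, and bound the oscillation of $\boldsymbol\theta\mapsto\varphi_{\boldsymbol\theta}$ inside each ball using the equicontinuity furnished by Assumptions~\ref{hyp:regular} and~\ref{hyp:compact2} and a square-integrable envelope $\sup_{\boldsymbol\theta}|\varphi_{\boldsymbol\theta}(X_{12})|$; the preliminary estimator $\hat{\mathbf p}_n$ (hence $\hat\gamma_{\text{in}},\hat\gamma_{\text{out}}$) is folded in by the same device, its almost sure convergence coming from Theorem~\ref{thm:main} in the constant-sparsity case and from Theorems~\ref{thm:bin_moment} or~\ref{thm:multiv_bernoulli} applied to the induced binary graph $Y_{ij}=\mathbf 1_{X_{ij}\neq0}$ in the affiliation-sparsity case.

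Next I would identify the maximiser of $M$. Conditionally on $X_{12}\neq0$, the law of $X_{12}$ is, up to renormalisation, the true two-component mixture $\gamma^\star_{\text{in}}f(\cdot;\theta^\star_{\text{in}})+\gamma^\star_{\text{out}}f(\cdot;\theta^\star_{\text{out}})$; by the strict concavity of the logarithm (equivalently, non-negativity of the Kullback--Leibler divergence), $M$ is maximised precisely when $\gamma_{\text{in}}f(\cdot;\theta_{\text{in}})+\gamma_{\text{out}}f(\cdot;\theta_{\text{out}})$ agrees with the true present-edge density almost everywhere. Assumption~\ref{hyp:ident} then forces the unordered pair $\{(\gamma_{\text{in}},\theta_{\text{in}}),(\gamma_{\text{out}},\theta_{\text{out}})\}$ to equal its true value, so in particular $\{\theta_{\text{in}},\theta_{\text{out}}\}=\{\theta^\star_{\text{in}},\theta^\star_{\text{out}}\}$; continuity of $M$ on the compact $\Theta_c\times\Theta_c$ makes this maximum well separated. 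Strong consistency of the unordered set $\{\hat\theta_{\text{in}},\hat\theta_{\text{out}}\}$ then follows from \citet[Theorem~5.7]{VDV}, the label-swapping ambiguity being precisely why one cannot do better than the unordered pair.

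For the rate, assuming as is standard that $\theta^\star_{\text{in}},\theta^\star_{\text{out}}$ lie in the interior of $\Theta_c$, the estimator eventually solves the composite score equation $\nabla_{\boldsymbol\theta}\mathcal{L}^{\text{compo}}_{X}=0$, and a first-order Taylor expansion around $\boldsymbol\theta^\star$ gives $\sqrt n(\hat{\boldsymbol\theta}_n-\boldsymbol\theta^\star)=-\big[\tfrac{2}{n(n-1)}\nabla^2_{\boldsymbol\theta}\mathcal{L}^{\text{compo}}_{X}(\bar{\boldsymbol\theta})\big]^{-1}\,\sqrt n\,\tfrac{2}{n(n-1)}\nabla_{\boldsymbol\theta}\mathcal{L}^{\text{compo}}_{X}(\boldsymbol\theta^\star)$ plus a remainder controlled by $\sqrt n(\hat{\mathbf p}_n-\mathbf p^\star)$. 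Each coordinate of $\tfrac{2}{n(n-1)}\nabla_{\boldsymbol\theta}\mathcal{L}^{\text{compo}}_{X}(\boldsymbol\theta^\star)$ is an estimator $\hat m_g$ whose target $m_g$ vanishes (differentiating $\int(\gamma_{\text{in}}f(\cdot;\theta_{\text{in}})+\gamma_{\text{out}}f(\cdot;\theta_{\text{out}}))=\gamma_{\text{in}}+\gamma_{\text{out}}$, which is constant in $\boldsymbol\theta$), so Theorem~\ref{thm:main} yields $\sqrt n(\hat m_g-m_g)\leadsto\mathcal N(0,\Sigma_g)=O_P(1)$; the negative Hessian converges almost surely (again by Theorem~\ref{thm:main}) to the composite information matrix, invertible under Assumptions~\ref{hyp:ident} and~\ref{hyp:regular}, and $\hat{\mathbf p}_n-\mathbf p^\star=O_P(n^{-1/2})$ by the results of Section~\ref{sec:binary}. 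Hence $\hat{\boldsymbol\theta}_n-\boldsymbol\theta^\star=O_P(n^{-1/2})$. In the equal group proportions case~\eqref{eq:model_equalgroup} the model has the affiliation structure~\eqref{eq:model_affil}, so Theorem~\ref{thm:main} gives $\Sigma_g=0$ and $n(\hat m_g-m_g)=O_P(1)$ for every $g$, and likewise $\hat{\mathbf p}_n-\mathbf p^\star=O_P(n^{-1})$; re-running the expansion then upgrades the rate to $\hat{\boldsymbol\theta}_n-\boldsymbol\theta^\star=O_P(n^{-1})$.

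The main obstacle is the uniform law of large numbers of the first paragraph: Theorem~\ref{thm:main} only delivers convergence for a fixed $g$, and the genuine work is exhibiting a square-integrable envelope for $\varphi_{\boldsymbol\theta}$ (and for its first and second $\boldsymbol\theta$-derivatives, needed in the rate step), since where one of the mixture components is small both $\log$ and the score can be large — this forces one to exploit the two-component structure of \eqref{eq:weighted_mixture}, the continuity of $f$ at $0$, and the compactness Assumption~\ref{hyp:compact2}. A secondary technical point is the clean propagation of the plug-in $\hat{\mathbf p}_n$ and of the induced mixture weights $\gamma_{\text{in}},\gamma_{\text{out}}$ through both the uniform convergence and the Taylor expansion.
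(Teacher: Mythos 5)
Your proposal follows essentially the same route as the paper's proof: consistency via pointwise convergence of the normalized composite likelihood (Theorem~\ref{thm:main}) combined with uniform equicontinuity over the compact parameter set and the plug-in of $\hat{\mathbf p}_n$, identification via non-negativity of the Kullback--Leibler divergence together with Assumption~\ref{hyp:ident} (up to label swapping), and the rate via a Taylor expansion of the composite score whose gradient and Hessian terms are controlled by Theorem~\ref{thm:main}, with the degenerate case $\Sigma_g=0$ delivering the $1/n$ rate under equal group proportions. The envelope/uniformity issue you single out as the main obstacle is exactly what the paper's point $iii)$ (uniform equicontinuity, carried over from the proof of Theorem~\ref{thm:multiv_bernoulli} using Assumption~\ref{hyp:compact2} and the continuity of $\theta\mapsto f(\cdot,\theta)$) is meant to handle, so your plan matches the paper's argument in both structure and substance.
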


The proof mainly relies on the consistency of the normalized criterion 
\eqref{eq:pseudologlik2}.  This
point is a direct consequence of Theorem~\ref{thm:main}.  Then, from the criterion consistency, the
identifiability  and the  regularity assumptions,  one can  derive the
consistency of the corresponding $M$-estimator from   classical  theory \citep{VDV,Wald}. 

As already noted in the case of Theorem~\ref{thm:multiv_bernoulli}, our result establishes a rate of convergence equal at least to $1/\sqrt{n}$. The simulations (Section~\ref{sec:simus}) seem to indicate that the rate may in fact be faster, a phenomenon that may be due to the degeneracy of the limiting variance of $\sqrt{n}(\hat \theta_n-\theta)$.

As for $M$-estimators in the binary case, we shall approximate this maximum (composite) likelihood estimator using an \textsc{em} procedure \citep{DLR} whose convergence properties are   well-established   \citep{Wu}.   Contrarily   the procedure presented   in Section~\ref{sec:bin_multidim} where we  need to adapt the \textsc{em}
framework to our specific model, we rely here on the classical
\textsc{em} algorithm and thus do not recall it.

\section{Algorithms}\label{sec:algos}

In this section, we provide tools to implement the
procedures previously described, as well  as a complement on the issue
of recovering the latent structure of a graph. 

\subsection{EM algorithm with triplets}\label{sec:EM_triplet}

Let us  describe in this section the \textsc{em} algorithm developed to approximate the estimators defined by \eqref{eq:estim_alpha_beta}.

In the following, each set of three nodes $\{i,j,k\}$ corresponds to an index $\ui$ ranging over the set $\{1,\ldots,N\}$, where $N=n(n-1)(n-2)$ is the total number of triplets.   
We let $\bX^{\ui}=(X^{\ui,1},X^{\ui,2},X^{\ui,3})$  be one of the observed  triplets (namely each $X^{\ui,j}$ for $1\le j \le 3$ corresponds to some former random variable $X_{st}$ for some $1\le s,t\le n$) and $U_{\ui} \sim \mathcal{M}(1, \boldsymbol \gamma)$ is the vector encoding the  corresponding hidden state. Namely, $U_{\ui} \in \mathcal{V}_5$. 
We also denote by  $\tau_{\ui k}$ the posterior probability of node triplet $\ui$ being in state $k$, conditional on the observation $\bX^{\ui}$, namely  $\tau_{\ui k}= \mathbb{P}(U_{\ui k}=1|\bX^{\ui})$, for $1\le k\le 5$ and $1\le \ui\le N$. Moreover, we encode the fact that, conditional on the $5$ different hidden states of $U$, each coordinate of $\bX$ is distributed according either to $\mathcal{B}(\alpha) $ or $\mathcal{B}(\beta) $, using the following notation
\begin{equation*}
  \delta_{jk}= (\delta_{jk}^1,\delta_{jk}^2) =(1_{X^{\ui,j} |U_{\ui k}=1 \sim \mathcal{B}(\alpha) }, 1_{X^{\ui, j} |U_{\ui k}=1 \sim \mathcal{B}(\beta) }), 
\end{equation*}
for all $1\le j\le 3, \; 1\le k\le 5 $ and any  $1\le \ui \le N$.
Note that $\delta_{jk}$ is deterministic and that $\delta_{jk}\in \mathcal{V}_2$.
With these notations, we are in the situation where we consider a composite likelihood \eqref{eq:pseudologlik1} of random vectors $\{\bX^{\ui}\}_{1\le \ui \le N}$ from the mixture of $5$ different $3$-dimensional Bernoulli distributions, the latent classes being the random vectors $\{U_{\ui} \}_{1\le \ui\le N}$.

The  \textsc{em}-algorithm is intended to iteratively compute and optimize, with respect to $(\boldsymbol \gamma , \alpha,\beta)$ the function
\begin{equation*}
  Q((\boldsymbol \gamma, \alpha,\beta);(\boldsymbol {\gamma}^{(s)}, \alpha^{(s)},\beta^{(s)})) 
= \esp_{\boldsymbol {\gamma}^{(s)}, \alpha^{(s)},\beta^{(s)}} \big[
\log \mathbb{P}_{\boldsymbol \gamma, \alpha,\beta}(\{U_{\ui}\}_{1\leq \ui \le N},\{\bX^{\ui}\}_{1\le \ui\le N}) | \{\bX^{\ui}\}_{1\le \ui \le N} \big] ,
\end{equation*}
using the current value of the parameter $(\boldsymbol {\gamma}^{(s)}, \alpha^{(s)},\beta^{(s)})$.
If we let $\tau_{\ui k}^{(s)}= \mathbb{P}_{\boldsymbol {\gamma}^{(s)}, \alpha^{(s)},\beta^{(s)}}(U_{\ui k}=1|\bX^{\ui})$, we can write 
\begin{multline}
  Q((\boldsymbol \gamma, \alpha,\beta) ; (\boldsymbol {\gamma}^{(s)}, \alpha^{(s)},\beta^{(s)})) 
= \sum_{\ui=1}^N \sum_{k=1}^{5} \tau_{\ui k}^{(s)} \log  \gamma_k 
+ \sum_{ \ui =1}^N\sum_{ k=1}^5 \tau_{\ui k}^{(s)} \\
\times\sum_{j=1}^3 \delta_{jk}^1\Big\{X^{\ui, j} \log \alpha +(1-X^{\ui,j })\log(1-\alpha)\Big\} + \delta_{jk}^2 \Big\{X^{\ui, j} \log \beta +(1-X^{\ui,j})\log(1-\beta)\Big\} .  
\label{eq:Q}
\end{multline}
Starting from an initial value $(\boldsymbol {\gamma}^{(1)}, \alpha^{(1)},\beta^{(1)})$, the \textsc{em} algorithm proceeds in two iterative steps. At iteration $s$, the \textsc{e}-step computes the posterior distribution of $U_{\ui}$ conditional on $\bX^{\ui}$. Namely,  
\begin{equation*}
  \tau_{\ui k}^{(s)} = \mathbb{P}_{\boldsymbol {\gamma}^{(s)} , \alpha^{(s)},\beta^{(s)}}(U_{\ui k}=1 |\bX^{\ui}) =\frac{\gamma_k^{(s)} \prod_{j=1}^3 b(X^{\ui,j} , \delta_{jk}^1 \alpha^{(s)} +\delta_{jk}^2\beta^{(s)})} {\sum_{\ell=1}^5   \gamma_\ell^{(s)} \prod_{j=1}^3 b(X^{\ui,j} , \delta_{j\ell}^1\alpha^{(s)} +\delta_{j\ell}^2\beta^{(s)}) } ,
\end{equation*}
for every $1\le \ui\le N$ and every $1\le k \le 5$. 
By using \eqref{eq:Q}, we then get  the value of $Q((\boldsymbol \gamma, \alpha,\beta) ; (\boldsymbol {\gamma}^{(s)}, \alpha^{(s)},\beta^{(s)}))$. In the \textsc{m}-step, this quantity is maximized with respect to $(\boldsymbol \gamma, \alpha,\beta) $ and the maximizer gives the next value of the parameter $(\boldsymbol \gamma^{(s+1)}, \alpha^{(s+1)},\beta^{(s+1)})$. This step relies on  the following equations.  
\begin{equation*}
  \begin{array}{cl}
\gamma_k^{(s+1)} =& N^{-1} \sum_{\ui =1}^N \tau_{\ui k}^{(s)} , \quad  k=1,5 ,\\
\gamma_k^{(s+1)} =&  (3N)^{-1}\sum_{\ui =1}^N \tau_{\ui 2}^{(s)}+\tau_{\ui 3}^{(s)}+\tau_{\ui 4}^{(s)} , \quad k=2,3,4,\\
  \alpha^{(s+1)} =& \Big(\sum_{\ui =1}^N \tau_{\ui 1}^{(s)}( X^{\ui, 1}+X^{\ui, 2}+X^{\ui, 3}) + \tau_{\ui 2}^{(s)}X^{\ui, 3} + \tau_{\ui 3}^{(s)}X^{\ui, 2} + \tau_{\ui 4}^{(s)}X^{\ui, 1}  \Big) \\
& \Big( \sum_{\ui =1}^N 3\tau_{\ui 1}^{(s)}+ \tau_{\ui 2}^{(s)} + \tau_{\ui 3}^{(s)} + \tau_{\ui 4}^{(s)}\Big)^{-1} ,\\
 \beta^{(s+1)} =& \Big(\sum_{\ui=1}^N \tau_{\ui 2}^{(s)}(X^{\ui, 1}+X^{\ui, 2}) + \tau_{\ui 3}^{(s)}(X^{\ui, 1}+X^{\ui, 3}) + \tau_{\ui 4}^{(s)}(X^{\ui, 2}+X^{\ui, 3}) \\
&+\tau_{\ui 5}^{(s)}( X^{\ui, 1}+X^{\ui, 2}+X^{\ui, 3})\Big)
\Big(\sum_{\ui =1}^N 2\tau_{\ui 2}^{(s)} + 2\tau_{\ui 3}^{(s)}+ 2\tau_{\ui 4}^{(s)}+3\tau_{\ui 5}^{(s)}\Big)^{-1} .
  \end{array}
\end{equation*}
It should be noted that the sum over all the $N$ possible triplets reduces in fact to a sum over $8$ different possible patterns for the values of $\bX^{\ui}$. Indeed, the posterior probabilities $\tau_{\ui k}$ are constant across triplets with the same observed value.

\subsection{Unraveling the latent structure}\label{sec:latent}

The general method  we develop in this section  aims at recovering the
latent structure $\{Z_i\}_{1\le i \le  n}$ on the graph nodes. Indeed,
the procedures developed in the previous sections only focus on estimating the parameters and do not directly provide an estimate for the node groups. 

We rely here on  a simple method:   we    plug-in   the    estimators    obtained   from
the previous sections in the  complete data likelihood of the model
(namely  the likelihood of  the observations  $\{X_{ij}\}_{1\le i<j\le
  n}$ and the latent classes $\{Z_i\}_{1\le i \le n}$). As we  do not have estimates of the mixture
proportions  $\boldsymbol \pi$, we  simply remove  this part  from the
expression  of the complete data likelihood.  Then, we simply maximize this
criterion (which we call a classification likelihood) with respect to the latent structure
$\{Z_i\}_{1\le i  \le n}$. In  a latter step,  we then
  estimate   the  unknown   proportions  $\boldsymbol   \pi$   by  the
  frequencies observed on the estimated groups $\hat Z_i$.

\paragraph*{Criterion in the binary case.} In this setup, we introduce a criterion
$\mathcal{C}$, built on the complete data likelihood, 
where we plugged-in the estimators  of $\alpha$ and $\beta$ and removed
the dependency on $\boldsymbol \pi $. This criterion simply writes 
\begin{multline*}
  \mathcal{C}(\{Z_i\}_{1\le i \le n}) =
 \sum_{1\le i<j  \le n}\sum_{  q=1 }^Q Z_{iq}Z_{jq}  \big\{X_{ij} \log
\hat \alpha +(1-X_{ij})\log(1-\hat \alpha)\big\} \\
+ \sum_{1\le  i<j \le n}\sum_{1\le  q\neq \ell \le  Q} Z_{iq}Z_{j\ell}
\big\{X_{ij} \log \hat \beta +(1-X_{ij})\log(1-\hat \beta)\big\}.  
\end{multline*}

\paragraph*{Criterion in the weighted case.}
Let    us    recall     that    the    estimation    procedure    from
Section~\ref{sec:weighted} only recovers the set of unordered values
$\{\theta_{\text{in}}, \theta_{\text{out}}\}$. 
As  we know  these  parameters  up to  permutation  only, let  $\{\hat
\theta_1,\hat \theta_2\}$  be any  label choice for  the corresponding
estimators.   We  can   consider  two   different   criteria,  denoted
$\mathcal{C}^{1,2}$ and $\mathcal{C}^{2,1}$, as follows
\begin{multline*}
\mathcal{C}^{u,v}  ( \{Z_i\}_{1\le i \le n})=\\
\sum_{\substack{1\le i<j\le n\\1\le q\neq \ell \le Q}}
Z_{iq}Z_{j\ell} [1_{X_{ij\neq 0}} (\log f(X_{ij};\hat \theta_u)+\log \hat p_{q\ell}) +1_{X_{ij} =0}\log(1- \hat p_{q\ell})] \\ +
 \sum_{\substack{1\le i<j\le n\\ 1\le q \le Q}}
Z_{iq}Z_{jq} [1_{X_{ij\neq 0}} (\log f(X_{ij};\hat \theta_v)+\log \hat p_{qq}) +1_{X_{ij} =0}\log(1-\hat p_{qq})],  
\end{multline*}
where $\{u,v\}=\{1,2\}$. 
For each of these criteria, we can 
select the latent structure $ \hat Z^{u,v}=( \hat Z_1,\ldots,\hat Z_n)^{u,v}$ maximizing it. Then, choosing the couple $(u^\star,v^\star)$ maximizing the resulting quantity $\mathcal{C}^{u,v}  ( \hat Z^{u,v})  $
seems to be an interesting strategy. 
We thus finally define our estimated latent structure $(\hat Z_1,\ldots,\hat Z_n)$ as $ \hat Z^{u^\star,v^\star}$. \\

\paragraph*{Iterative estimation of the latent structure. }
In  any  case  (either binary  or  weighted),  we  propose to  use  an
iterative procedure to  compute the maximum $\hat Z$  of the criterion
$\mathcal{C}(\{Z_i\}_{i}) $. 
Starting from an initial value $Z^{(1)}=(Z_1^{(1)},\ldots, Z_n^{(1)})$ of the latent structure, we iterate the following steps. At step $s$, we (uniformly) choose a node $i_0$ and select $Z_{i_0}^{(s+1)}$ as
\begin{equation*}
Z_{i_0}^{(s+1)}  =   \argmax_{1\le  q\le  Q}  \mathcal{C}(\{Z_i^{(s)}\}_{i \neq i_0}, Z_{i_0}=q) 
\end{equation*}
while we let $Z_j^{(s+1)}=Z_j^{(s)}$ for $j\neq i_0$. At each time step, we  increase the classification likelihood $\mathcal{C}(\{Z_i\}_{1\le i \le n})  $ and thus the procedure eventually converges to a (local) maxima. By using different initial values $Z^{(1)}=(Z_1^{(1)},\ldots, Z_n^{(1)})$, we should finally find the global maxima. 
Once  we estimated  the latent  groups $\hat  Z_i$, we  may  obtain an
estimate  of the  group proportions  $\hat {\boldsymbol  \pi}$ by  simply taking  the
corresponding frequencies.
The           procedure           is           summarized           in
Function~\textbf{\texttt{latent.structure}}. \\

\begin{function}[H]
\dontprintsemicolon
\SetKwInOut{Input}{input}\SetKwInOut{Output}{output}
\Input{observed graph, parameter values}
\Output{latent structure and group proportions}

\BlankLine
Start from latent structure $\{Z_i\}$\; 
 \While{convergence is not attained}{
Choose node $i_0$\;
Replace $Z_{i_0}$ with $\argmax_{q} \mathcal{C}( \{Z_i\}_{i\neq i_0}, Z_{i_0}=q)$\;
}
Compute group proportions $\boldsymbol \pi$ from  $\{Z_i\}$\;
\BlankLine
\caption{latent.structure (graph, parameters)}\label{algo:function_latent}
\end{function}

\subsection{Complete algorithm description}

Algorithm~\ref{algo:main}  describes   the  procedures  for  analyzing
binary or weighted random graphs. We introduce a variable 'method' which can take three different values: 'moments' or 'tripletEM' in the binary case and 'weighted' for weighted random graphs. In the weighted case, we moreover use a second variable called 'sparsity' to indicate whether we estimate a global sparsity parameter $p$ ('sparsity=global') or two parameters $\alpha,\beta$ from an affiliation structure ('sparsity=affiliation').
The performances of the procedures proposed in the current section are
illustrated in the following one. \\

\begin{algorithm}[H]
  \dontprintsemicolon
\SetKwFunction{latent}{latent.structure}
 \BlankLine

 \If{method='moments'}{
Compute  $\hat m_i, i=1,2,3$ from \eqref{eq:estim_moment}\;
\CommentSty{// INITIALIZATION}\;
Start from latent structure $\{Z_i\}$ with  proportions $\boldsymbol \pi$ and compute $s_2,s_3$\;
 \While{convergence is not attained}{
 \CommentSty{// UPDATE PARAMETERS}\; 

   \If{$abs(\hat m_2-\hat m_1^2)<\epsilon$}{
   Compute $\alpha,\beta$ through \eqref{eq:bin_uniform_estim}\;
   }   
   \Else{Compute $\alpha,\beta$ through \eqref{eq:bin_non_uniform_estim}\;
   }
\CommentSty{// UPDATE LATENT STRUCTURE}\;
Apply \latent to the current parameter values\;
   }
 }

 \BlankLine
 \BlankLine

  \If{method='tripletEM'}{
Estimate $\alpha, \beta$ from \textsc{em} algorithm with triplets (Section~\ref{sec:EM_triplet})\;
Apply \latent to the parameter values\;
}

 \BlankLine
 \BlankLine

  \If{method='weighted'}{
\CommentSty{// SPARSITY PARAMETERS}\;
Transform weights $X_{ij}$ into binary variables $Y_{ij}=1_{X_{ij}\neq 0}$\;
  \If{sparsity='global'}{
Compute $\hat p=\frac 2 {n(n-1)} \sum_{i<j}Y_{ij}$\;
}
\Else{
Estimate $\alpha, \beta$ from \textsc{em} algorithm with triplets (Section~\ref{sec:EM_triplet})\;
}
\CommentSty{// CONNECTIVITY PARAMETERS} \;
Estimate $\{\theta_{\text{in}},\theta_{\text{out}}\}$ from \textsc{em}
algorithm with present edges (Section~\ref{sec:weighted})\;
\CommentSty{// LATENT STRUCTURE} \;
Apply \latent to the parameter values\;
}
 \BlankLine
\caption{Complete algorithm}
\label{algo:main}
\end{algorithm}

\section{Numerical experiments}\label{sec:simus}

We carried out a simulation study to examine the bias and variance of the proposed estimators. In the binary affiliation model, we also compared the performance of our proposal with 
the  variational  \textsc{em}   (\textsc{vem})  strategy  proposed  by
\cite{Daudin}. Note  that Gibbs sampling has already  been compared to
\textsc{vem} strategies in  \cite{Zanghi_online} and give very similar
results. Note also that the weighted affiliation model proposed here is original and we thus cannot compare our results in this case with any other existing implemented method. 

\subsection{Binary affiliation model}

\paragraph{Simulations set-up.} In  these experiments, we assumed that
edges  are  distributed  according  to the  binary  affiliation  model
described  in Section  \ref{sec:binary}.  The  data were  generated in
different settings, with  the number of groups $Q \in  \{ 2,5 \}$, the
number of  vertices $n \in \{20,50,100,500,1000\}$. For  each of these
cases, we created three settings corresponding to models with different ratios of intra and inter-group connectivity parameters (see Table \ref{tab:models}). Moreover, we considered two different cases: equal or free group proportions $\boldsymbol \pi$.

\begin{table}
\caption{ From left to right: low inter-group connectivity and strong intra-group connectivity (model 1),   strong inter-group connectivity and low intra-group connectivity (model 2),  model without structure  close to Erd\H{o}s-R\'enyi random  graph model  (model 3). The picture displays an example with $Q=2$ groups.}
 \centering
 \begin{tabular}{c c c c}
\hline
Model  & 1  & 2  & 3  \\ \hline \\
& \parbox{0.1\textwidth}{
                                         $\alpha=0.3$\\ 
                                         $\beta=0.03$} 
&  \parbox{0.1\textwidth}{$\alpha=0.03$\\ 
                                         $\beta=0.3$} 
&  \parbox{0.1\textwidth}{$\alpha=0.55$\\ 
                                         $\beta=0.45$} \\ 
& \includegraphics[width=0.2\textwidth]{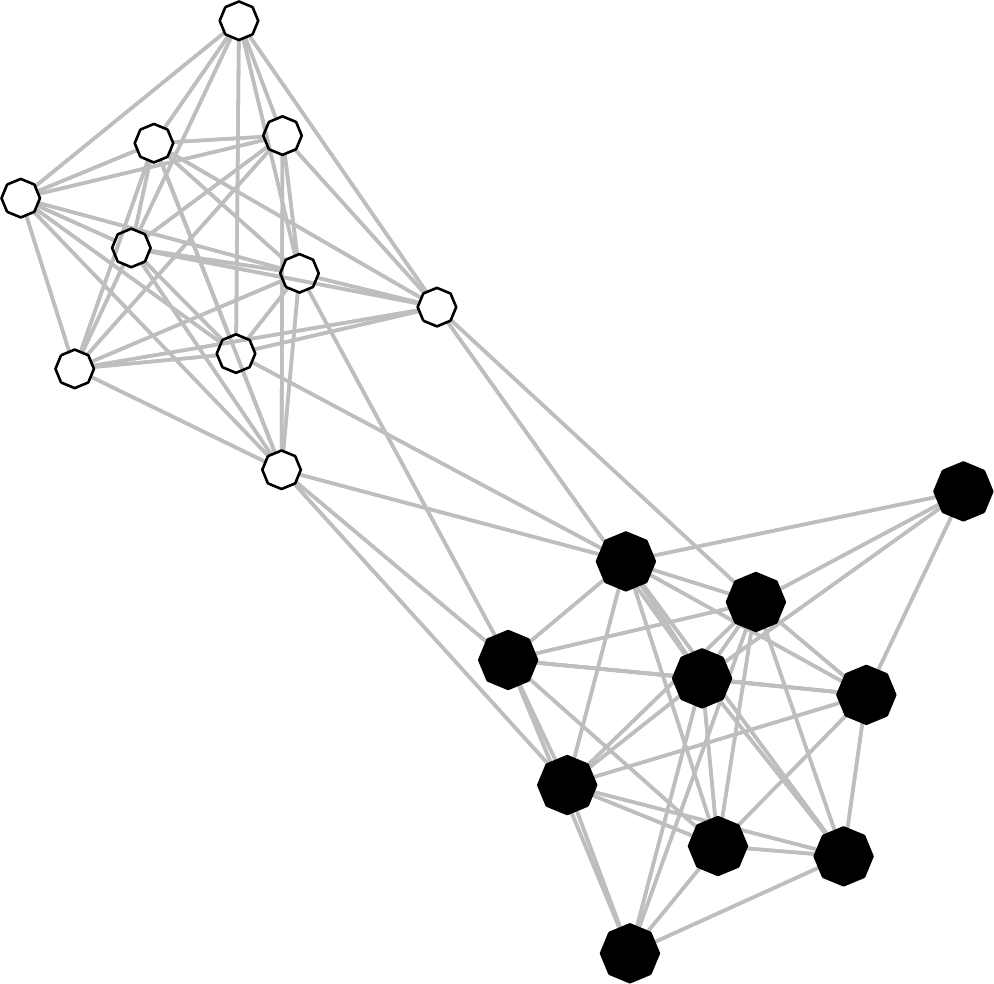}  &
\includegraphics[width=0.2\textwidth]{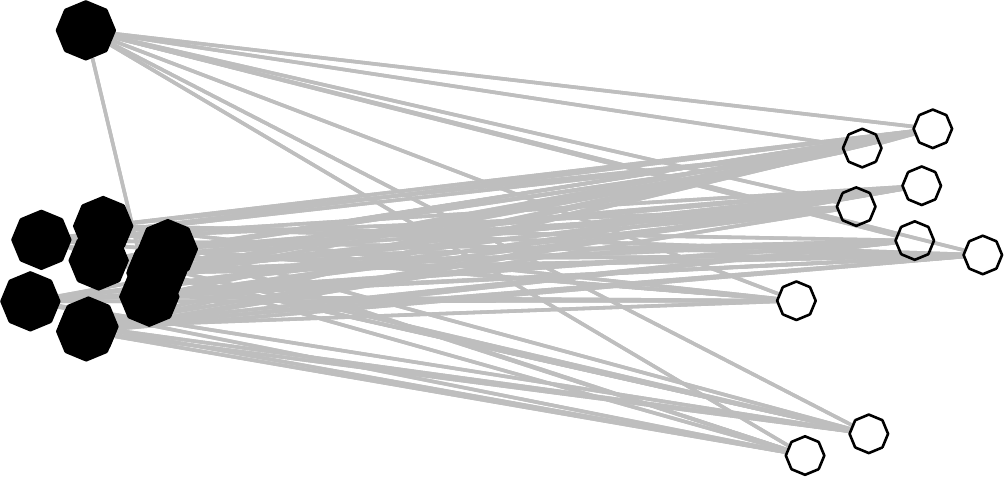} &
\includegraphics[width=0.2\textwidth]{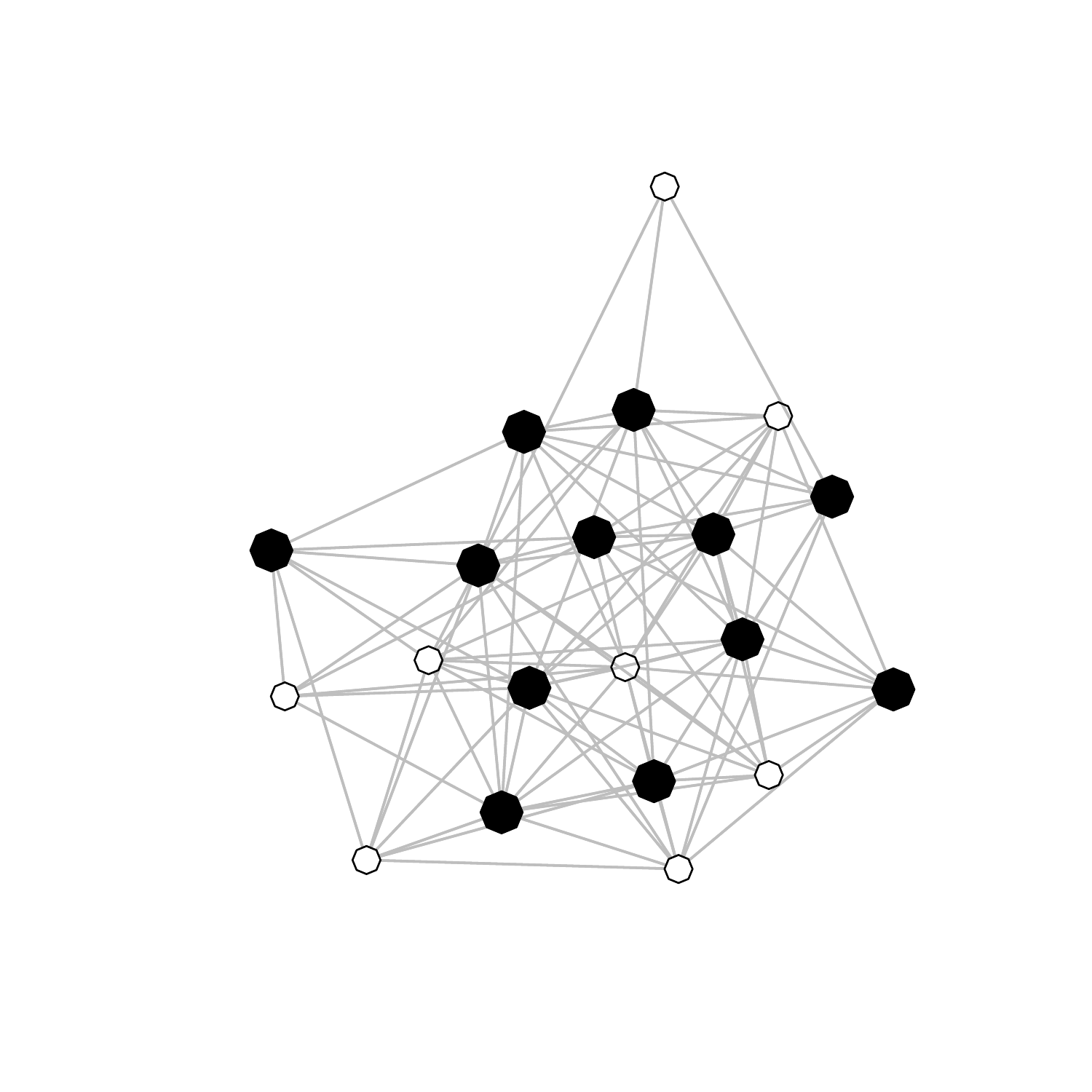} \\ 
\end{tabular}
\label{tab:models} 
\end{table}

In  each of  these settings,  we applied  three different  methods for
estimating the  model parameters: the  moment method (corresponding to Section~\ref{sec:bin_poly}), the      triplet     \textsc{em}     method      (corresponding     to
Section~\ref{sec:bin_multidim})  and  the  variational  \textsc{em}
strategy  (\textsc{vem})  proposed  by  \cite{Daudin},  that  we  
adapted to constrain it to an affiliation structure. The results for equal or free group proportions were similar and we thus present only the equal group proportions case.   

Figure~\ref{fig:contourplot}  shows the estimated  density (over $100$
graphs simulations) of the estimators $\hat{\alpha}$ and $\hat{\beta}$
for  the three algorithms  and the  three models  for graphs  with 500
vertices. 
We see  that for a given model the three methods produce estimators with similar densities. In particular,  the estimators of  $\alpha$ and $\beta$ seem to have little or no bias and  the variances are of the same order of magnitude for the three estimation methods. As the behaviour of the estimators of $\alpha$ and $\beta$ are comparable over all the simulations, we focus the discussion on the estimation of the parameter $\alpha$.

\begin{figure}[!htbp]
\centering
 \begin{tabular}{@{}l@{}r@{\hspace{0.01em}}l@{}}
&    \raisebox{-1.5cm}{\includegraphics[width=0.55\textwidth]{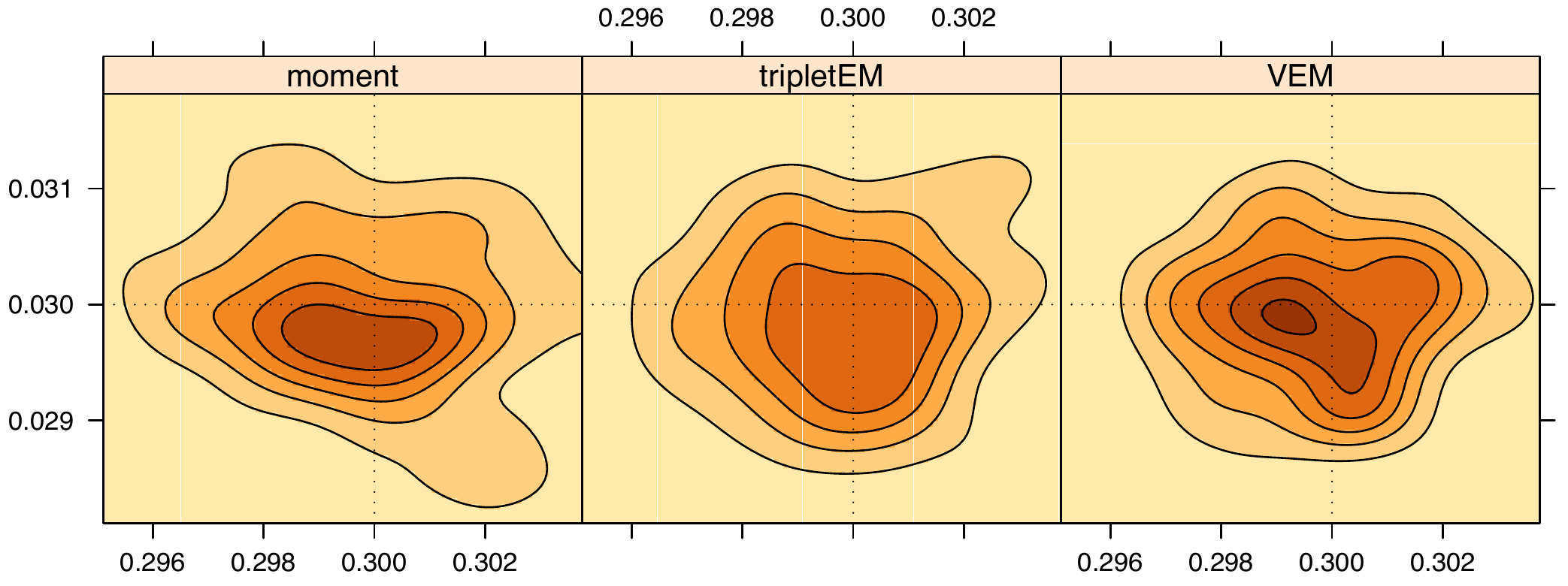}} 
&     \rotatebox{270.0}{\makebox[0.3cm]{Model 1}} \\
      \rotatebox{90.0}{\makebox[0.3cm]{$\hat{ \beta}$}} 
&    \raisebox{-1.5cm}{\includegraphics[width=0.55\textwidth]{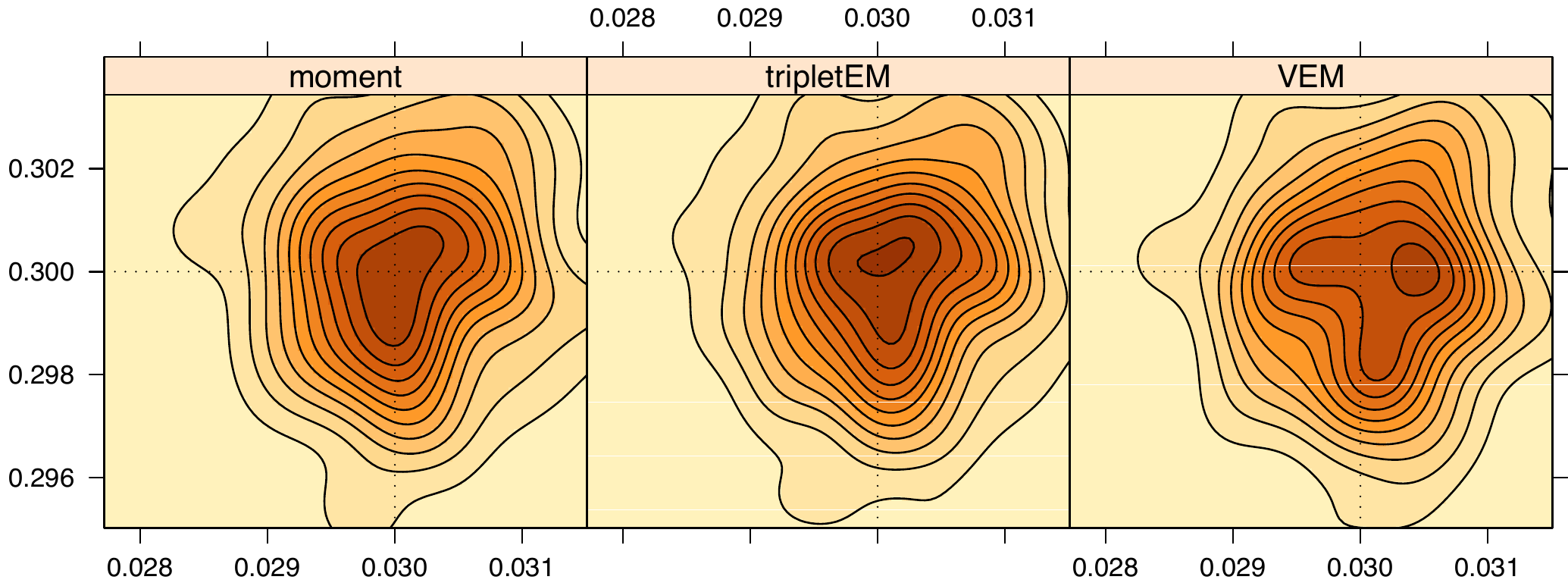}} 
&   \rotatebox{270.0}{\makebox[0.3cm]{Model 2}}\\
&    \raisebox{-1.5cm}{\includegraphics[width=0.55\textwidth]{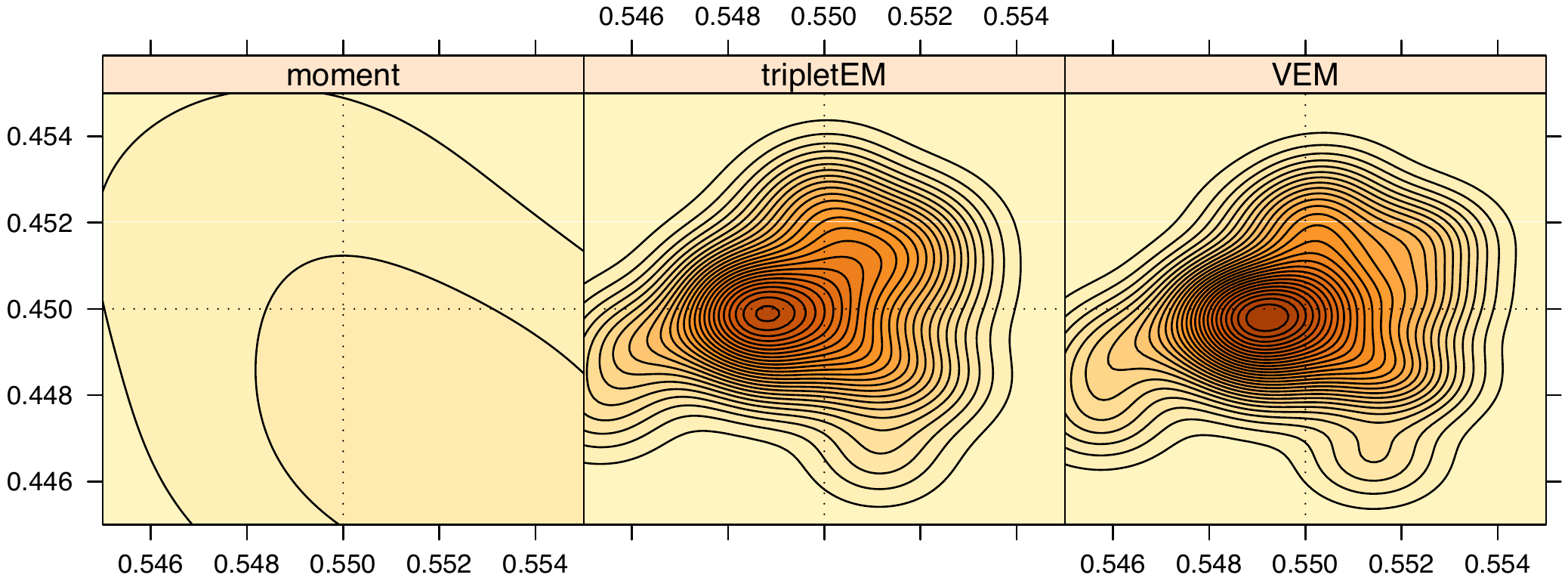}} 
&  \rotatebox{270.0}{\makebox[0.3cm]{Model 3}}\\
& \multicolumn{2}{c}{$\hat{ \alpha}$} 
 \end{tabular}
 \caption{ Empirical joint distribution of the estimators $\hat{\alpha}$ and $\hat{ \beta}$, computed over 100 simulations of graphs with 500 vertices, $Q=2$ groups and equal group proportions. The dotted lines show the true values of $\alpha$ and $\beta$.}
    \label{fig:contourplot}
\end{figure}

Figure~\ref{fig:mean_varplot} (top) displays the estimations of $\alpha$
averaged over 100 graph simulations as a function of the number of graph
vertices in log-scale.
  For all three models, we see that all
algorithms do produce unbiased estimation when the number of vertices
is large enough. In addition to the asymptotically unbiased
estimation, we observe an agreement in the sign of the bias among all
algorithms, when the graphs are small. For example, when estimating
$\alpha$ in model $1$ where $(\alpha=0.3, \beta=0.03)$, all methods
under-estimate  $\alpha$ and over-estimate $\beta$.

\begin{figure}[htbp!]
\centering
 \begin{tabular}{c c c c}
&  Model 1 & Model 2  & Model 3  \\ 
\hspace{-.5cm}\rotatebox{90}{\makebox[0.05cm]{$\hat{ \alpha}$ }} 
&    \raisebox{-2.1cm}{\includegraphics[width=0.3\textwidth]{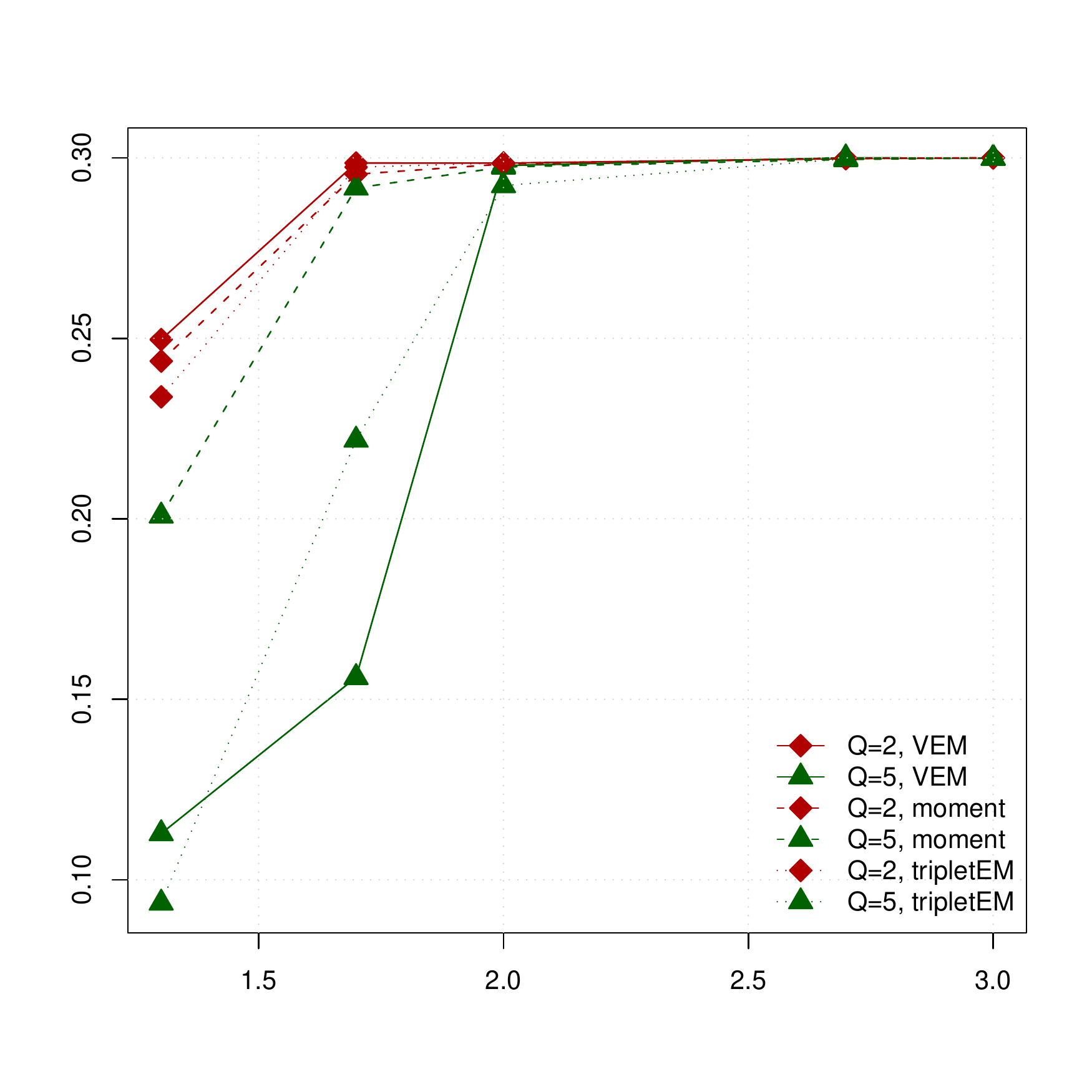}} 
&    \raisebox{-2.1cm}{\includegraphics[width=0.3\textwidth]{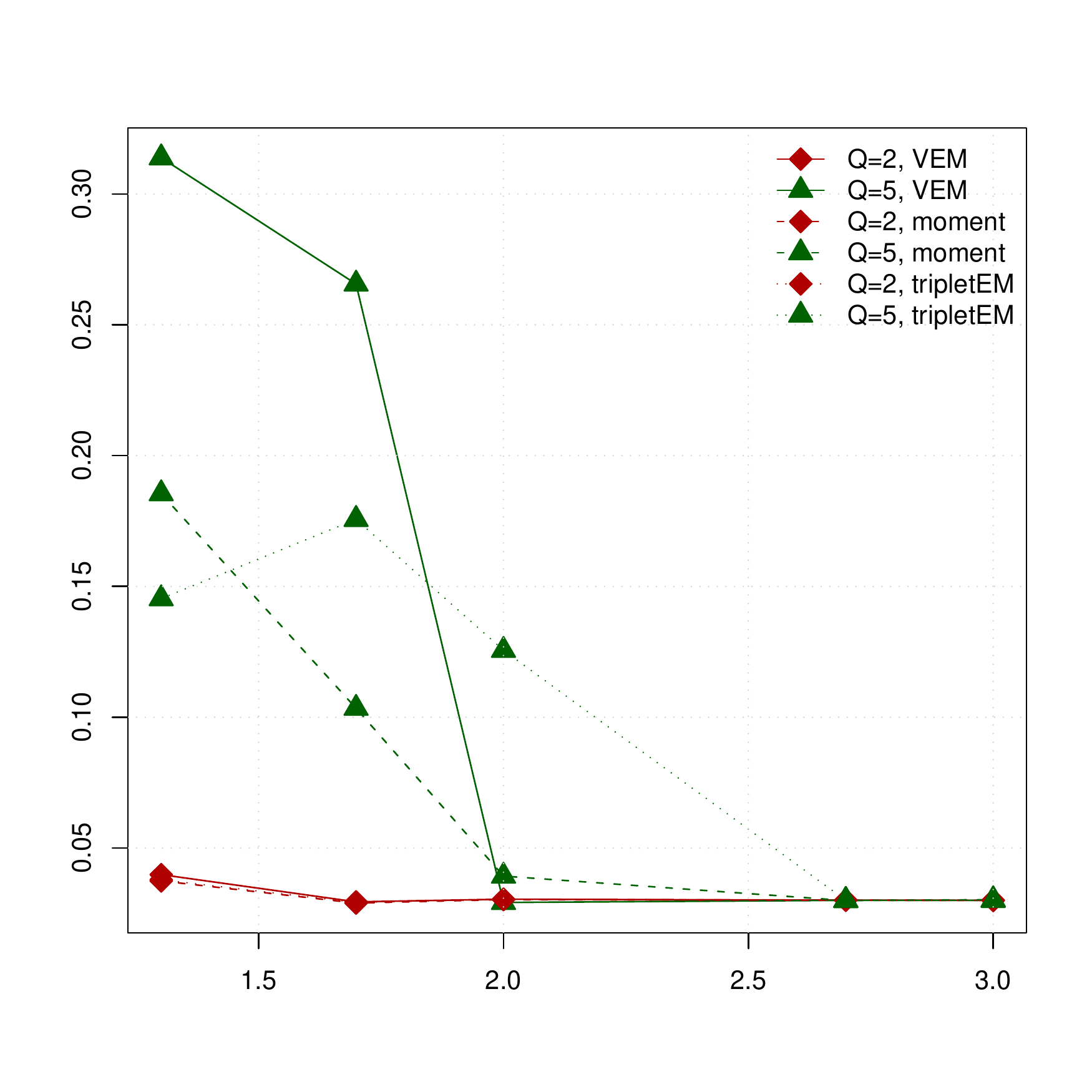}} 
&    \raisebox{-2.1cm}{\includegraphics[width=0.3\textwidth]{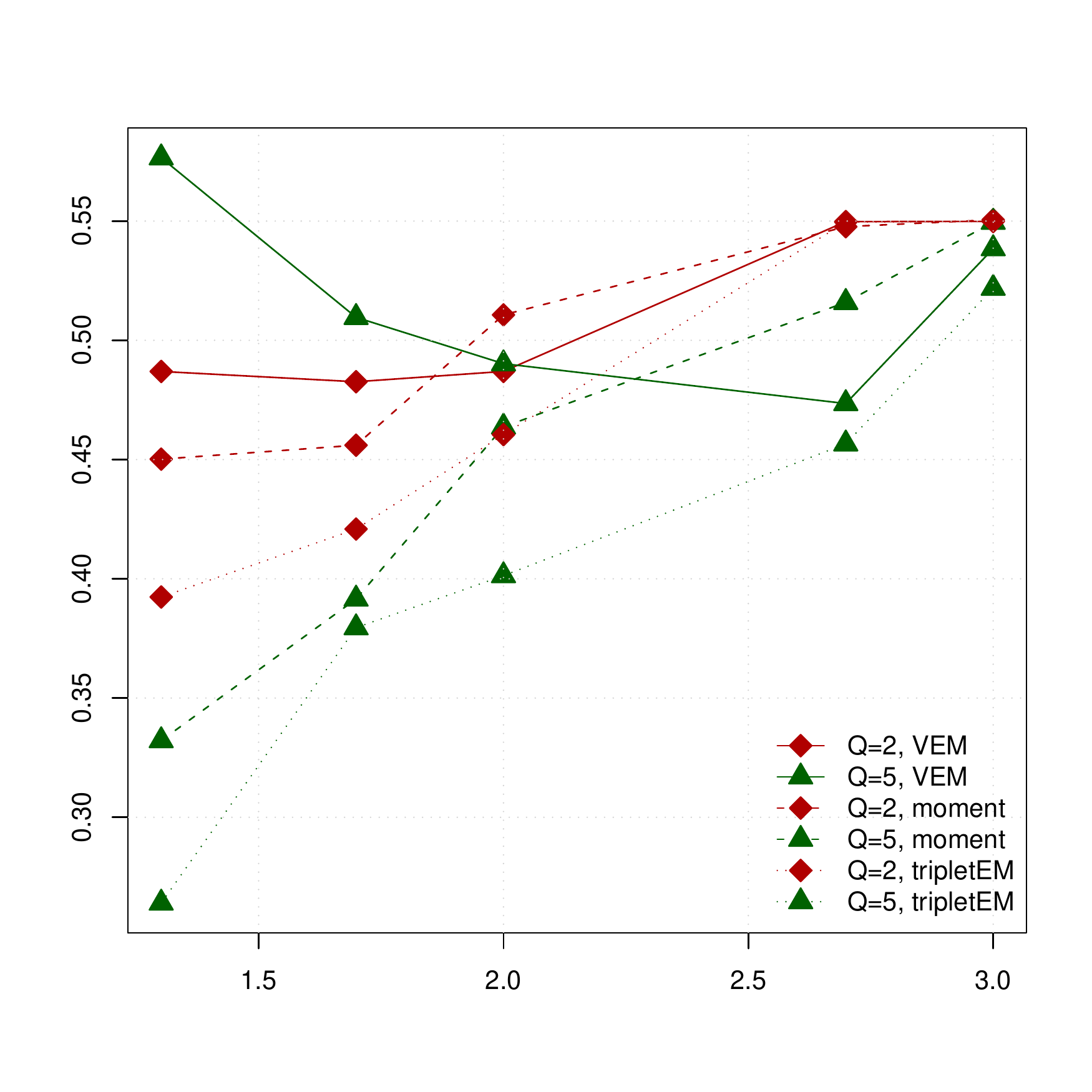}} \\
\hspace{-.5cm}\rotatebox{90}{\makebox[-0.1cm]{$\frac 1 2 \log({\text{EmpVar}(\hat{ \alpha})})$ }} 
&    \raisebox{-2.1cm}{\includegraphics[width=0.3\textwidth]{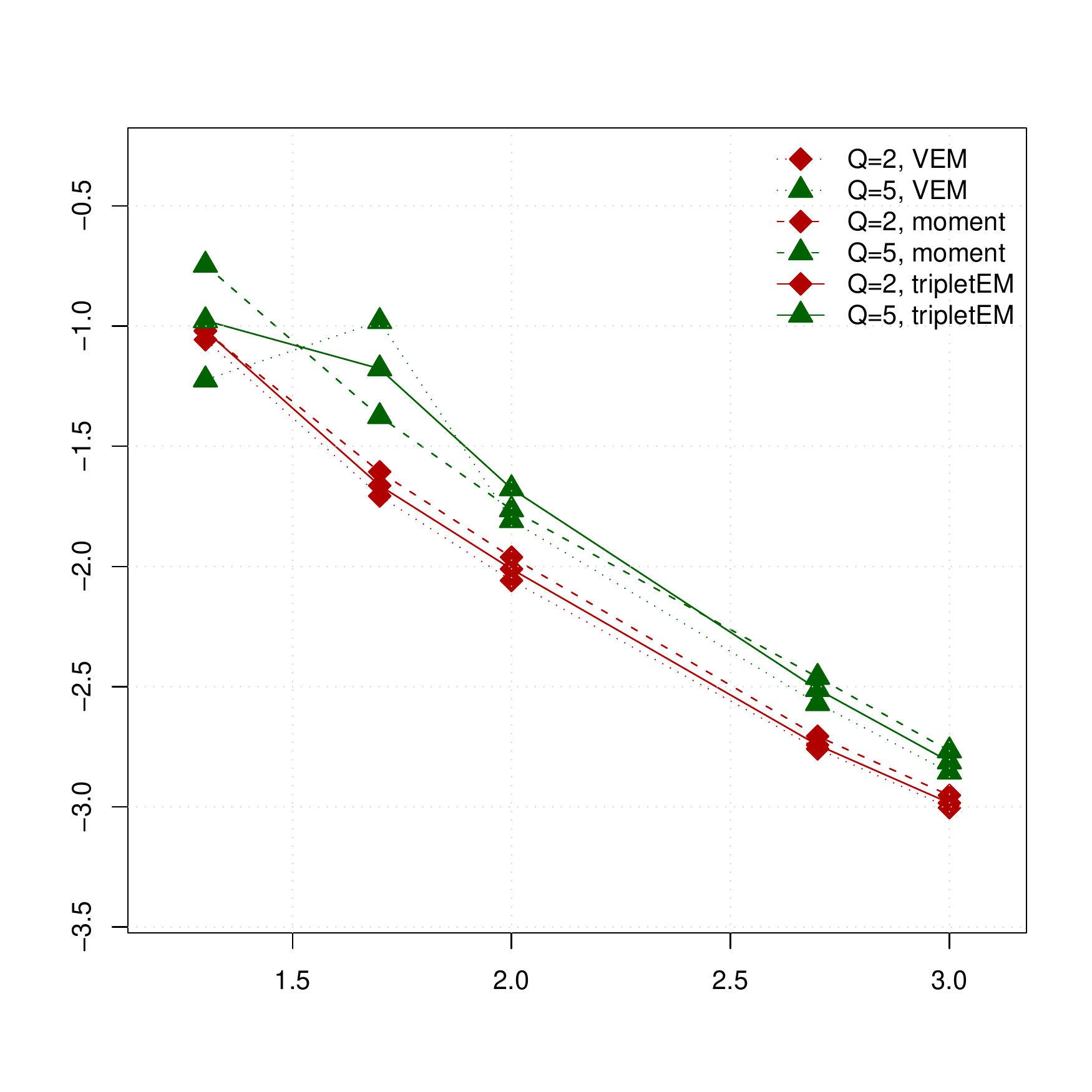}} 
&    \raisebox{-2.1cm}{\includegraphics[width=0.3\textwidth]{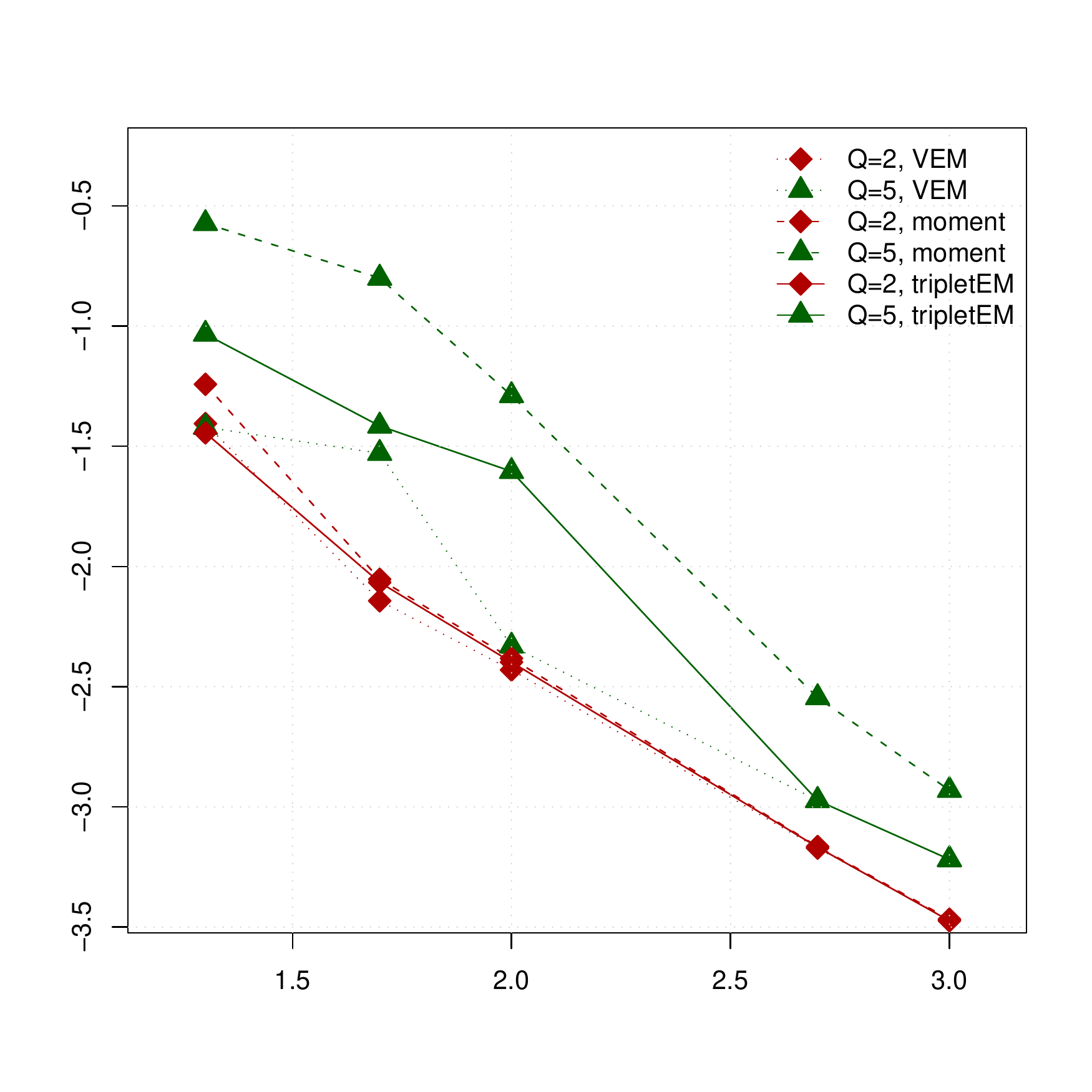}} 
&    \raisebox{-2.1cm}{\includegraphics[width=0.3\textwidth]{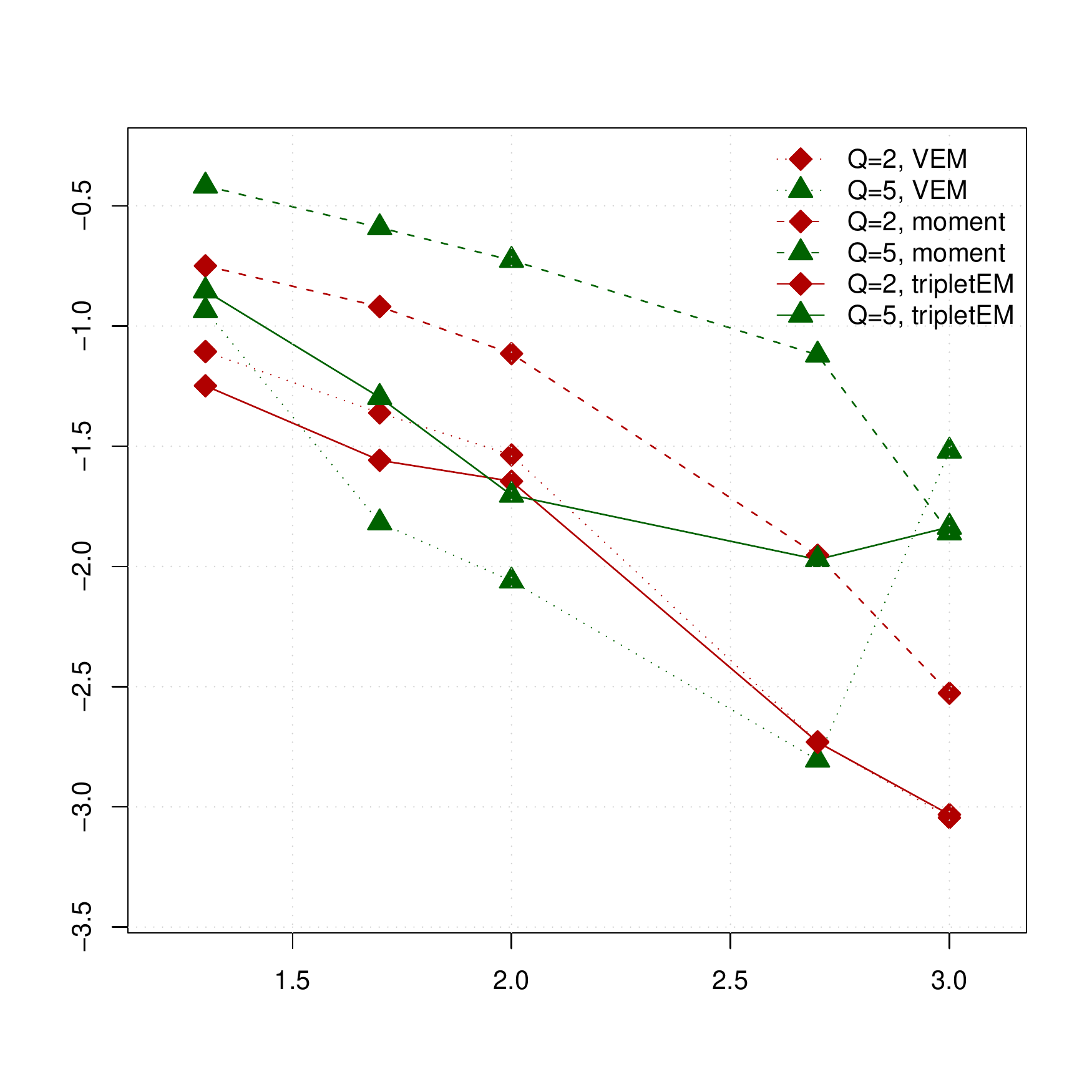}} \\
&  & $\log_{10} n$ & \\
 \end{tabular}
 \caption{$\hat{\alpha}$ (top) and the $\log$ of its empirical standard deviation (bottom) averaged over 100 graph simulations, as functions of the number of graph vertices in log-scale for equal group proportions.}
    \label{fig:mean_varplot}
\end{figure}

In order to compare the dispersion of all estimators, we consider
their empirical standard deviation computed over 100
simulations. Figure~\ref{fig:mean_varplot} (bottom) shows the evolution of the
$\log$ of the empirical standard deviation of $\hat{
  \alpha}$ when the size of the graphs grows from 20 vertices up to
1000 vertices.  We see a linear dependence between the $\log$ of the
graph size and the $\log$ standard deviation. The slope of the lines is about
$-1$ which  indicates that the standard deviation decreases with
rate of the order $1/n$ (where $n$ is the number of vertices of the
graph). The differences between the intercepts relate to constant
factors driving the relations between all rates of convergence. When
$Q=2$, we observe very similar intercepts for all methods, both for models $1$ and $2$. When
$Q=5$, \textsc{vem} appears to  converge faster but the order of the
standard deviations remain comparable among all estimation methods.
For model $3$, the moment based estimations have greater dispersion,
but still decrease with the same rate.

\begin{figure}[htbp!]
\centering
\includegraphics[width=0.9\textwidth,height=0.5\textwidth]{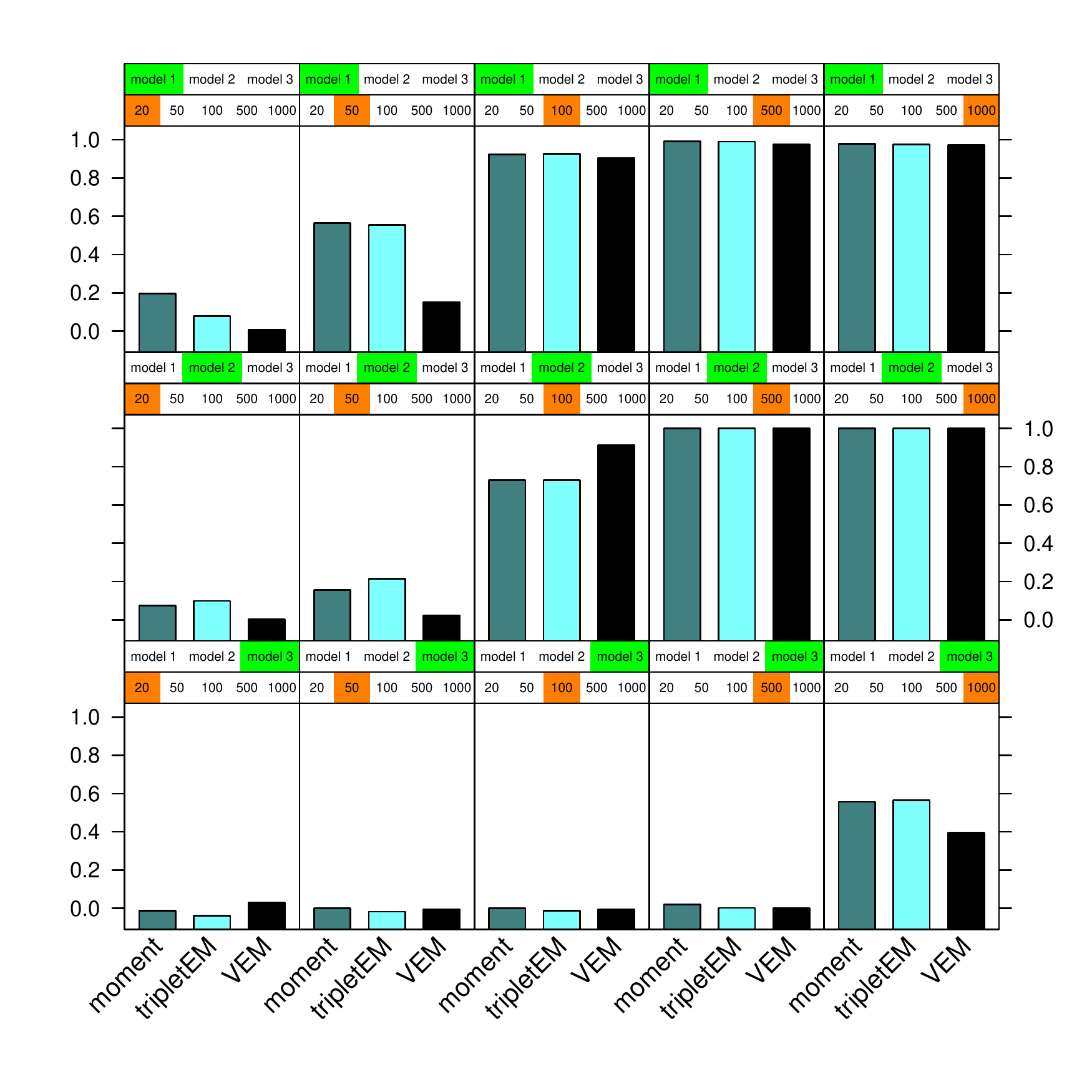} 
\caption{Evaluation of Rand Index to compare true and estimated latent structure of binary affiliation graph.}
\label{fig:randbernoulli}
\end{figure}

We use the adjusted Rand Index \citep{HA1985} to evaluate the
agreement between the estimated and the true latent structure.  The
computation of the Rand Index is based on a ratio between the number
of node pairs belonging to the same and to different classes when
considering the actual latent structure versus the estimated
one. This index lies between 0 and 1, two identical latent structures
having an adjusted Rand Index equal to 1. Figure~\ref{fig:randbernoulli} displays the Rand Index for the three models
and five different graph sizes. It appears that the three algorithms
allow a reasonable recovery of the latent structure, for models 1
and 2, when the considered graphs have more than 100 vertices.  As
expected, the larger the number of nodes, the better the recovery of
the latent structure we observe.
We also notice that our proposed strategy for recovering the latent
structure performs as well or better than the variational approach in
all cases.

The previous experiments show that the two estimation procedures
proposed in this work behave as well or better than the variational
based algorithm, both  for the parameter estimation and the recovery of
the latent structure. Notice also that the moment based method does not
depend on any sort of initialization, since it relies on the analytical
resolution of a simple system based on triads (order 3 structures).

\subsection{Weighted affiliation model}

\paragraph{Simulations set-up.}  In  the following experiments, we use
a sparsity parameter  constant across the graph and  non missing edges
are distributed according to a  Gaussian model as described in Section
\ref{sec:weighted},   with  different   means   $\mu_{\text{in}}$  and
$\mu_{\text{out}}$ and  equal variance $\sigma^2$. The  intricacy of a
model  is inversely related to  the   'distance'    between   the   parameters
$\theta_{\text{in}}$ and $\theta_{\text{out}}$.  We use the  Mahalanobis distance   $\Delta=|(   \mu_{\text{in}}   -   \mu_{\text{out}})/\sigma  |$.  Three  models are  considered with different  levels of   intricacy: we fix $\mu_{\text{in}} =2$ and $\mu_{\text{out}}=1$, thus  $\Delta=|  (\mu_{\text{in}}  -  \mu_{\text{out}})/\sigma  |=1/\sigma$
 which  takes  the values  $\Delta  =10$  (model  A), $\Delta  =2$
 (model B) and $\Delta =1$ (model C). 
We fix the number of groups $Q=2$, equal group proportions and consider different  number of vertices $n \in \{20,100,500,1000\}$. 


\begin{figure}[htbp!]
\centering
 \begin{tabular}{c c c}
\raisebox{-2.1cm}{ \includegraphics[width=0.3\textwidth,height=0.3\textwidth]{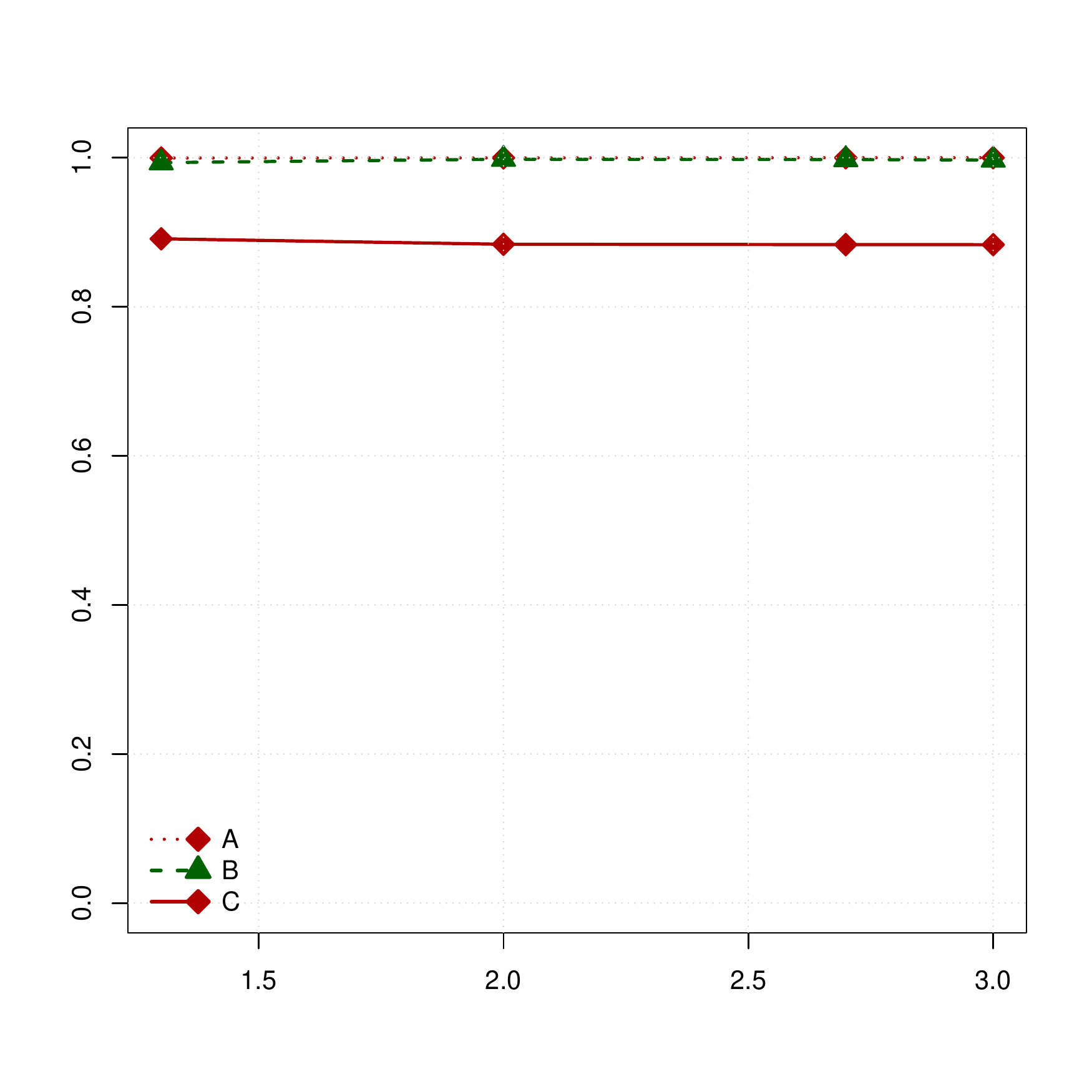}} &
\raisebox{-2.1cm}{ \includegraphics[width=0.3\textwidth,height=0.3\textwidth]{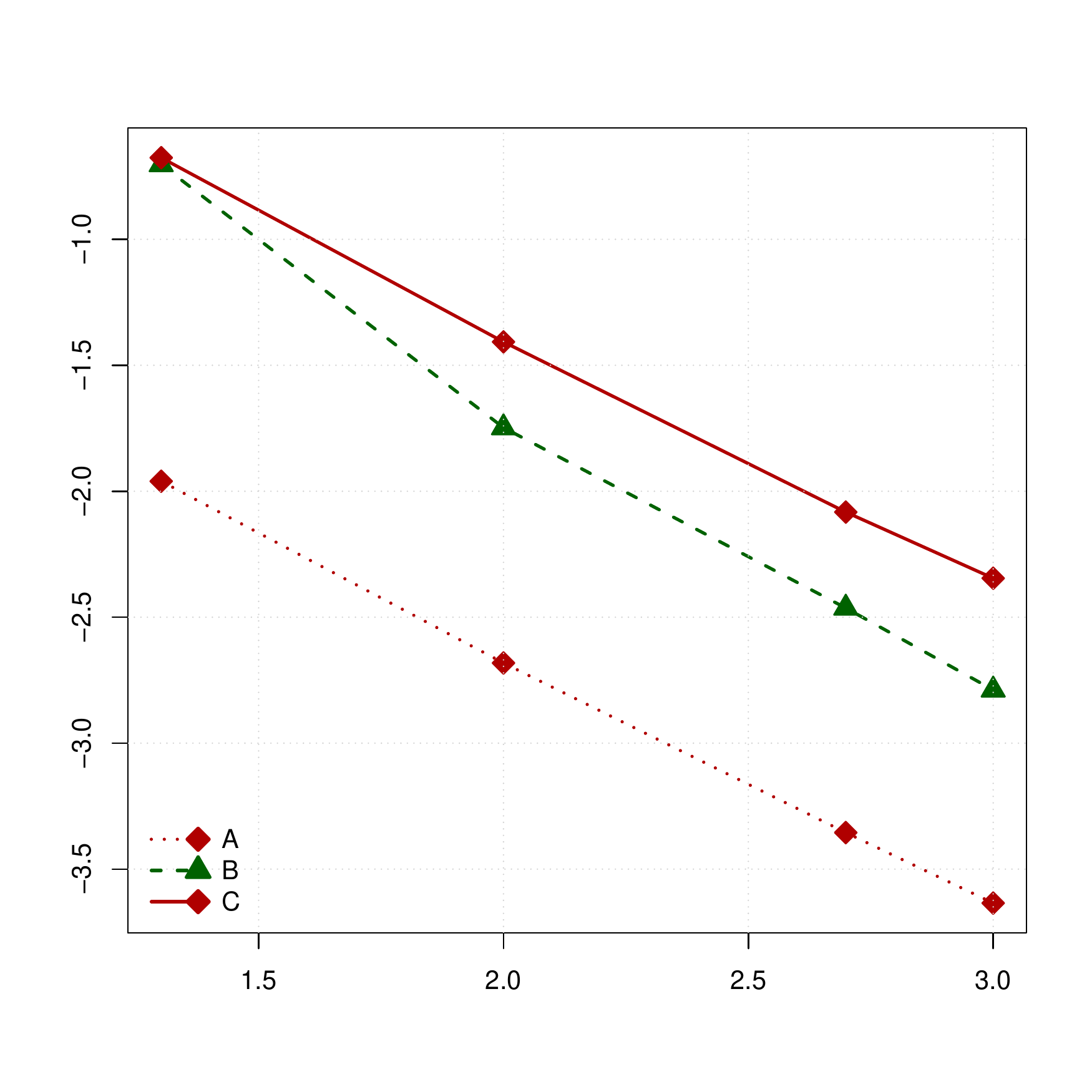}} &
\raisebox{-2.1cm}{\includegraphics[width=0.3\textwidth,height=0.3\textwidth]{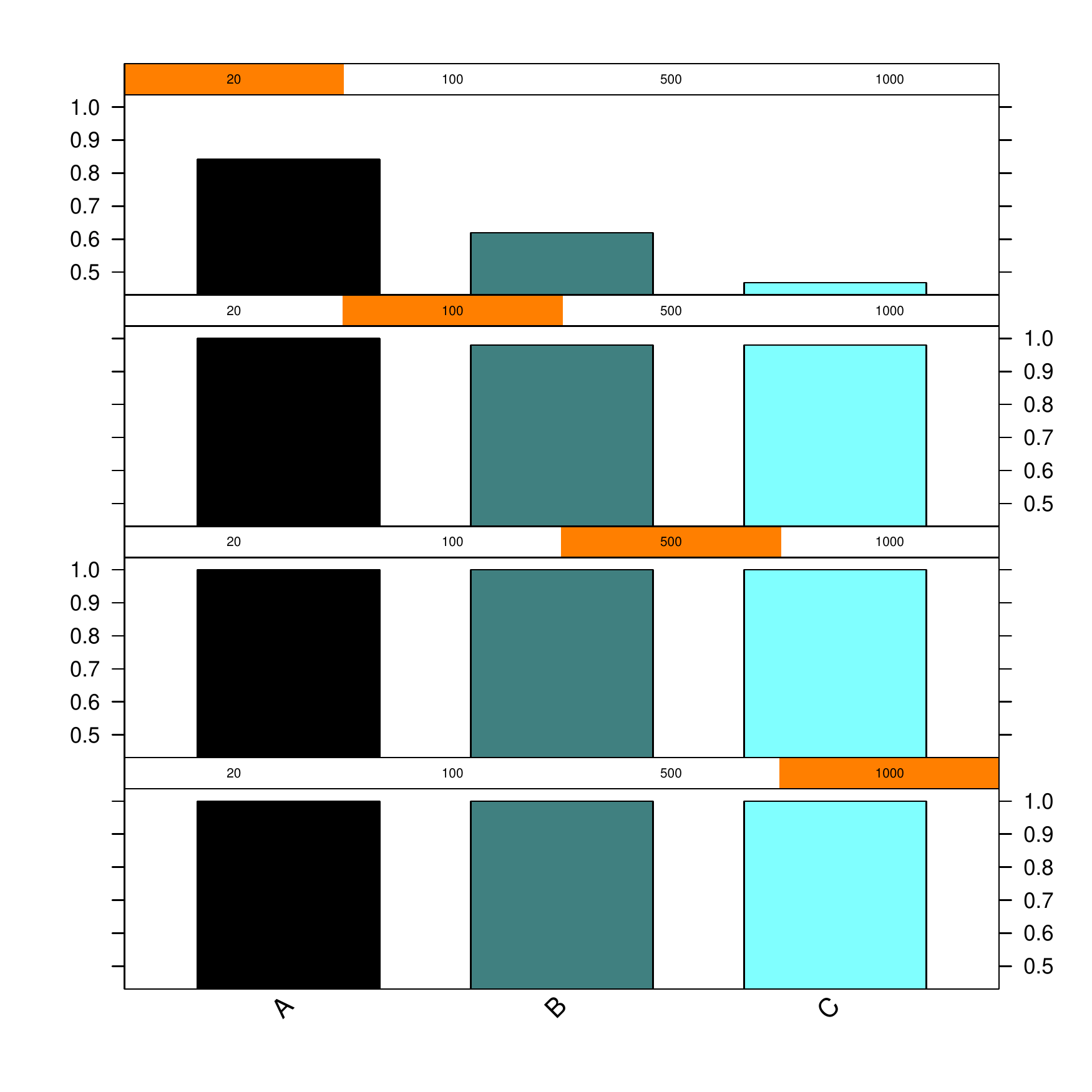}}\\ 
$\hat \mu_{\text in}$ vs $\log_{10} n$&$\frac 1 2 \log(\text{EmpVar}(\hat{\mu}_{in}))$ vs $\log_{10} n$ &\\
 (a)  &  (b) & (c)  
\end{tabular}
\caption{Evaluation of the estimation of the parameters of a weighted graph.
(a) $\hat{\mu}_{\text{in}}$ and (b) $\log$ of its empirical standard
deviation, both as  functions of the number of graph vertices, expressed in log-scale. Each estimation is averaged over 100 graph simulations.
(c) Rand Index computation comparing  the true latent structure to the
estimated one for the three models (columns A, B, C) and four graph sizes (rows $n=20, 100, 500, 1000$).}
\label{fig:weighted}
\end{figure}

We computed bias and empirical standard deviations over 100 simulations.
As illustrated by Figure~\ref{fig:weighted}(a) in the case of $\hat \mu_{\text{in}}$, the method recovers the parameters with no bias, except for model C where a small bias occurs due to the high level of intricacy of the model. 
Figure~\ref{fig:weighted}(b) displays the evolution of the $\log$ of the
empirical standard deviation of $\hat{\mu}_{\text{in}}$ when the size of the
graphs grows from 20 vertices up to 1000 vertices.  As for the binary
affiliation model estimators, we observe a linear dependence between the $\log$ of the graph size and the $\log$ standard deviation, the slope of the lines lying
in  $[-1/2,-1]$.

Figure~\ref{fig:weighted}(b) displays  the Rand  Index for  the three
different models (A,B,C) and four different graph sizes. When graphs have more than 100 nodes, recovery of the hidden structure is almost perfect in all situations as previously observed in the binary case. 

The previous experiments show  that when dealing with weigthed affiliation graphs, the estimation of   the parameters and of  the graph latent structure can be efficiently  achieved considering only edges (order 2 structures).

\subsection{Cross-citations of economics journals}

Let us  illustrate the difference  between weighted and  binary models
for  graph   clustering  using  a  real  data   example.  We  consider
cross-citations  of 42  economics  journals over  the years  1995-1997
\citep{PB2002}.  The raw data corresponds to a weighted non symmetric graph where vertices are
journals and directed edges the number of citations from one  journal to another one.  We first  take the mean value of citations  between each pair of
journals (leading to a symmetric adjacency matrix) and work with its 
normalized Laplacian.  
 Figure~\ref{fig:crosscitations}    displays    the
affiliation  matrices  structured according  to  a  partition in  four
classes.  Clustering based  on the  binary model  and on  the weighted
model      (respectively     left      and     right      sides     of
Figure~\ref{fig:crosscitations})   exhibit   very  different   cluster
structures.  The   binary  model  finds  classes  which   tend  to  be
homogeneous  in   terms  of  probability  of   intra-group  and  inter-group
connections,  while  the  weighted   model  finds  classes  which  are
homogeneous in terms of intra-group and inter-group connection weights. This distinction results in completely different interpretations.

\begin{figure}[htbp!]
\centering
\begin{tabular}{c c}
\includegraphics[width=0.4\textwidth,height=0.4\textwidth]{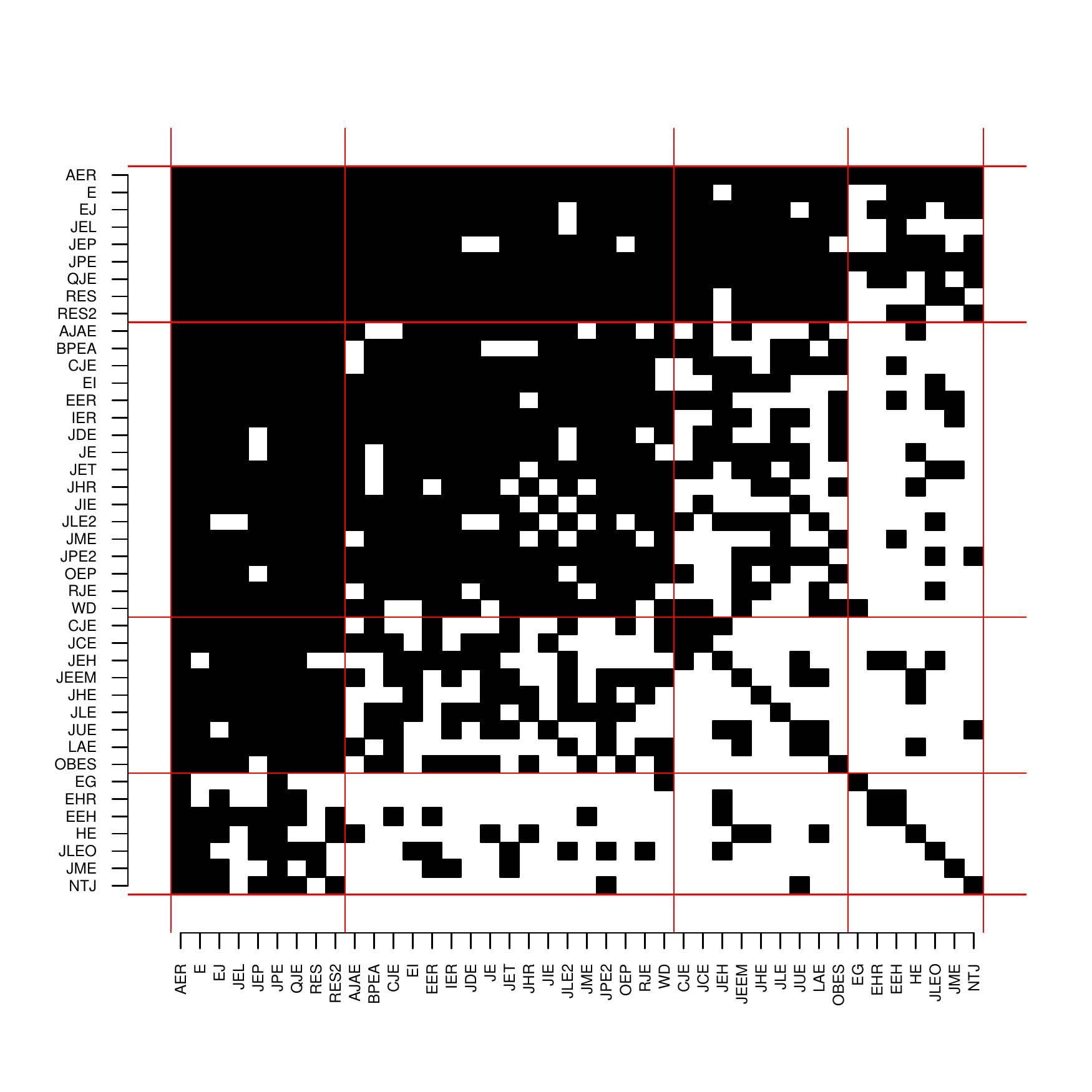}  &
\includegraphics[width=0.4\textwidth,height=0.4\textwidth]{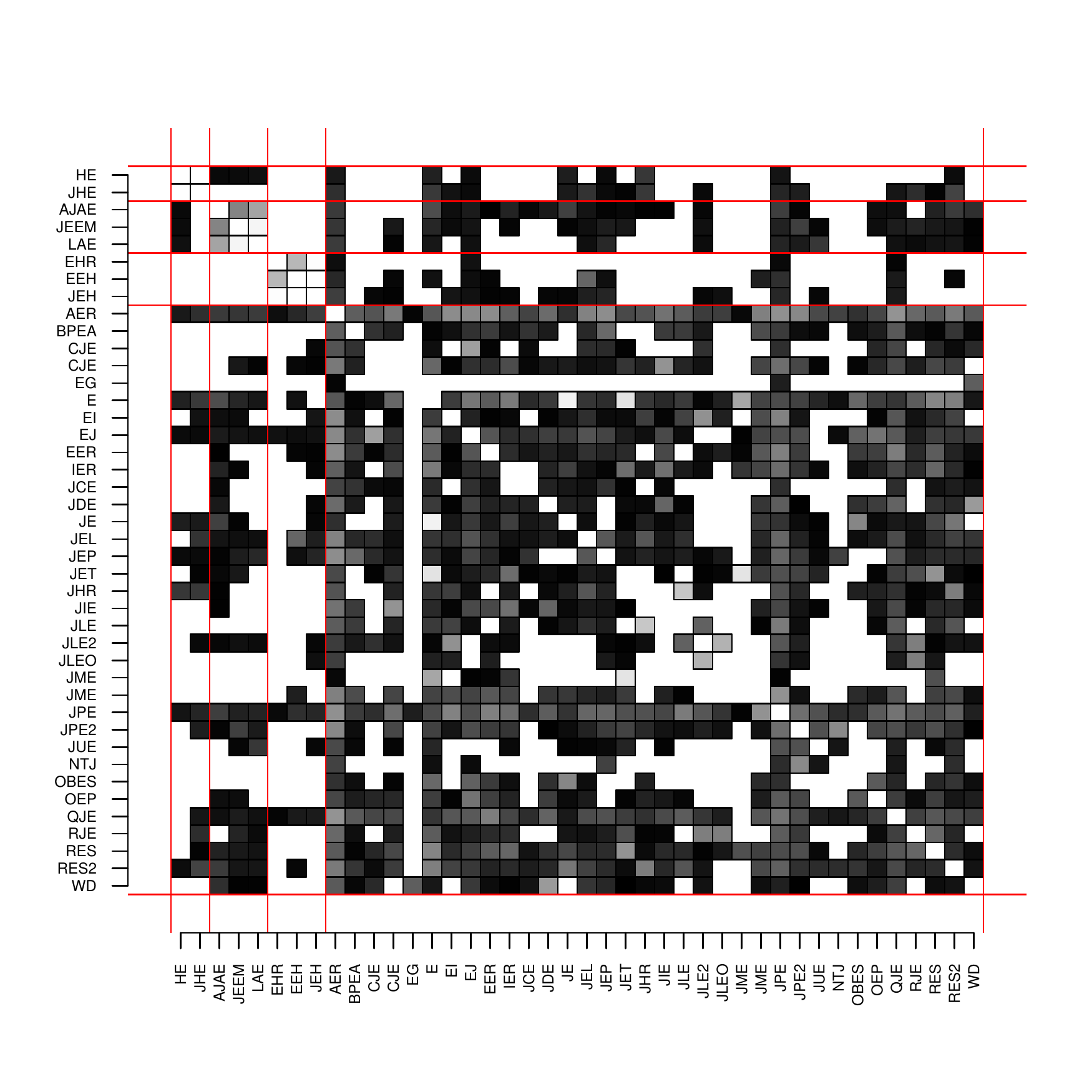}\\
(a) & (b) 
\end{tabular}
\caption{Matrices  of cross-citations  between  42 economics  journals
  with rows and columns reorganized according to groups found by the binary random graph
  mixture model (a) compared to  groups found with the weighted random
  graph mixture model (b).}
\label{fig:crosscitations}
\end{figure}

The binary model finds two groups of nodes which are strongly connected within
their  groups but  also  with nodes  from  the other  groups. It  also
exhibit  two   other  smaller    classes  with  low
intra-group  connectivity and  nodes that  preferentially link  to the
first  class which plays  the role  of a  reference class.  Indeed the
first  class (top  left)  found by  the  binary model  is composed  by
journals  with high  impact factors:  American Economic  Review (AER),
Econometrica (E),  Journal of Economic Literature (JEL), Journal of
Economic  Perspectives  (JEP),  Journal  of Political  Economy  (JPE),
Quarterly Journal of Economics (QJE), Review of Economic Studies (RES)
and Review of Economics and Statistics (RES2).

The result produced by the weighted model shows a main  class of strongly interconnected journals and  three smaller classes of journals, which  weakly cross-cite each other:
\begin{description}
\item[Class 1] (health)   Health Economics (HE), Journal of Health Economics (JHE);
\item[Class 2] (natural resources)  Journal of Agricultural Economics (AJAE), Land Economics (LAE),
  Journal of Environmental Economics and Management (JEEM);
\item[Class 3] (economic history) Exploration of Economic History (EEH), Journal of Economic History
  (JEH), Economic History Review (EHR).
\end{description}
Each  of these  three classes  is  composed by  journals dedicated  to
similar topics  (respectively, health, natural  resources and economic
history). They preferentially cite journals from the first class which
contains journals with less specific topics.

\section{Proofs}\label{sec:proofs}
\begin{proof}[Proof of Theorem~\ref{thm:main}]  In order to facilitate
  the reading of the proof, we decompose it into several stages. 

\paragraph*{Preliminaries.}
We fix $k,s\ge 1$ and $p = \binom k 2$.
Let us recall that   $\mathcal{V}_Q$ is the set  of $Q$-size vectors  such that for
any $v=(v_1,\ldots, v_q) \in  \mathcal{V}_Q$, we have $v_i\in \{0,1\}$
and $\sum_{i=1}^Q v_i=1$. We also let $\Q=\{1,\ldots, Q\}$. 
We then consider the set 
\begin{equation*}
\mathcal{Z}=\Big\{z\in     \mathcal{V}_Q^{\mathbb{N}}    ;    \forall
\uq=(q_1,\ldots, q_k)\in \Q^k, \; \frac{ (n-k) !  }{n!}n_{\uq} :=\frac {(n-k)!}
{n!} \sum_{\ui \in \Ik} \prod_{l=1}^k z_{i_lq_l}
\mathop{\to}_{n \to \infty} \prod_{l=1}^k \pi_{q_l} \Big\}.  
\end{equation*}
Moreover, we let 
$N_{\uq} =\sum_{\ui\in \Ik} \prod_{l=1}^k Z_{i_lq_l}$. 
The strong law of large  numbers gives the almost sure convergence, as
$n$   tends   to   infinity,   of   $ [(n-k)   !/   n! ]  N_{\uq}  $   to
$\prod_{l=1}^k\pi_{q_l}$.  
This implies that  $ \mathbb{P}(\{Z_n\}_{n\ge 1}\in \mathcal{Z}) =1$. 

\paragraph*{Consistency of $\hat m_{g}$. }
We first introduce the conditional mean of $g(\bX^{\ui})$ given that the hidden groups at position $\ui$ are given by $\uq$
\begin{equation*}
   m_g(\uq)=\mathbb{E}\Big(g(\bX^{\ui}) \Big| \prod_{l=1}^k Z_{i_lq_l}=1\Big) .
\end{equation*}
Using the equalities 
\begin{equation}
\forall \ui  \in \Ik,  \sum_{\uq \in \Q^q}  \prod_{l=1}^k Z_{i_lq_l}=1
\quad \text{ and }  \quad m_{g}=\sum_{\uq \in \Q^k} \Big (\prod_{l=1}^k
\pi_{q_l} \Big) m_g(\uq), 
\label{eq:cles}
\end{equation}
  we may write  the decomposition 
\begin{multline}
 \hat m_{g} -m_{g}
=   \frac{(n-k)!}{n!}   \sum_{\uq    \in   \Q^k}\sum_{\ui   \in   \Ik}
\prod_{l=1}^k   Z_{i_lq_l}  g(  \bX^{\ui})   -\sum_{\uq  \in   \Q^k  }
\Big(\prod_{l=1}^k \pi_{q_l} \Big) m_g(\uq) \\
=  \sum_{\uq  \in \Q^k}  \Big[  \frac{(n-k)!}{n!}  \sum_{\ui \in  \Ik}
\Big (\prod_{l=1}^k Z_{i_l q_l} \Big)(g(\bX^{\ui})-m_g(\uq)) 
+m_g(\uq) \Big( \frac{(n-k)!  }{n!} N_{\uq} -\prod_{l=1}^k \pi_{q_l}\Big) 
\Big] . \label{eq:decompose}
\end{multline}

In order  to establish the consistency  of $\hat m_{g}$, we  rely on a
conditioning argument.  Let $A$ be  the event $  \limsup_{n\to \infty}
|\hat m_{g}-m_{g}|=0 $. We then have 
\begin{equation}
  \mathbb{P}(A ) = \mathbb{E}[\mathbb{E} (1_A| \{Z_n\}_{n\ge 1})] . 
\label{eq:dominated}
\end{equation}
Now,  conditional  on  $\{Z_n\}_{n\ge  1}=z $,  the  random  variables
$\{\bX^{\ui};  \ui \in  \Ik, \prod_{l=1}^k  z_{i_lq_l} =1  \}$  form a
$n_{\uq}$-sample of independent and identically distributed random variables. Letting $B$ be the event 
\begin{equation*}
  \limsup_{n\to\infty} \frac  1 {N_{\uq}}\Big| \sum_{\ui \in
      \Ik ; \prod_{l=1}^k Z_{i_lq_l} =1}  (g(\bX^{\ui})-m_g(\uq)) \Big| = 0,
\end{equation*}
 the strong law of large numbers yields that for any $z\in \mathcal{Z}$,
\begin{equation*}
  \mathbb{E}(1_B |\{Z_n\}_{n\ge 1} =z) =1 .
\end{equation*}

Conditional on  $\{Z_n\}_{n\ge 1}=z \in \mathcal{Z}$, we may thus re-write the decomposition \eqref{eq:decompose} as  
\begin{multline*}
  \hat  m_{g}  -m_{g}=  \sum_{\uq  \in  \Q^k }  \Big[  \frac{
    (n-k)!}{n! }n_{\uq  } \times\frac  1 {n_{\uq}}\sum_{\ui \in \Ik ;
      \prod_{l=1}^k z_{i_lq_l} =1} (g(\bX^{\ui})-m_g(\uq)) \\
+m_g(\uq) \Big( \frac{ (n-k) !} {n!}n_{\uq} -\prod_{l=1} ^k \pi_{q_l}\Big) 
\Big] ,
\end{multline*}
which establishes that for any $z \in \mathcal{Z}$, we have 
$  \mathbb{E}(1_A | \{Z_n\}_{n\ge 1}=z) =1$. 
Coming back to \eqref{eq:dominated}, we thus obtain 
\begin{equation*}
  \mathbb{P}(\lim_{n\to \infty}\hat m_{g} =m_{g}) =1.
\end{equation*}

\paragraph*{Asymptotic normality of $\hat m_g$.} Let us now prove a central limit result for $\sqrt{n}(\hat m_{g}-m_{g})$. 
First, the central limit theorem applied to the $Q$-size vector $\sum_{i=1}^n (Z_i-\boldsymbol{\pi})/\sqrt{n}$ gives the following convergence
\begin{equation}
  \frac 1 {\sqrt{n}} \sum_{i=1}^n (Z_i-\boldsymbol{\pi}) \leadsto \mathcal{N}(0,\Sigma), \text{ as } n \to \infty, \label{eq:cv_vector}
\end{equation}
where $\Sigma_{qq}=\pi_q(1-\pi_q)$ and $\Sigma_{q\ell}=-\pi_q\pi_\ell$
when $q\neq \ell$.

Now, let us consider the second  term appearing in the right hand side
of  \eqref{eq:decompose}. To  establish  a central  limit theorem  for
$N_{\uq}$, we decompose the sum of products 
$$
\sum_{\ui \in  \I_k} \prod_{l=1}^k Z_{i_l  q_l} = \sum_{\ui  \in \I_k}
\prod_{l=1}^k ( Z_{i_l q_l} -\pi_{q_l} +\pi_{q_l})
$$
into   sums   of   products   of   centered  terms   $  ( Z_{i_l   q_l}
-\pi_{q_l})$. This leads to 
\begin{equation*}
  \frac{ (n-k) ! }{n !}N_{\uq} -\prod_{l=1}^k \pi_{q_l} = \sum_{u=1}^k 
\sum_{L\subset     \{1,\ldots,k\};    |L|=u}     \frac    {(n-u)!}{n!}
\Big(\prod_{l\notin L} \pi_{q_l} \Big) \sum_{ \ui \in \I_L} \prod_{l
  \in L} (Z_{i_lq_l} -\pi_{q_l}) ,
\end{equation*}
where $|L|$ denotes the cardinality  of the set $L$ and $\I_L$ denotes
the set  of injective  maps from $L$  to $\I=\{1,\ldots,n\}$.  In this
expression,  the  leading  term  (obtained  for  singleton  sets  $L$,
\emph{i.e.} when $u=1$)  gives the rate of convergence  in the central
limit theorem. In other words,
\begin{multline}
 \sqrt{n} \left(\frac{  (n-k) ! }{n  !} N_{\uq}-\prod_{l=1}^k \pi_{q_l}
 \right)= \sum_{l=1}^k \Big(\prod_{j\neq l} \pi_{q_j} \Big ) \frac 1 {\sqrt n }\sum_{i=1}^n (Z_{iq_l}-\pi_{q_l})\\+\sum_{u=2}^k 
\sum_{L\subset \{1,\ldots,k\}; |L|=u} \frac {\sqrt n (n-u)!}{n!}
\Big(\prod_{l\notin L} \pi_{q_l} \Big) \sum_{ \ui \in \I_L} \prod_{l
  \in L} (Z_{i_lq_l} -\pi_{q_l} ) . \label{eq:terme_domi}
\end{multline}
The  first  term  in  the  right hand  side  of  \eqref{eq:terme_domi}
converges   to  a  linear   combination  of   the  coordinates   of  a
$\mathcal{N}(0,\Sigma)$ vector,  whereas the second  term converges to
zero. Indeed, for  any value $u \ge 2$ and any  set $L$ of cardinality
$u$, we may write 
$$
\frac {\sqrt n (n-u)!}{n!} \sum_{ \ui \in \I_L} \prod_{l
  \in  L} (Z_{i_lq_l}  -\pi_{q_l} )  =  \frac 1  {\sqrt n  (n-1)\cdots
  (n-u+1)} \sum_{ \ui \in \I_L} \prod_{l
  \in L} (Z_{i_lq_l} -\pi_{q_l} )
$$
which converges to zero. 
Thus we get, 
\begin{equation*}
\sqrt{n}   \sum_{\uq   \in   \Q^k}   m_g(\uq)  \Big[   \frac{(n-k)   !
  }{n! }N_{\uq} -\prod_{k=1}  ^l \pi_{q_l} \Big] =  \sum_{\uq \in \Q^k}
m_g(\uq) \Big[ \sum_{l=1}^k 
\frac{\prod_{j\neq  l  }\pi_{q_j}  }{\sqrt{n}} \sum_{i=1}^n  (Z_{iq_l}
-\pi_{q_l}l) + R_{n,\uq} \Big] , 
\end{equation*}
where   $R_{n,\uq}=o_P(1)$   are   negligible  terms   converging   in
probability to zero, as $n$ tends to infinity. According to \eqref{eq:cv_vector}, we obtain that 
\begin{equation*}
  \sqrt{n}  \sum_{\uq   \in  \Q^k}  m_g(\uq)  \Big[   \frac{  (n-k)  !
    }{n!}N_{\uq}-\prod_{l=1} ^k \pi_{q_l}\Big] \mathop{\leadsto}_{n\to
    \infty}\sum_{\uq\in \Q^k } m_g(\uq) \big[\sum_{l=1}^k \big(\prod_{j\neq
    l }\pi_{q_j}\big)W_{q_l} \big] , 
\end{equation*}
where $W=(W_1,\ldots,W_Q)\sim\mathcal{N}(0,\Sigma)$.

To obtain a central limit theorem for $\hat m_g$ it now suffices to prove that the first term in the right hand side of \eqref{eq:decompose} is negligible, when scaled by the rate of convergence $\sqrt{n}$.  
Indeed, we may write this term as 
\begin{equation*}
\tilde R_n  = \sum_{\uq \in  \Q^k } \left(\frac{(n-k)!}{(n-1)!}\right)
^{1/2} \times 
\left(\frac{(n-k)   ! }{n   !  } N_{\uq}\right)^{1/2}   \times\frac  1
{\sqrt{N_{\uq}}}\sum_{\ui \in \Ik ; \prod_{l} Z_{i_lq_l} =1}  (g(\bX^{\ui})-m_g(\uq)), 
\end{equation*} 
which satisfies, for any $k\ge 2$,  any $\epsilon >0$ and any $z\in \mathcal{Z}$, 
\begin{equation*}
  \mathbb{P}(|\tilde R_n|\ge \epsilon| \{Z_n\}_{n\ge 1}=z) \to_{n\to \infty} 0.
\end{equation*}
Using dominated convergence, we also have $\mathbb{P}(|\tilde R_n|\ge \epsilon) \to_{n\to \infty} 0$, for 
 any $\epsilon >0$. Now, going back to \eqref{eq:decompose}, we finally obtain 
\begin{equation*}
  \sqrt{n}(\hat m_{g}-m_{g}) 
\mathop{\leadsto}_{n\to \infty}
\sum_{\uq\in \Q^k } m_g(\uq) \big[\sum_{l=1}^k \big(\prod_{j\neq
    l }\pi_{q_j}\big)W_{q_l} \big] \sim \mathcal{N}(0,\Sigma_g).
\end{equation*}

\paragraph*{Expression for the limiting variance $\Sigma_g$.}
The computation of the variance $\Sigma_g$ could be done using the above expression, but this leads to tedious formulas. A simpler expression of the limiting variance is obtained in the following way. We  prove that $\sqrt{n}U_n:=\sqrt{n}(\hat m_{g}-m_{g})$ has a bounded third order moment. This is sufficient to claim that  $\Sigma_g$ can be obtained as the limiting variance of $\sqrt{n}U_n$.  

First, since non adjacent edges form independent variates, 
it is easy to see that we have
\begin{equation*}
  \mathbb{E}(\|\sqrt{n}U_n\|^3)\le  \left(\frac {(n-k)!}{\sqrt{n}(n-1)!}\right)^3 
\sum_{\ui,\underline{\mathfrak j},\underline{\mathfrak k} ; \ui \cap \underline{\mathfrak j} \cap \underline{\mathfrak k } \neq \emptyset} \esp( \|g(\bX^{\ui})-m_g\|\|g(\bX^{\underline{\mathfrak j}})-m_g\|\|g(\bX^{\underline{\mathfrak k}})-m_g\|) ,
\end{equation*}
where $\ui \cap \underline {\mathfrak j}$ stands for the intersection of $\ui$ and $\underline {\mathfrak j}$ viewed as index sets (instead of $k$-tuples). 
The above sum contains at most $k^2n[(n-1)\cdots(n-k+1)]^3$ terms, which are bounded (there are finitely many of them). Thus this quantity converges to zero as $n$ tends to infinity. Moreover,
\begin{equation*}
  \Var(\sqrt{n}U_n)= \left(\frac {(n-k)!}{\sqrt{n}(n-1)!}\right)^2 
\sum_{\ui,\underline{\mathfrak j} ; \ui \cap \underline{\mathfrak j} \neq \emptyset}
 \Cov(g(\bX^{\ui}),g(\bX^{\underline {\mathfrak j}}))  .
\end{equation*}
The above sum may be decomposed according to the cardinality of the set $\ui \cap \underline{\mathfrak j} $. It is then easy to see that the dominating term is obtained when $|\ui \cap \underline{\mathfrak j}|=1 $, while the other terms converge to zero. Namely 
\begin{equation*}
  \Var(\sqrt{n}U_n)= \left(\frac {(n-k)!}{\sqrt{n}(n-1)!}\right)^2 
\sum_{\ui,\underline{\mathfrak j} ;|\ui \cap \underline{\mathfrak j}|=1 }
 \Cov(g(\bX^{\ui}),g(\bX^{\underline {\mathfrak j}}))  +o(1) .
\end{equation*}
To describe all the possible configurations where  $|\ui \cap \underline{\mathfrak j}|=1 $, we may fix the first index $\ui$ to $(1,\ldots,k)$ and let the second index $\underline{\mathfrak j}$ describe the set of indexes where some position $s$ takes one of the values $\{1,\ldots,k\}$ (corresponding to the intersection $\ui \cap \underline{\mathfrak j}$) and at any other position, there is some value in $\{k+1,\ldots,n\}$. For any $s,t\in \{1,\ldots,k\}$, we thus let  $\underline{e}_s^t\in \I_k$ satisfying $\underline{e}_s^t(s)=t$ and $\underline{e}_s^t(j)\in \{k+1,\ldots,n\}$ for any $j\neq s$.
With this notation,  we obtain 
\begin{equation*}
 \Sigma_g= \lim_{n\to\infty} \Var(\sqrt{n}U_n)= \sum_{s=1}^k\sum_{t=1}^k \Cov(g(\bX^{(1,\ldots,k)}),g(\bX^{\underline{e}_s^t})).
\end{equation*}
Note that in the  case of an affiliation structure with equal group proportions, we could prove from this expression that $\Sigma_g=0$ (using for instance the results of Lemma~\ref{lem:degeneracy} presented below). Anyway this will be a consequence of the following developments.

\paragraph*{The degenerate case.}
Let us now finish this proof by considering the specific case where we have an affiliation structure \eqref{eq:model_affil} and equal group proportions \eqref{eq:model_equalgroup}. Coming back to \eqref{eq:decompose}, we write $ \hat m_g -m_g= T_1+T_2$ where 
\begin{eqnarray*}
 T_1&=&   \sum_{\uq  \in \Q^k}    \frac{(n-k)!}{n!}  \sum_{\ui \in  \Ik}
\Big (\prod_{l=1}^k Z_{i_l q_l} \Big)(g(\bX^{\ui})-m_g(\uq)), \\
 T_2&=&    \sum_{\uq  \in \Q^k} 
m_g(\uq) \Big( \frac{(n-k)!  }{n!} N_{\uq} -\prod_{l=1}^k \pi_{q_l}\Big) .
\end{eqnarray*}
We first deal with the second term $T_2$. According to \eqref{eq:terme_domi}, we have
\begin{multline*}
T_2=   \sum_{\uq  \in \Q^k} m_g(\uq) 
 \sum_{l=1}^k  \frac 1 {Q^{k-1} }\frac 1 n \sum_{i=1}^n (Z_{iq_l}-\pi_{q_l})\\+\sum_{u=2}^k \sum_{L\subset \{1,\ldots,k\}; |L|=u} \frac { (n-u)!}{n!}
\frac 1 {Q^{k-u}} \sum_{ \ui \in \I_L} \prod_{l
  \in L} (Z_{i_lq_l} -\pi_{q_l} ):=T_{2,1}+T_{2,2}. 
\end{multline*}
We now prove that the first term in the right hand side of this equality, namely $T_{2,1}$ is zero. This result relies on the following lemma, stating that the model is invariant under a permutation of the values of the node groups.

\begin{lemma}\label{lem:degeneracy}
  Under the assumptions and notations of Theorem~\ref{thm:main}, assuming moreover \eqref{eq:model_affil} and \eqref{eq:model_equalgroup}, for any $\sigma\in \mathcal{S}_{\Q}$ the set of permutations of $\Q$, we have   
  \begin{equation*}
 (\{Z_i\}_{1\le i \le n}, \{X_{ij}\}_{1\le i<j\le n} ) \mathop{=}^{d}  (\{\sigma(Z_i)\}_{1\le i \le n},\{X_{ij}\}_{1\le i<j\le n} ), 
  \end{equation*}
where $ \mathop{=}^{d} $ means equality in distribution. 
As a consequence, for any value $\uq \in \Q^k$, the conditional expectation $m_g(\uq)$ is constant along the orbit (induced by $\mathcal{S}_{\Q}$) of the point $\uq$, \emph{i.e.} the set  of values $\{m_g(\sigma(\uq)) ; \sigma \in \mathcal{S}_{\Q}\}$ is a singleton for any fixed $\uq\in\Q^k$.
\end{lemma}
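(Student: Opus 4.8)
The plan is to establish the distributional identity first and then derive the consequence on the conditional means $m_g(\uq)$ as an easy corollary. To prove the equality in distribution
\begin{equation*}
(\{Z_i\}_i, \{X_{ij}\}_{i<j}) \mathop{=}^d (\{\sigma(Z_i)\}_i, \{X_{ij}\}_{i<j}),
\end{equation*}
I would argue via the joint law of the hidden classes and the observations. Write the joint distribution as $\pr(\{Z_i\}_i = \{z_i\}_i)\,\pr(\{X_{ij}\}_{i<j} \mid \{Z_i\}_i = \{z_i\}_i)$. For the marginal of the $Z_i$'s: since the group proportions are equal ($\pi_q = 1/Q$ for all $q$, by \eqref{eq:model_equalgroup}), the multinomial distribution $\mathcal M(1,\boldsymbol\pi)$ is exchangeable under relabelling of the $Q$ coordinates, so $\{Z_i\}_i$ and $\{\sigma(Z_i)\}_i$ have the same law. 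For the conditional law: by \eqref{eq:model_gen} the edge variables are conditionally independent with $\pr(X_{ij}\mid Z_i, Z_j)$, and by the affiliation assumption \eqref{eq:model_affil} this conditional distribution depends on $(Z_i,Z_j)$ only through the indicator $1_{Z_i = Z_j}$. The key observation is that $1_{Z_i = Z_j} = 1_{\sigma(Z_i) = \sigma(Z_j)}$ for any permutation $\sigma$, hence $\pr(X_{ij}\mid Z_i,Z_j) = \pr(X_{ij}\mid \sigma(Z_i),\sigma(Z_j))$ for each pair, and the full conditional law of $\{X_{ij}\}_{i<j}$ is unchanged when $\{Z_i\}_i$ is replaced by $\{\sigma(Z_i)\}_i$. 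Combining the two observations gives the claimed identity in distribution of the pair.

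For the consequence on $m_g(\uq)$, fix $\uq = (q_1,\ldots,q_k) \in \Q^k$ and a permutation $\sigma \in \mathcal S_\Q$. By definition
\begin{equation*}
m_g(\uq) = \esp\Big(g(\bX^{(1,\ldots,k)}) \,\Big|\, \prod_{l=1}^k Z_{l q_l} = 1\Big),
\end{equation*}
where I have used that $\bX^{\ui}$ has a law not depending on which concrete $k$-tuple $\ui$ realises the given class pattern, so I may take $\ui = (1,\ldots,k)$. The event $\{\prod_{l=1}^k Z_{lq_l}=1\}$ is exactly $\{Z_l \text{ belongs to group } q_l \text{ for } l=1,\ldots,k\}$, which, under the map $\{Z_i\}_i \mapsto \{\sigma(Z_i)\}_i$, corresponds to the event $\{\prod_{l=1}^k Z_{l\,\sigma(q_l)} = 1\}$. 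Applying the distributional identity just proved to the random vector $(\{Z_i\}_i, \bX^{(1,\ldots,k)})$ — which is a measurable function of $(\{Z_i\}_i, \{X_{ij}\}_{i<j})$ that is equivariant in the appropriate sense — yields
\begin{equation*}
\esp\Big(g(\bX^{(1,\ldots,k)}) \,\Big|\, \prod_{l=1}^k Z_{l q_l}=1\Big) = \esp\Big(g(\bX^{(1,\ldots,k)}) \,\Big|\, \prod_{l=1}^k Z_{l\,\sigma(q_l)}=1\Big),
\end{equation*}
that is, $m_g(\uq) = m_g(\sigma(\uq))$. Since $\sigma$ was arbitrary, $m_g$ is constant on the orbit of $\uq$ under $\mathcal S_\Q$, so $\{m_g(\sigma(\uq)); \sigma\in\mathcal S_\Q\}$ is a singleton.

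The only point requiring mild care — and the place I would be most careful in the write-up — is the bookkeeping that justifies replacing the generic $k$-tuple $\ui$ by $(1,\ldots,k)$ and tracking how the conditioning event transforms under $\sigma$; this is purely a matter of matching indices and invoking the conditional i.i.d.\ structure of non-adjacent edges noted after \eqref{eq:model_gen}, rather than any real analytic difficulty. Everything else reduces to the two exchangeability facts (multinomial symmetry under equal proportions, and invariance of $1_{Z_i = Z_j}$ under $\sigma$), so the argument is short once these are isolated.
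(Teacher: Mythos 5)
Your proposal is correct and follows essentially the same route as the paper: the joint law factors as $\prod_i \pr(Z_i)\prod_{i<j}\pr(X_{ij}\mid 1_{Z_i=Z_j})$, the first factor is $Q^{-n}$ under equal proportions and the second is unchanged since $1_{Z_i=Z_j}=1_{\sigma(Z_i)=\sigma(Z_j)}$, and the constancy of $m_g$ on orbits then follows by rewriting $m_g(\sigma(\uq))$ as a conditional expectation given $(\sigma(Z_1),\ldots,\sigma(Z_k))=\sigma(\uq)$ exactly as you do. No gaps.
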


Indeed, according to \eqref{eq:model_gen},  \eqref{eq:model_affil} and \eqref{eq:model_equalgroup}, and using that any permutation $\sigma$ is a one-to-one application, we have
\begin{multline*}
  \pr(\{Z_i\}_{1\le i \le n}, \{X_{ij}\}_{1\le i<j\le n} )
= \prod_{i=1}^n \pr(Z_i) \prod_{1\le i<j\le n} \pr(X_{ij}|1_{Z_i=Z_j})\\
=\frac 1 {Q^n} \prod_{1\le i<j\le n} \pr(X_{ij}|1_{\sigma(Z_i)=\sigma(Z_j)})= \pr(\{\sigma(Z_i)\}_{1\le i \le n}, \{X_{ij}\}_{1\le i<j\le n} ).
\end{multline*}
As a consequence, for any $\sigma\in\mathcal{S}_{\Q}$ and any value $\uq\in\Q^k$, the conditional expectation $m_g(\sigma(\uq))$ satisfies
\begin{multline*}
m_g(\sigma(\uq))= \esp(g(\bX^{(1,\ldots,k)}) | (Z_1,\ldots,Z_k)=\sigma(\uq))\\
=\esp(g(\bX^{(1,\ldots,k)}) | (\sigma(Z_1),\ldots,\sigma(Z_k))=\sigma(\uq))
= m_g(\uq).
\end{multline*}
Thus  the set of values $\{m_g(\sigma(\uq)) ; \sigma \in \mathcal{S}_{\Q}\}$ is reduced to a singleton.
This finishes the proof of the lemma.\\

Now, going back to the term $T_{2,1}$, the set $\Q^k$ may be partitioned into the disjoint union of the orbits induced by $\mathcal{S}_{\Q}$, namely $\Q^k =\cup_{\mathcal{O} orbit}\mathcal{O}$, with $\uq \to m_g(\uq)$ being constant on each orbit $\mathcal{O}$. We let $m_{g,\mathcal{O}}$ denote the value of the function $\uq \to m_g(\uq)$ on the orbit $\mathcal{O}$. Then we write
\begin{equation*}
  T_{2,1}= \frac 1 {n Q^{k-1} }\sum_{\mathcal{O} orbit} m_{g,\mathcal{O}} \sum_{l=1}^k\sum_{i=1}^n \sum_{\uq\in \mathcal{O}}    (Z_{iq_l}-\pi_{q_l}).
\end{equation*}
For each orbit $\mathcal{O}$ and any position $l\in\{1,\ldots,k\}$, if we fix some $\uq \in \mathcal{O}$, then we argue that $\mathcal{O}$ contains all the points of the form $(q_1,\ldots,q_{l-1},j,q_{l+1},\ldots,q_k)$ for any $1\le j\le Q$. Indeed, all these points are images of $\uq$ by the simple transpositions $(q_l\; j)$. Thus, the sum $\sum_{\uq \in \mathcal{O}} (Z_{iq_l}-\pi_{q_l})$ contains the sum $\sum_{q_l \in \Q} (Z_{iq_l}-\pi_{q_l})$ which is zero. This proves that 
$T_{2,1}=0$ and thus 
\begin{multline*}
 n( \hat m_g-m_g) = n(T_1+T_{2,2})=   \sum_{\uq  \in \Q^k}    \frac{(n-k)!}{(n-1)!} N_{\uq}^{1/2} \frac1 {N_{\uq}^{1/2}} \sum_{\substack{\ui \in  \Ik \\\prod_{l=1}^k Z_{i_l q_l} }} (g(\bX^{\ui})-m_g(\uq)) \\
+ \frac 1 {Q^{k-2}} \sum_{q,\ell\in\Q, q\neq \ell} \frac { 1}{(n-1)}
\sum_{ 1\le i\neq j\le n} (Z_{iq} -\pi_{q} )(Z_{j\ell} -\pi_{\ell} ) +o(1),
\end{multline*}
where, as in the non degenerate case, we argued that the terms in $T_{2,2}$ involving sets $L$ with cardinality $u\ge 3$ are negligible. We then  obtain that for $k=2$, we have
\begin{equation*}
  n( \hat m_g-m_g) \leadsto_{n\to\infty} \frac 1 Q \sum_{q,\ell \in \Q} V_{q\ell}
+\sum_{q,\ell\in \Q, q\ne \ell}(W_qW_\ell + \frac 1 {Q^2}) ,
\end{equation*}
where for any $1\le q,\ell\le Q$, the random variables $V_{q\ell}$ are independent, with distribution $ \mathcal{N}(0,\Var(g(X_{12})|Z_{1q}Z_{2\ell}=1))$ and $W=(W_1,\ldots,W_Q)$ is independent from the $V_{q\ell}$'s, with distribution $ \mathcal{N}_Q(0,\Sigma)$, and in the equal group proportions case $\Sigma$ simplifies to  $\Sigma_{q\ell}=-1/Q^2$ when $q\neq \ell$ and $\Sigma_{qq}=(Q-1)/Q^2$. 

In the same way, whenever $k\ge 3$, all the terms appearing in $T_1$ are now negligible and we get 
\begin{equation*}
  n( \hat m_g-m_g) \leadsto_{n\to\infty} \sum_{q,\ell\in \Q, q\ne \ell}(W_qW_\ell + \frac 1 {Q^2}) .
\end{equation*}

\end{proof}

\begin{proof}[Proof of Theorem~\ref{thm:bin_moment}]
Following the proof of Theorem~\ref{thm:main}, we can easily write a joint central limit theorem for the triplet $(\hat m_1,\hat m_2,\hat m_3)$. Namely, 
\begin{equation*}
  \sqrt{n} 
\begin{pmatrix}
    \hat m_1 - m_1 \\ \hat m_2 -m_2 \\ \hat m_3 -m_3
  \end{pmatrix}
\mathop{\leadsto}_{n \to \infty} 
\mathcal{N}_3\left( 0 , V \right) ,
\end{equation*}
with some covariance matrix $V$. 
Thus, we can apply a delta-method \cite[see for instance][Chapter 3]{VDV}  to the  estimators  $\hat \beta = \phi(\hat m_1,\hat m_2,\hat m_3)$ and $\hat \alpha =\psi(\hat m_1,\hat m_2,\hat m_3)$ where the functions $\phi$ and $\psi$ are differentiable. This gives the convergence of the estimators $(\hat \alpha,\hat \beta)$ and guarantees the same rates of convergence for $\hat \alpha, \hat \beta$ than for the $\hat m_i$'s. 
\end{proof}

\begin{proof}[Proof of Theorem~\ref{thm:multiv_bernoulli}]
Following the classical proof of \cite{Wald} \citep[see also][]{VDV}, we may obtain the almost sure convergence  of $(\hat  {\boldsymbol  {\gamma}}_n, \hat  \alpha_n,\hat
\beta_n)$ to  the true value of the  parameter $(\boldsymbol {\gamma}^\star,
\alpha^\star,\beta^\star)$,  provided the  parameter space  is compact
and the three following assumptions are satisfied:
\begin{itemize}
\item [$i)$] Convergence of the criterion 
\begin{multline*}
\ell_n (\boldsymbol \pi,\alpha,\beta) :=  \frac{1}{n(n-1)(n-2)} \sum_{(i,j,k)\in \I_3 }\log  \pr_{\boldsymbol \pi,\alpha,\beta} (X_{ij},X_{ik},X_{jk}) \\
\mathop{\rightarrow}_{n\to \infty}  H ((\boldsymbol \pi,\alpha,\beta);
(\boldsymbol                                     \pi^\star,\alpha^\star
,\beta^\star))                                :=\mathbb{E}_{\boldsymbol
  \pi^\star,\alpha^\star,\beta^\star}       \log      \pr_{\boldsymbol
  \pi,\alpha,\beta} (X_{12},X_{13},X_{23}) ,
\end{multline*}
$ \mathbb{P}_{\boldsymbol \pi^\star,\alpha^\star,\beta^\star} $-almost
surely ; 
\item   [$ii)$]  Identification  of   the  parameter   $  (\boldsymbol
  \gamma,\alpha,\beta) $ 
  \begin{equation*}
   H ((\boldsymbol \pi,\alpha,\beta);
(\boldsymbol \pi^\star,\alpha^\star ,\beta^\star)) \leq  H ((\boldsymbol \pi^\star,\alpha^\star,\beta^\star);
(\boldsymbol \pi^\star,\alpha^\star ,\beta^\star)) , 
  \end{equation*}
 with equality if and only if $ (\boldsymbol \gamma,\alpha,\beta) = 
(\boldsymbol   \gamma^\star,\alpha^\star    ,\beta^\star)   $,   where
$\boldsymbol \gamma$ and $\boldsymbol \pi$ are related through \eqref{eq:multiBernAffil}; 
\item [$iii)$] Uniform equicontinuity of the family of functions $ (\boldsymbol
    \pi,\alpha,\beta) \to \ell_n   (\boldsymbol
    \pi,\alpha,\beta)$.  Namely, for any  $\epsilon >0$,  there exists
    some $\nu>0$ such that for all $n\ge 1$ and as soon as $\| (\boldsymbol
    \pi,\alpha,\beta)-   (\boldsymbol
    \pi',\alpha',\beta') \|_{\infty} \le \nu$, we have $|\ell_n (\boldsymbol
    \pi,\alpha,\beta)-\ell_n   (\boldsymbol
    \pi',\alpha',\beta')| \le \epsilon$.
\end{itemize}
Item  $i)$ follows  from Theorem~\ref{thm:main},  while  $ii)$ follows
from      Jensen's     inequality     and     identifiability of the parameters, \emph{i.e.} Assumption~\ref{hyp:ident_binary}.
Let us know  establish $iii)$. We fix for the  moment some $\nu >0$
and consider $\eta =(\boldsymbol
 \pi,\alpha,\beta)$ and $\eta' =(\boldsymbol \pi',\alpha',\beta')$ such
 that $\|\eta-\eta'\|_{\infty}\le \nu$. 
We recall that $(X_{ij},X_{ik},X_{jk})=\bX^{(i,j,k)}$. We then write 
 \begin{multline*}
   |       \log       \mathbb{P}_{\eta}      (Z_{iq}Z_{j\ell}Z_{km}=1,
 \bX^{(i,j,k)} )-\log  \mathbb{P}_{\eta'}      (Z_{iq}Z_{j\ell}Z_{km}=1,
   \bX^{(i,j,k)}) | \\
\le |\log  \pi_q -\log \pi'_q| + |\log \pi_\ell
   -\log \pi'_\ell| + |\log \pi_m -\log \pi'_m| \\
+  |       \log       \mathbb{P}_{\eta}      (
   \bX^{(i,j,k)}|Z_{iq}Z_{j\ell}Z_{km}=1)-\log  \mathbb{P}_{\eta'}      (
   \bX^{(i,j,k)}|Z_{iq}Z_{j\ell}Z_{km}=1) |. 
 \end{multline*}
The  second term  in the  right hand  side of  this inequality  may be
bounded as follows 
\begin{multline*}
   |       \log       \mathbb{P}_{\eta}      (
 \bX^{(i,j,k)} |Z_{iq}Z_{j\ell}Z_{km}=1)-\log  \mathbb{P}_{\eta'}      (
  \bX^{(i,j,k)}|Z_{iq}Z_{j\ell}Z_{km}=1)    |   \\ \le   3\max(|\log
   \alpha-\log \alpha '|, |\log (1-\alpha)-\log (1-\alpha ')|, |\log
   \beta-\log \beta '|,\\
 |\log (1-\beta)-\log (1-\beta ')|). 
\end{multline*}
We now  make use of the fact  that we restricted our  attention to the
parameter space  $\Pi_\delta$, in which  all the parameters  are lower
bounded by $\delta$ (Assumption \ref{hyp:compact1}). Moreover, for any
$x,y  >0$, we  may use  $|\log  x -\log  y|\le |x-y|/\min(x,y)$.  This
finally leads to 
\begin{equation*}
   |       \log       \mathbb{P}_{\eta}      (Z_{iq}Z_{j\ell}Z_{km}=1,
\bX^{(i,j,k)})-\log  \mathbb{P}_{\eta'}      (Z_{iq}Z_{j\ell}Z_{km}=1,
\bX^{(i,j,k)} ) | 
\le 6\delta^{-1}\nu.
 \end{equation*}
Now, we obtain
\begin{multline*}
   \mathbb{P}_{\eta}    ( \bX^{(i,j,k)})    =    \sum_{q\ell   m}
   \mathbb{P}_{\eta} (Z_{iq}Z_{j\ell}Z_{km}=1, \bX^{(i,j,k)}) \\ \le
   \exp(6\delta^{-1}\nu    )    \sum_{q\ell   m}    \mathbb{P}_{\eta'}
   (Z_{iq}Z_{j\ell}Z_{km}=1,\bX^{(i,j,k)}) = \exp(6\delta^{-1}\nu    )     \mathbb{P}_{\eta'}   (\bX^{(i,j,k)}) ,
\end{multline*}
and thus 
\begin{equation*}
 \log \mathbb{P}_{\eta} (\bX^{(i,j,k)}) \le
  \frac {6\nu} {\delta}+  \log \mathbb{P}_{\eta'}
   (\bX^{(i,j,k)}) .
\end{equation*}
As this inequality is symmetric with respect to $\eta$ and $\eta'$, we
further obtain 
\begin{equation*}
 | \log \mathbb{P}_{\eta} ( \bX^{(i,j,k)}) - \log \mathbb{P}_{\eta'}
   (\bX^{(i,j,k)}) |\le
  \frac {6\nu} {\delta}. 
\end{equation*}
Finally, 
\begin{equation*}
|   \ell_n   (\eta)-\ell_n   (\eta')   |   \le   \frac{1}{n(n-1)(n-2)}
\sum_{i,j,k} | \log \mathbb{P}_{\eta} (\bX^{(i,j,k)} ) - \log \mathbb{P}_{\eta'}
   (\bX^{(i,j,k)}) |\le
  \frac {6\nu} {\delta}, 
\end{equation*}
which establishes $iii)$.\\ 

To further obtain the rates of convergence  of the estimators, one usually proceeds to a Taylor expansion of the derivative $\partial \ell_n(\boldsymbol \pi^\star,\alpha^\star,\beta^\star)$ in a vicinity of the estimator $(\hat {\boldsymbol \pi}_n,\hat \alpha_n, \hat \beta_n)$.
Let us write
\begin{equation*}
  0=\partial \ell_n (\hat {\boldsymbol \pi}_n,\hat \alpha_n, \hat \beta_n) = \partial \ell_n (\boldsymbol \pi^\star,\alpha^\star,\beta^\star) 
+[ (\hat {\boldsymbol \pi}_n,\hat \alpha_n, \hat \beta_n)-(\boldsymbol \pi^\star,\alpha^\star,\beta^\star)] \partial^2 \ell_n (\tilde {\boldsymbol \pi}_n,\tilde \alpha_n, \tilde \beta_n) ,  
\end{equation*}
where $ (\tilde {\boldsymbol \pi}_n,\tilde \alpha_n, \tilde \beta_n) $
is some point between $(\hat {\boldsymbol \pi}_n,\hat \alpha_n, \hat \beta_n)$ and $(\boldsymbol \pi^\star,\alpha^\star,\beta^\star)$.
Applying  Theorem~\ref{thm:main}  to  the  quantity  $\partial  \ell_n
(\boldsymbol  \pi^\star,\alpha^\star,\beta^\star)  $,  we  obtain  its
almost      sure      convergence     to      $\mathbb{E}_{\boldsymbol
  \pi^\star,\alpha^\star,\beta^\star}( \partial \log $ $\pr_{\boldsymbol
  \pi^\star,\alpha^\star,\beta^\star} (X_{12},X_{13},X_{23}) ) =0$, as well as the asymptotic normality 
\begin{equation*}
  \sqrt{n} \partial \ell_n (\boldsymbol \pi^\star,\alpha^\star,\beta^\star) \leadsto_{n \to \infty} \mathcal{N}(0, J) .
\end{equation*}
Now, at  a fixed point $(\boldsymbol \pi,\alpha,\beta)  $, the Hessian
matrix $\partial^2  \ell_n (\boldsymbol \pi,\alpha,\beta)  $ converges
from      Theorem~\ref{thm:main}      to      $\mathbb{E}_{\boldsymbol
  \pi^\star,\alpha^\star,\beta^\star}         (\partial^2         \log
\pr_{\boldsymbol                                      \pi,\alpha,\beta}
(X_{12},X_{13},X_{23})  )$. Combining the  almost sure  convergence of
$(\hat   {\boldsymbol   \pi}_n,\hat   \alpha_n,  \hat   \beta_n)$   to
$(\boldsymbol   \pi^\star,\alpha^\star,\beta^\star)$,   with   uniform
equicontinuity    of   the    family   of    functions   $(\boldsymbol
\pi,\alpha,\beta) \to \partial^2 \ell_n (\boldsymbol \pi,\alpha,\beta)
$  (the  proof is  similar  to point  $iii)$  above  and is  therefore
omitted), we obtain the almost sure convergence 
\begin{equation*}
  \partial^2  \ell_n   (\tilde  {\boldsymbol  \pi}_n,\tilde  \alpha_n,
  \tilde \beta_n) \to_{n \to \infty} \mathbb{E}_{\boldsymbol
  \pi^\star,\alpha^\star,\beta^\star}         (\partial^2         \log
\pr_{\boldsymbol                    \pi^\star,\alpha^\star,\beta^\star}
(X_{12},X_{13},X_{23})           )         :=      -    K .
\end{equation*}
If  the Fisher information matrix $ K$ is invertible, we  obtain
\begin{equation*}
\sqrt{n}   [    (\hat   {\boldsymbol   \pi}_n,\hat    \alpha_n,   \hat
\beta_n)-(\boldsymbol \pi^\star,\alpha^\star,\beta^\star)] \leadsto_{n
  \to          \infty}          \mathcal{N}(0,  K ^{-1} J K ^{-1}). 
\end{equation*}
In this case, the inverse of the limiting variance is known as Godambe information \citep{Varin08}. Its form is due to the fact that $K^{-1}\neq J$ in general, resulting in a loss of efficiency of the estimators. 
In case where $ K $ is not invertible, or when $ J =0$, the rate of convergence of the estimators is faster than $1/\sqrt{n}$. In particular, when the group proportions are equal, we know from Theorem~\ref{thm:main} that   $n \partial \ell_n (\boldsymbol \pi^\star,\alpha^\star,\beta^\star)$ converges in distribution and then the rate of convergence of $ (\hat   {\boldsymbol   \pi}_n,\hat    \alpha_n,   \hat
\beta_n)$ is at least $1/n$.
\end{proof}

\begin{proof}[Proof of Theorem~\ref{thm:cv_weighted}]
The   proof   follows  the   scheme   described   in   the  proof   of
Theorem~\ref{thm:multiv_bernoulli}.         We        denote        by
$(\boldsymbol{\pi^\star}, \mathbf{p^\star}, \boldsymbol {\theta^\star})$
the  true  value  of  the  parameter  and  by  $\mathbb{P}^\star$  and
$\mathbb{E}^\star$ the corresponding probability and expectation. 
First,   let  us   establish   the  consistency   of  the   normalized
composite likelihood (point $i)$). According to Theorem~\ref{thm:main}, we have for any fixed value of $(\boldsymbol \pi, \mathbf p, \boldsymbol \theta)$,
\begin{multline*} 
 \frac{2}{n(n-1)}   \sum_{1\le  i<j\le   n}  1_{X_{ij}\neq   0}  \log
  \mathbb{P}_{\boldsymbol{\pi}, \mathbf{p}, \boldsymbol \theta}(X_{ij}) \mathop{\longrightarrow}_{n\to +\infty} \mathbb{E}^\star(1_{X_{12}\neq 0} \log
  \mathbb{P}_{\boldsymbol{\pi},    \mathbf{p},   \boldsymbol   \theta}
  (X_{12})) \\
:=   H(   (\boldsymbol{\pi},   \mathbf{p},   \boldsymbol   \theta)   ;
(\boldsymbol{\pi^\star}, \mathbf{p^\star}, \boldsymbol \theta^\star)),
\quad \mathbb{P}^\star \text{ a.s.}
\end{multline*}
Here, we need to deal with the fact that we use a random value for $\mathbf p$ (a preliminary step estimate) in the definition of $\hat {\boldsymbol \theta}$. It is thus necessary to prove that this convergence happens uniformly with respect to $\mathbf p$. But this is going to be a consequence of point $iii)$ below. 
Combining this with the almost sure convergence of $\hat {\mathbf p}_n$  to the true value $\mathbf{p^\star}$ (this is either a consequence of Theorem~\ref{thm:main} when $\mathbf p=p$ is constant, or a consequence of Sections~\ref{sec:bin_poly} and \ref{sec:bin_multidim} when $\mathbf p=(\alpha,\beta)$), we get 
\begin{equation*}
  \frac{2}{n(n-1)} \mathcal{L}_{X}^{\text{compo}}(\boldsymbol \pi, \hat{\mathbf p}_n, \boldsymbol \theta)  
 \mathop{\longrightarrow}_{n\to +\infty}   H(   (\boldsymbol{\pi},   \mathbf{p},   \boldsymbol   \theta)   ;
(\boldsymbol{\pi^\star}, \mathbf{p^\star}, \boldsymbol \theta^\star)),
\quad \mathbb{P}^\star \text{ a.s.}
\end{equation*}
Moreover, we assumed that $f(\cdot, \theta)$ has a continuous c.d.f. and the distribution of a present edge is given by \eqref{eq:weighted_mixture}, so that we have 
\begin{multline*}
  H(   (\boldsymbol{\pi},   \mathbf{p},   \boldsymbol   \theta)   ;
(\boldsymbol{\pi^\star},  \mathbf{p^\star}, \boldsymbol \theta^\star))
=    \int_{x}    \log(\gamma_{\text{in}}    f(x;\theta_{\text{in}})    +\gamma_{\text{out}}
f(x;\theta_{\text{out}}))\\
 \times (\gamma_{\text{in}}^\star    f(x;\theta_{\text{in}}^\star)    +\gamma_{\text{out}}^\star
f(x;\theta_{\text{out}}^\star)) dx, 
\end{multline*}
where $(\gamma_{\text{in}},\gamma_{\text{out}})$ as well as $(\gamma_{\text{in}}^\star,\gamma_{\text{out}}^\star)$ are defined through $(\boldsymbol{\pi},\mathbf{p})$ and $(\boldsymbol{\pi^\star},\mathbf{p^\star})$ respectively. Thus, the difference 
\begin{equation*}
   H(   (\boldsymbol{\pi^\star},   \mathbf{p^\star},   \boldsymbol   \theta^\star)   ;
(\boldsymbol{\pi^\star},  \mathbf{p^\star}, \boldsymbol \theta^\star))
- H(   (\boldsymbol{\pi},   \mathbf{p},   \boldsymbol   \theta)   ;
(\boldsymbol{\pi^\star}, \mathbf{p^\star}, \boldsymbol \theta^\star))
\end{equation*}
 is a Kullback-Leibler divergence between two mixture distributions of the form  \eqref{eq:weighted_mixture}. This entails positivity of this difference. Moreover, Assumption~\ref{hyp:ident} ensures that the difference is zero if and only if 
 \begin{equation*}
   \gamma_{\text{in}}\delta_{\theta_{\text{in}}} +\gamma_{\text{out}}\delta_{\theta_{\text{out}}}
=    \gamma_{\text{in}}^\star\delta_{\theta_{\text{in}}^\star} +\gamma_{\text{out}}^\star\delta_{\theta_{\text{out}}^\star},
 \end{equation*}
which establishes point $ii)$, up to a permutation on the label parameters $\{\text{in, out}\}$. 
Finally, the proof of point $iii)$ follows the same lines as in the proof of Theorem~\ref{thm:multiv_bernoulli}, and uses the continuity of the map $\theta \mapsto f(\cdot, \theta)$, which is a consequence of Assumption~\ref{hyp:regular}.

To further obtain the rates of convergence of our estimators,  we proceed exactly as we did in the proof of Theorem~\ref{thm:multiv_bernoulli}.
\end{proof}

\paragraph*{Acknowledgments} The authors thank Jérôme Dedecker for  helpful insights concerning this work as well as the associate editor and an anonymous referee for their remarks leading to considerable improvements on the manuscript.
 The authors have been supported  by the French Agence Nationale de la Recherche  under grant NeMo ANR-08-BLAN-0304-01.

\bibliographystyle{plainnat}
\bibliography{mixnet}

\end{document}